\theoremstyle{plain}                    
\newtheorem{teo}{Theorem}[section]      
\newtheorem{theoremalpha}{Theorem}
\newtheorem{prop}[teo]{Proposition}
\newtheorem{cor}[teo]{Corollary}       
\newtheorem{lem}[teo]{Lemma}            
\theoremstyle{definition}               
\newtheorem{defin}[teo]{Definition}
\theoremstyle{remark}
\newtheorem{rmk}[teo]{Remark}
\newenvironment{dimo}
         {\textit{Proof of}}
     {\hspace*{\fill}\hspace*{\fill}\mbox{$\Box$}}
\theoremstyle{plain}                    
\newtheorem{cora}[theoremalpha]{Corollary}       
\theoremstyle{definition}               
\theoremstyle{remark}
\numberwithin{equation}{section}
\newcommand{\oo}{\mathcal{O}}
\newcommand{\mt}{\mathcal}
\newcommand{\Vc}{\mathcal V ec^d_{r,g,n}}
\newcommand{\Jc}{\mathcal J ac^d_{g,n}}
\newcommand{\Vr}{\mathcal V^d_{r,g,n}}
\newcommand{\Ex}{\mt Ext^d_{r,g,n}}
\newcommand{\Sym}{\text{Sym}^d\mathcal C_{g,n}}
\newcommand{\Ci}{\mathcal C_{g,n}}
\newcommand{\Cid}{\mathcal C^d_{g,n}}
\newcommand{\nud}{\nu^d_{r,g,n}}
\newcommand{\Mg}{\mathcal M_{g,n}}
\newcommand{\U}{V^d_{r,g,n}}
\newcommand{\Us}{U^{d}_{r,g,n}}
\newcommand{\Pic}{\text{Pic}}
\newcommand{\Br}{\text{Br}}
\begin{document}
\title{The Brauer group of the universal moduli space of vector bundles over smooth curves.}
\author{Roberto Fringuelli and Roberto Pirisi}

\date{}
\maketitle

\begin{abstract}We compute the Brauer group of the universal moduli stack of vector bundles on (possibly marked) smooth curves of genus at least three over the complex numbers. As consequence, we obtain an explicit description of the Brauer group of the smooth locus of the associated moduli space of semistable vector bundles, when the genus is at least four.
\end{abstract}
\tableofcontents 
\section*{Introduction}
The Brauer group of a variety is an important invariant. When the variety is proper and normal, the existence of non-trivial elements in the Brauer group is an obstruction to stable rationality, and it has been used to construct examples of non-rational varieties by many, including Saltman, Colliot-Th\'el\`ene, Mumford, Peyre to cite a few. In the case of moduli space of sheaves is often related to the existence of the universal family. Indeed, for the moduli space $U_C(r,L)$ of rank $r$ stable vector bundles with fixed determinant over a smooth complex curve $C$, the existence of a universal vector bundle over $U_C(r,L)\times C$ is equivalent to the Brauer group of $U_C(r,L)$ being trivial, and in \cite{BBGN}, Balaji, Biswas, Gabber and Nagaraj proved when the genus of $C$ is at least three the Brauer group of  is isomorphic to $\mathbb Z/\operatorname{gcd}(r,d)\mathbb Z$. Furthermore, it is generated by the Brauer class of the projective bundle obtained by restricting the universal projective bundle over $U_C(r,L)\times C$ to $U_{C}(r,L)\times\{ p\}$, for $p$ point in $C$.\\

This work comes from the aim to solve the same problem in the case when the curve $C$ is free to move. More precisely, there exists a complex variety $\U$ such that its points are in bijection with the so-called aut-equivalence classes of objects $(C,x_1,\ldots,x_n,E)$, where $C$ is a smooth curve of genus $g$ with $x_1,\ldots,x_n\in C$ distinct points and $E$ a vector bundle of degree $d$ and rank $r$. It is an irreducible normal quasi-projective variety. We will call $\U$ the \emph{universal moduli space of semistable vector bundles over $M_{g,n}$}. 
\begin{enumerate}[(1)]
	\item The open subset $\Us$ of smooth points in $\U$ parametrizes the isomorphism classes of regularly stable objects (see Definition \ref{regst}).
\end{enumerate}
 We explicitly compute the Brauer group of this variety.

\begin{theoremalpha}\label{teocoarse} Let $d,r,g,n\in\mathbb Z$, such that $r\geq 1$, $g\geq 4$ and $n\geq 0$. We have the following isomorphism of groups
	$$\Br\left(\Us\right)=\begin{cases}
	\mathbb Z/\operatorname{gcd}(d+r(1-g),r( d+1-g), r(2g-2))\mathbb Z, &\text{ if }n=0,\\
	\mathbb Z/\operatorname{gcd}(r,d)\mathbb Z, &\text{ if }n>0.
	\end{cases}$$
Moreover, for $k$ big enough the generator is given by the Brauer class of the projective bundle $\mathbb P(k)\to\Us$, where the fibre over a point $[(C,x_1,\ldots,x_n,E)]\in\Us$ is canonically isomorphic to $\mathbb P (H^0(C,E\otimes \omega_C^k))$. If $d\geq 2r(g-1)$, we can choose $k=0$. 
\end{theoremalpha}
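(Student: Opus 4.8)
The plan is to exploit that $\Us$ is the $\mathbb{G}_m$-rigidification of the open substack $\mathcal{V}^{rs}\subseteq\Vc$ of regularly stable triples (Definition~\ref{regst}): the scalar automorphisms of a stable bundle form a copy of $\mathbb{G}_m$ sitting in every automorphism group and acting trivially on moduli, so rigidifying turns $\mathcal{V}^{rs}$ into a $\mathbb{G}_m$-gerbe $\pi\colon\mathcal{V}^{rs}\to\Us$. Let $\gamma=[\pi]\in\Br(\Us)$ be its class, which is exactly the obstruction to the existence of a universal vector bundle on $\Ci\times_{\Mg}\Us$. I would read off $\Br(\Us)$ from the Leray spectral sequence of $\pi$ for $\mathbb{G}_m$. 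One has $\pi_*\mathbb{G}_m=\mathbb{G}_m$, $R^1\pi_*\mathbb{G}_m=\underline{\mathbb{Z}}$ (the weight under the banding), $R^2\pi_*\mathbb{G}_m=0$; combining these with $H^1_{et}(\Us,\underline{\mathbb{Z}})=0$ (normality of $\Us$) and with the fact that $H^2_{et}(-,\mathbb{G}_m)$ is torsion and equals the Brauer group on the regular finite-type $\mathbb{C}$-stacks at hand, the low-degree terms give the exact sequence
$$0\longrightarrow\Pic(\Us)\longrightarrow\Pic(\mathcal{V}^{rs})\stackrel{w}{\longrightarrow}\mathbb{Z}\stackrel{\partial}{\longrightarrow}\Br(\Us)\stackrel{\pi^*}{\longrightarrow}\Br(\mathcal{V}^{rs})\longrightarrow0,\qquad\partial(1)=\gamma.$$
Hence $\Br(\Us)/\langle\gamma\rangle\cong\Br(\mathcal{V}^{rs})$ and $\operatorname{ord}(\gamma)=[\mathbb{Z}:\operatorname{im}w]$ is the least positive weight of a line bundle on $\mathcal{V}^{rs}$.

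Both invariants I would move to the full stack $\Vc$. A codimension count shows that the non-regularly-stable locus in $\Vc$ has codimension $\geq 2$ precisely when $g\geq 4$ --- the worst stratum being the hyperelliptic locus (codimension $g-2$) when $n=0$ --- so that purity for the Brauer group (Gabber) and for the Picard group gives $\Br(\mathcal{V}^{rs})\cong\Br(\Vc)$ and $\Pic(\mathcal{V}^{rs})\cong\Pic(\Vc)$; this is where the genus hypothesis enters. I would then invoke the paper's main theorem on the stack, which in particular yields $\Br(\Vc)=0$ --- proved by computing the low-degree étale (equivalently, over $\mathbb{C}$, Betti) cohomology of $\Vc$ with $\mu_\ell$- and $\mathbb{G}_m$-coefficients through the fibration $\Vc\to\Mg$, whose fibres $\operatorname{Bun}^d_r(C)$ have cohomology governed by the Atiyah--Bott tautological generators, together with the known low-degree cohomology of $\Mg$ --- and the known presentation of $\Pic(\Vc)$ by tautological classes, which identifies $\operatorname{im}w$ precisely.

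For the explicit lower-index side, let $\mathcal{E}$ be the universal bundle on the universal curve $q\colon\mathcal{C}\to\Vc$ and $\omega$ the relative dualizing sheaf; the banding $\mathbb{G}_m$ acts on $\mathcal{E}$ with weight $1$ and on $\det\mathcal{E}$ with weight $r$. By Riemann--Roch the tautological line bundles $\det Rq_*(\mathcal{E}\otimes\omega^{k})$ and $\det Rq_*(\det\mathcal{E}\otimes\omega^{k})$ on $\Vc$ have weights $d+r(1-g)+rk(2g-2)$ and $r(d+1-g)+rk(2g-2)$ respectively, so letting $k$ vary we get that $\operatorname{im}w$ contains $N\mathbb{Z}$ with $N=\gcd\big(d+r(1-g),\,r(d+1-g),\,r(2g-2)\big)$; when $n>0$ the extra line bundle $\det(\sigma_1^*\mathcal{E})$ has weight $r$, sharpening this to $\gcd(N,r)=\gcd(r,d)$. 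Combined with the description of $\Pic(\Vc)$ (which shows these exhaust $\operatorname{im}w$) and with $\Br(\Vc)=0$, the exact sequence gives $\Br(\Us)=\langle\gamma\rangle\cong\mathbb{Z}/N\mathbb{Z}$ when $n=0$ and $\cong\mathbb{Z}/\gcd(r,d)\mathbb{Z}$ when $n>0$. Finally, $\mathbb{P}(k)$ is the descent along $\pi$ of the weight-$1$ sheaf $q_*(\mathcal{E}\otimes\omega^{k})$ on $\mathcal{V}^{rs}$, hence its Brauer class is $\gamma$ and it is a generator; this sheaf is a genuine vector bundle --- so $\mathbb{P}(k)$ is an honest projective bundle --- as soon as $R^1q_*$ vanishes, i.e. for $k\gg0$, and already for $k=0$ when $d\geq 2r(g-1)$.

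The main obstacle is the stack computation feeding into the second paragraph: proving $\Br(\Vc)=0$ requires controlling the $2$- and $3$-torsion in the cohomology of $\Vc$ tightly enough --- in particular excluding exotic torsion in $H^3(\Vc,\mathbb{Z})$ and in the transcendental part of $H^2$ --- and establishing the codimension-$\geq2$ estimate uniformly in $r,d,n$; once these are in hand the rest is spectral-sequence bookkeeping and Riemann--Roch.
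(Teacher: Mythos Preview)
Your overall framework --- the Leray spectral sequence for the $\mathbb{G}_m$-gerbe, the weight-map computation via Riemann--Roch on tautological determinant bundles, and the identification of $\mathbb{P}(k)$ as the descent of a weight-$1$ sheaf --- is exactly the paper's approach (Theorem~\ref{kern}, Proposition~\ref{generatorproj}); the paper runs the sequence on the rigidification $\Vr$ rather than on $\Us$ directly and then uses $\Br'(\Vr)\cong\Br'(\Us)$ (Theorem~\ref{stack=coarse}), but this is cosmetic. Where you genuinely diverge is in the crucial input $\Br(\Vc)=0$.

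The paper does \emph{not} prove this via Atiyah--Bott or any direct cohomological computation of the fibres $\mathrm{Bun}^d_r(C)$. It proceeds instead through a chain of algebraic reductions: first $\Br'(\Vc)\hookrightarrow\Br'(\Jc)$ via an auxiliary stack of extensions $0\to\mathcal{O}^{r-1}\to E\to L\to 0$ (Proposition~\ref{br=br}); then $\Br'(\Jc)\cong\Br'(\Sym)$ via the universal Abel--Jacobi map (Corollary~\ref{sym=jac}); then $\Br'(\Sym)\hookrightarrow\Br'(\mathcal{M}_{g,n+d})$ by a comparison of the stacky and schematic $S_d$-quotients that ultimately rests on Macdonald's description of $H^*(\mathrm{Sym}^dX,\mathbb{Z})$ (Theorem~\ref{brsym}); and finally $\Br'(\mathcal{M}_{g,n+d})=0$ from the known integral homology of mapping class groups (Theorem~\ref{cohmg}). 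The authors explicitly discuss a topological route close in spirit to yours in Remark~\ref{topol} and note that it leaves gaps at small genus; they abandon it in favour of the algebraic chain. Your Atiyah--Bott sketch is plausible in outline, but controlling the integral torsion through the Leray spectral sequence of $\Vc\to\Mg$ is precisely the obstacle you yourself flag, and neither your proposal nor the paper carries that out.

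One correction: you misplace where $g\geq 4$ enters. The codimension-$\geq 2$ estimate for the non-regularly-stable locus holds already for $g\geq 3$ (Theorem~\ref{teo1} and Lemma~\ref{cane}); the hyperelliptic stratum is not the bottleneck there. The hypothesis $g\geq 4$ is used only at the final step, for the vanishing $\Br'(\mathcal{M}_{g,n+d})=0$, which depends on the Korkmaz--Stipsicz computation of $H_2$ of mapping class groups.
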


The description of the generator does not appear to be \emph{a priori} compatible with the description given in \cite{BBGN}. We made this choice to have a uniform description of the generator, independent from $n$. If $n>0$, for any $1\leq i\leq n$, the generator can be given by the Brauer class of the projective bundle $\mathbb P_i\to\Us$ with fibre over $[(C,x_1,\ldots,x_n,E)]$ canonically isomorphic to $\mathbb P (E_{x_i})$, where $E_{x_i}$ is the fibre of the vector bundle $E\to C$ over the point $x_i$. This representative is compatible with the one given in \emph{loc. cit}.\\

The theorem is a consequence of a more general statement. Roughly speaking, our main result describes the Brauer groups of the moduli stacks associated to $\U$.  Before stating it, we need to introduce the objects of our study. 
\begin{enumerate}[(1)]\setcounter{enumi}{1}
	
\item The \emph{universal moduli stack} $\Vc$ of (not necessarily semistable) vector bundles of rank $r$ and degree $d$ over $\Mg$. It parametrizes the objects $(C,x_1,\ldots,x_n,E)$, where $C$ is a smooth complex curve of genus $g$ with $x_1,\ldots,x_n$ distinct points and a vector bundle $E$ of degree $d$ and rank $r$.
\item The \emph{rigidified universal moduli stack $\Vr$} of vector bundles of rank $r$ and degree $d$ over $\Mg$. It can be obtained from $\Vc$ as it follows: the group $\mathbb G_m$ naturally injects into the automorphism group of every object in $\Vc$ as multiplication by scalar on the vector bundle. These automorphisms can be removed using a procedure called $\mathbb G_m$-rigidification. The output is the new stack $\Vr$.
\item The \emph{universal $d$-th symmetric product }$\Sym$. It parametrizes the objects $(C,x_1,\ldots,x_n,D)$, where $C$ is a smooth complex curve of genus $g$ with $x_1,\ldots,x_n$ distinct points and an effective divisor $D$ of degree $d$.
\end{enumerate} 
All the stacks above are algebraic, irreducible and smooth. There exists a canonical map $\nud: \Vc\to \Vr$, which is the identity along the objects. The stack $\Vc$ together with $\nud$ becomes a $\mathbb G_m$-gerbe. In particular, the map $\nud$ defines a class $[\nud]$ in $H^2(\Vr,\mathbb G_m)$.\\

Our main result is a description of the relationships between the Brauer groups of the moduli spaces (1), (2), (3) and (4).
\begin{theoremalpha}\label{teoa}Let $d,r,g,n\in\mathbb Z$, such that $r\geq 1$, $g\geq 3$ and $n\geq 0$. We have the following
\begin{enumerate}[(i)]
\item The order of the Brauer class of the gerbe $\nud:\Vc\to \Vr$ is
$$
\begin{cases}
\operatorname{gcd}(d+r(1-g),r( d+1-g), r(2g-2)), &\text{if }n=0,\\
\operatorname{gcd}(r,d), &\text{if }n>0.
\end{cases}
$$
\item The $\mathbb G_m$-gerbe $\nud:\Vc\to \Vr$ induces an exact sequence of groups
$$
\mathbb Z \cdot [\nud]\to \Br(\Vr)\to \Br(\Vc).
$$
\item There is a canonical isomorphism $\Br(\Us)\cong\Br(\Vr)$ of Brauer groups.
\item For $d$ big enough, there exists an inclusion  $\Br(\Vc)\subset\Br(\text{Sym}^dC_{g,n})$ of Brauer groups.
\item There is a canonical inclusion $\Br(\Sym)\subset\Br(\mt M_{g,n+d})$ of Brauer groups.

\end{enumerate}
\end{theoremalpha}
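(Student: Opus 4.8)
The plan is to compare the two stacks through the forgetful morphism that turns the last $d$ points of an ordered $(n+d)$-marking into an effective divisor of degree $d$. Let $\phi\colon\mt M_{g,n+d}\to\Sym$ send $(C,x_1,\dots,x_n,y_1,\dots,y_d)$ to $(C,x_1,\dots,x_n,\,y_1+\dots+y_d)$. Its image is the open substack $U\subset\Sym$ of divisors that are reduced and disjoint from the markings; since $S_d$ acts freely on orderings of such divisors, $\phi$ factors as a representable finite \'etale $S_d$-Galois cover $\mt M_{g,n+d}\to U$ followed by the open immersion $U\hookrightarrow\Sym$. The complement $\Sym\setminus U$ is a union of divisors, including the discriminant $\Delta$ of non-reduced divisors, which is irreducible and smooth at its generic point (\'etale-locally the classical discriminant hypersurface of a symmetric product). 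Because $\Sym$ is smooth, hence regular, $\Br(\Sym)$ injects into the Brauer group of the common function field of $\Sym$ and $U$; this gives $\Br(\Sym)\hookrightarrow\Br(U)$, and by purity the image is precisely the subgroup of classes unramified along every component of $\Sym\setminus U$.

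Next I would compute $\ker\!\big(\phi^{\ast}\colon\Br(U)\to\Br(\mt M_{g,n+d})\big)$ via the Hochschild--Serre spectral sequence $H^p\!\big(S_d,H^q_{\mathrm{et}}(\mt M_{g,n+d},\mathbb G_m)\big)\Rightarrow H^{p+q}_{\mathrm{et}}(U,\mathbb G_m)$ of the $S_d$-torsor. The kernel is an extension built from $H^1\!\big(S_d,\Pic\mt M_{g,n+d}\big)$ and $H^2\!\big(S_d,\mathcal O^{\ast}(\mt M_{g,n+d})\big)$. For $g\ge 3$ one has $\mathcal O^{\ast}(\mt M_{g,n+d})=\mathbb C^{\ast}$ (no non-constant units), and $\Pic\mt M_{g,n+d}$ is freely generated by the Hodge class and the $\psi$-classes, so as an $S_d$-module it is a sum of trivial modules and the permutation module $\mathbb Z[\{1,\dots,d\}]$, whose first cohomology vanishes by Shapiro's lemma. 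Hence $\ker\phi^{\ast}$ is a subquotient of $H^2(S_d,\mathbb C^{\ast})$, the Schur multiplier of $S_d$: it is trivial for $d\le 3$ (so (v) is immediate in that range), and $\mathbb Z/2$ for $d\ge 4$; when it is nontrivial its generator $\alpha\in\Br(U)$ is the pullback of the Schur class along the classifying map $U\to BS_d$.

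The remaining, and main, point is to show $\alpha\notin\operatorname{im}\big(\Br(\Sym)\hookrightarrow\Br(U)\big)$, i.e.\ that $\alpha$ is ramified along some component of $\Sym\setminus U$; I would show it is ramified along $\Delta$. The inertia of the cover $\mt M_{g,n+d}\to U$ at the generic point of $\Delta$ is the transposition $S_2\subset S_d$ (the two coinciding points of a divisor $2p+D'$ are swapped), with decomposition group $S_2\times S_{d-2}$; the residue of the inflated class $\alpha$ is then read off from the branching formula for the Schur class, $\operatorname{res}^{S_d}_{S_2\times S_{d-2}}(\mathrm{Schur}_d)=\mathrm{Schur}_{d-2}+\mathrm{sgn}_{S_2}\cup\mathrm{sgn}_{S_{d-2}}$. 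Only the cross term contributes to the residue, giving the sign character of the residual $S_{d-2}$-cover of $\Delta$ (the cover ordering the remaining $d-2$ points of $2p+D'$), which is nontrivial for $d\ge 4$ since $\sqrt{\operatorname{disc}(D')}$ is not a rational function on the relevant symmetric product. Thus $\partial_\Delta\alpha\ne 0$, so $\operatorname{im}\big(\Br(\Sym)\big)\cap\ker\phi^{\ast}=0$, and the composite $\Br(\Sym)\hookrightarrow\Br(U)\xrightarrow{\phi^{\ast}}\Br(\mt M_{g,n+d})$ is the desired canonical inclusion. The hard part is precisely this ramification computation — applying the Schur-class branching formula correctly and controlling the class of the discriminant divisor.
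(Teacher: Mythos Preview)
Your proposal addresses only part~(v), and your route is genuinely different from the paper's; I believe it is essentially correct, though the ramification step deserves the care you flag.

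The paper proceeds through the \emph{full} fibred product $\Cid$ rather than the open $\mt M_{g,n+d}$. It compares the schematic quotient $\Sym=\Cid/S_d$ with the stacky quotient $[\Cid/S_d]$ via $\mu_k$-coefficients, and the crucial step (eliminating exactly the Schur obstruction you isolate) is done by restriction to a single fibre $X$: one checks that $H^2_{\mathrm{an}}(\mathrm{Sym}^dX,\mathbb Z)\to H^2_{\mathrm{an}}(X^d,\mathbb Z)$ is a split injection of free abelian groups using Macdonald's explicit bases, so the mod-$k$ map is injective too. The passage from $H^2(-,\mu_k)$ to $\Br'$ then requires $H^1\!\big(S_d,\Pic(\Cid)\big)=0$; since $\Pic(\Cid)$ contains the diagonal classes $\mt D_{i,j}$, this forces an extra computation with the $S_d$-module $U_d=\langle e_{i,j}\rangle\subset\mathrm{Sym}^2 T_d$.

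Your approach trades that topology for arithmetic. By working over the open $U$ you get the much simpler $S_d$-module $\Pic(\mt M_{g,n+d})\cong\mathbb Z^{\,n+1}_{\mathrm{triv}}\oplus\mathbb Z[S_d/S_{d-1}]$, so $H^1(S_d,\Pic)=0$ is immediate by Shapiro, and the Hochschild--Serre argument reduces $\ker\phi^*$ to a quotient of the Schur multiplier. The residue computation you sketch is correct: over the henselisation at the generic point of $\Delta$ one has $G_K\cong\hat{\mathbb Z}\times G_\kappa$ (we are over $\mathbb C$), the map to $S_d$ factors through $S_2\times S_{d-2}$ with the inertia $\hat{\mathbb Z}$ surjecting onto $S_2$ and $G_\kappa$ onto $S_{d-2}$; since $H^2(\hat{\mathbb Z},\mu_2)=0$, only the K\"unneth cross term survives, and Schur's relation $\tilde\tau_i\tilde\tau_j=z\,\tilde\tau_j\tilde\tau_i$ for disjoint transpositions forces that cross term to be $\mathrm{sgn}_{S_2}\cup\mathrm{sgn}_{S_{d-2}}$. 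Hence $\partial_\Delta\alpha=\rho_2^*(\mathrm{sgn}_{S_{d-2}})$, which is nontrivial for $d\geq 4$ because the $A_{d-2}$-intermediate cover of $\mathrm{Sym}^{d-2}\mt C_{g,n}$ is nontrivial. Note also that the torsor $\mt M_{g,n+d}\to U$ extends to an $S_d$-torsor over $\Sym\setminus\Delta$ (namely $\Cid$ minus its diagonals), so you need not worry separately about the boundary components $\{x_i\in D\}$.

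In short: the paper kills the Schur class by a fibrewise topological injectivity (Macdonald), you kill it by showing it is ramified along the discriminant; the paper pays with a harder $H^1(S_d,\Pic)$ computation, you pay with the residue computation. Both yield the same canonical inclusion, since the composite $\mt M_{g,n+d}\hookrightarrow\Cid\to\Sym$ used in the paper agrees with your $\mt M_{g,n+d}\to U\hookrightarrow\Sym$.
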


When $g\geq 4$, the Brauer group of $\Mg$ vanishes (see Theorem \ref{cohmg}(ii)). So, in this range, we have a complete description of the Brauer groups.

\begin{cora}\label{corb}Let $d,r,g,n\in\mathbb Z$, such that $r\geq 1$, $g\geq 4$ and $n\geq 0$. We have the following:
	\begin{enumerate}[(i)]
		\item The Brauer class $[\nud]$ generates the Brauer group of the rigidification $\Vr$ and of $\Us$. Furthermore:
		$$\Br\left(\Us\right)\cong\Br(\Vr)=\begin{cases}
		\mathbb Z/\operatorname{gcd}(d+r(1-g),r( d+1-g), r(2g-2))\mathbb Z, &\text{ if }n=0,\\
		\mathbb Z/\operatorname{gcd}(r,d)\mathbb Z, &\text{ if }n>0.
		\end{cases}$$
		\item $\Br(\Vc)=\Br(\Sym)=0$.		
	\end{enumerate}
\end{cora}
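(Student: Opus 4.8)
The plan is to deduce the corollary formally from Theorem~\ref{teoa}, combined with the vanishing of the Brauer group of the moduli of curves: for $g\geq 4$ one has $\Br(\mathcal M_{g,m})=0$ for every $m\geq 0$ by Theorem~\ref{cohmg}(ii). I would establish (ii) first and then read off (i).

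For (ii), the vanishing $\Br(\Sym)=0$ is immediate: Theorem~\ref{teoa}(v) gives an inclusion $\Br(\Sym)\hookrightarrow\Br(\mathcal M_{g,n+d})$, and the target is zero since $g\geq 4$. For $\Br(\Vc)$, Theorem~\ref{teoa}(iv) gives $\Br(\Vc)\hookrightarrow\Br(\text{Sym}^{d}\mathcal C_{g,n})=0$ as soon as $d$ is large enough, so the only thing left is to remove the largeness hypothesis, and here I would use a tautological twist. Letting $\pi\colon\Ci\to\Mg$ be the universal curve, tensoring a family of vector bundles by the relative dualizing sheaf $\omega_\pi$ (of relative degree $2g-2$) defines an equivalence $\Vc\simeq\mathcal V ec^{\,d+r(2g-2)}_{r,g,n}$ when $n=0$, while tensoring by $\mathcal O_{\Ci}(\sigma_1)$ defines an equivalence $\Vc\simeq\mathcal V ec^{\,d+r}_{r,g,n}$ when $n\geq 1$; in both cases the equivalence is visibly compatible with base change and with the canonical $\mathbb G_m$ sitting in the automorphism groups. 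Consequently $\Br(\Vc)$ depends only on $d$ modulo $r(2g-2)$ (resp. modulo $r$), and since each such residue class contains arbitrarily large representatives, I may replace $d$ by one for which Theorem~\ref{teoa}(iv) applies, obtaining $\Br(\Vc)=0$ for every $d$.

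For (i), I would feed $\Br(\Vc)=0$ into the exact sequence of Theorem~\ref{teoa}(ii), namely $\mathbb Z\cdot[\nud]\to\Br(\Vr)\to\Br(\Vc)=0$: exactness forces the first arrow to be surjective, so $\Br(\Vr)$ is cyclic and generated by the Brauer class $[\nud]$. Its order is exactly the quantity computed in Theorem~\ref{teoa}(i), which gives the asserted cyclic group, and the canonical isomorphism $\Br(\Us)\cong\Br(\Vr)$ of Theorem~\ref{teoa}(iii) transports both the group and its distinguished generator to $\Us$; this is precisely the first displayed formula.

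The corollary is therefore essentially bookkeeping on top of Theorem~\ref{teoa}, and I do not anticipate a genuine obstacle. The only point that needs an argument rather than a direct citation is the degree-reduction step in part (ii); but tautological twists are manifestly equivalences of moduli stacks, and the periodicity they produce in $d$ suffices because every residue class modulo a fixed positive integer is unbounded.
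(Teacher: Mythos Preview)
Your argument is correct and follows the paper's intended route: the paper does not spell out a proof of the corollary, simply noting that it follows from Theorem~\ref{teoa} together with the vanishing $\Br(\mathcal M_{g,m})=0$ for $g\geq 4$ (Theorem~\ref{cohmg}(ii)), and your write-up is precisely this deduction. The one step requiring care, namely removing the ``$d$ big enough'' hypothesis in Theorem~\ref{teoa}(iv) via the tautological twist by $\omega_\pi$ (or by $\mathcal O(\sigma_1)$ when $n>0$), is exactly the device the paper itself invokes in the proof of Proposition~\ref{br=br}, so your use of it here is entirely in keeping with the paper's methods.
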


When $r=1$ and $n=0$, Corollary \ref{corb} gives a positive answer to a question posed by M. Melo and F. Viviani (see \cite[Conjecture 6.9]{MV}), at least when $g\geq 4$.\\

Theorems \ref{teocoarse} and \ref{teoa} are a collection of the major results of this work. We remark that the theorems are stated in terms of the Brauer group $\Br(-)$. On the other hand, all the results of the next sections are stated in terms of the cohomological Brauer group $\Br'(-):=H^2(-, \mathbb{G}_m)_{\text{tors}}$. In our situation, the Brauer group would be \emph{a priori} a subgroup of the cohomological one. However, since the generators for $\Br'$ come from $\Br$, we have an equality $\Br'=\Br$ for our stacks.\\

With this in mind, we have that points \ref{teoa}$(i)$ and \ref{teoa}$(ii)$ are Theorem \ref{kern}. Point \ref{teoa}$(iii)$ is Theorem \ref{stack=coarse}. Point \ref{teoa}$(iv)$ is the combination of Proposition \ref{br=br} and Corollary \ref{sym=jac}. Point \ref{teoa}$(v)$ is Theorem \ref{brsym}. 

On the other hand, the isomorphism of Theorem \ref{teocoarse} is nothing but Corollary \ref{corb}(i). The part about the generator is precisely the content of Proposition \ref{generatorproj}.\\

The paper is organized in the following way. In Section \ref{prel}, we extend some well-known facts about the Brauer group of schemes to the context of Artin stacks. In Section \ref{vect}, we introduce the three different incarnations of the universal moduli space of vector bundles on marked curves: they are the spaces $\Us$, $\Vc$, $\Vr$ announced at the points (1), (2), (3), respectively. Furthermore, we will study the relationships between their Brauer groups. In Section \ref{bruniv}, we introduce the universal symmmetric product $\Sym$ of point (4) and complete the proof of Theorem \ref{teoa}. And in Section \ref{someresults}, we collect some auxiliary results about the moduli stack of marked curves $\Mg$ and the relative product $\Cid$ of the universal curve over $\Mg$.\\\\
\textbf{Notation.} All the schemes and stacks will be supposed to be over the complex numbers. With commutative, resp. cartesian, diagrams of stacks, we will intend in the $2$-categorical sense. The sheaves and their cohomologies will be taken with respect to the Lisse-\'etale site, unless otherwise stated.\\\\
\textbf{Acknowledgements.} The question was raised by Indranil Biswas to the first named author. We would like to thank him for suggesting the problem and for many useful observations about it. We would like to thank Ben Williams for helping us with a previous version of our main proof (see remark \ref{topol}), Angelo Vistoli and David Rydh for various helpful comments, and Eduardo Esteves, Johan Martens and Filippo Viviani for interesting discussions related to this work. The first named author was supported by EPSRC grant EP/N029828/1.

\begin{section}{Preliminaries about the Brauer group of an Artin stack.}\label{prel}

In this section we will introduce the different incarnations of the Brauer group, and establish some basic result on the (cohomological) Brauer group of an Artin stack.
	
	\begin{subsection}{Brauer group, Cohomological Brauer group, Bigger Brauer group}
		
		In the classical setting, given a Noetherian scheme $X$, the \emph{Brauer Group} $\text{Br}(X)$ is the group of Azumaya algebras over $X$, i.e. sheaves of algebras that are \'etale locally isomorphic to the endomorphism group of a vector bundle, modulo the relation that $\mt E \sim \mt E'$ if there exist vector bundles $V, V'$ on $X$ such that $\mt E \otimes \operatorname{End}(V) \simeq \mt E' \otimes \operatorname{End}(V')$.
		
		Azumaya algebras of rank $n^2$ are classified by $\mathrm{PGL_n}$-torsors, and the exact sequence 
		$$ 1 \rightarrow \mathbb{G}_m \rightarrow \mathrm{GL}_n \rightarrow \mathrm{PGL}_n \rightarrow 1$$
		induces a map $H^1(X,\mathrm{PGL}_n) \rightarrow H^2(X, \mathbb{G}_m)$ sending an azumaya algebra $\mt E$ to an element $\alpha_{\mt E} \in H^2(X, \mathbb{G}_m)$, which is always of $n$-torsion. This map is injective \cite[IV, thm 2.5]{Mi}, su that we have an inclusion $\text{Br}(X) \subseteq H^2(X, \mathbb{G}_m)_{\text{tors}}$.
		
		In the setting of schemes, it is known that under some very general hypotheses the inclusion is an isomorphism. Gabber \cite{Gir68} proved it for affine schemes, and had an unpublished proof for schemes carrying an ample line bundle. De Jong gave a new proof of the latter statement \cite{DJ} using twisted sheaves, which are some special sheaves on $\mathbb{G}_m$-gerbes.
		
		The definition of the Brauer group can be extended to an extremely general setting. In fact, given a locally ringed topos $(\mt X, \mathcal O)$ (e.g. the category $\mt X_{\text{\'et}}$ of lisse-\'etale sheaves on an Artin stack together with the structure sheaf $\mathcal{O}_{\mt X}$) one can define the Brauer group $\Br(\mt X, \mathcal O)$ and it is in general true that it injects into $H^2(\mt X, \mathbb{G}_m)$. Moreover, if $\mt X$ is connected of quasi-compact (i.e. if the Artin stack $\mt X$ has one of these properties) then the image of $\Br(\mt X, \mathcal O)$ is torsion and the inclusion $\Br(\mt X, \mathcal O) \subseteq H^2(\mt X, \mathbb{G}_m)_{\text{tors}}$ holds \cite{Gr66a}. A detailed explanation of this can be found in the extended version of Antieau and Williams' paper \cite{AW}, which can be found on the second author's website.\\
		
		In \cite{Gr66b} Grothendieck introduced the \emph{cohomological Brauer group} $\Br'(\mt X)$, which is just the torsion part of $H^2(\mt X, \mathbb{G}_m)$. As shown by the results above, under various hypotheses the cohomological Brauer group and Brauer group are isomorphic, and in general for an Artin stack $\mt X$ we have $\Br(\mt X) \subseteq \Br'(\mt X)$ unless the stack $\mt X$ is badly disconnected. Computing the cohomological Brauer group is vastly easier than computing the Brauer group in general, as the former allows for much stronger tools, such as spectral sequences, and in the case regular schemes the whole machinery of unramified cohomology.

Our computations in the following will always be on the cohomological Brauer group $\Br'$. We remark in each of the cases we consider our computation computes the ordinary Brauer group as well as the generator we give for $\Br'(\mt X)$ comes from $\Br( \mt X)$.	

We should also mention that one can construct a larger group, namely the \emph{Bigger Brauer group} $\widetilde{\text{Br}}(\mt X)$, defined by Taylor \cite{Ta} and adapted to the stack-theoretical setting by Heinloth and Schroer \cite{HS}, where the main difference is that the algebras are not required to have a unit. In their article they prove that if $\mt X$ is a Noetherian Artin stack with quasi-affine diagonal, the equality $\widetilde{\text{Br}}(\mt X)=H^2(\mt X, \mathbb{G}_m)$ holds (note that the group on the right need not be torsion).
	\end{subsection}
	
	\begin{subsection}{Some invariance results}
	
	There are some well known invariance results for the cohomological Brauer group of a regular Noetherian scheme, namely that the pullback through a vector bundle or an open immersion whose complement has high codimension are isomorphisms and that in general the pullback through an open immersion is injective. These hold for Noetherian regular Deligne Mumford stacks as well, as proven in \cite[2.5]{AM}. The rest of the section is dedicated to extending these results to the context of Artin stacks.
	
	For the first two properties, our strategy will be to prove the result for $H^2(\mt X, \mu_n)$ via local arguments, then use the fact that it is additionally true for the Picard group and the Kummer sequence $$1 \rightarrow \mu_n \rightarrow \mathbb{G}_m \xrightarrow{(-)^n} \mathbb{G}_m \rightarrow 1$$ to conclude.
	
		\begin{lem}
			Let $X$ be a scheme smooth over a field of characteristic zero, and let $F$ be a torsion sheaf. Then:
			\begin{enumerate}
				\item We have $H^i(\mathbb A_X^d, \pi^*(F)) = H^i(X,F)$ for all $i \geq 0$.
				\item Given an open subscheme $U \subset X$ whose complement has codimension at least $c$, we have $H^{i}(X,F)=H^{i}(U,F)$ for any $0 \leq i \leq 2s -1$.
			\end{enumerate}
		\end{lem}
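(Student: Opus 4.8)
Both statements are classical facts about the \'etale cohomology of regular schemes in characteristic zero, and I would argue as follows.

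For (1), the plan is to reduce to the case $d=1$ by induction: factor the projection $\mathbb{A}^d_X\to X$ as a tower of affine lines $\mathbb{A}^d_X\to\mathbb{A}^{d-1}_X\to\cdots\to\mathbb{A}^1_X\to X$, noting that each intermediate space $\mathbb{A}^k_X$ is again smooth over the base field, so that it suffices to prove $H^i(\mathbb{A}^1_Y,\pi^*G)=H^i(Y,G)$ for $Y$ smooth over the field and $G$ a torsion sheaf. For this I would use the Leray spectral sequence for $\pi\colon\mathbb{A}^1_Y\to Y$, which reduces the claim to $\pi_*\pi^*G=G$ and $R^q\pi_*\pi^*G=0$ for $q\geq 1$. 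These sheaves can be computed on stalks at geometric points of $Y$; since $\pi$ is of finite presentation, this reduces to the acyclicity of the affine line over a strictly henselian local ring for torsion coefficients. In characteristic zero all torsion is invertible, so after writing $G$ as a filtered colimit of constructible subsheaves (and using that cohomology commutes with such colimits) one reduces to $G=\mathbb{Z}/m$, and then the case $G=\mathbb{Z}/m$ follows from the known cohomology of $\mathbb{P}^1$ via proper base change and the Gysin sequence for $\{\infty\}\hookrightarrow\mathbb{P}^1$. (This is of course the standard homotopy invariance of \'etale cohomology; I would simply cite it.)

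For (2), put $Z=X\setminus U$, a closed subset of codimension $\geq c$, and use the long exact sequence of cohomology with supports
$$\cdots\to H^i_Z(X,F)\to H^i(X,F)\to H^i(U,F)\to H^{i+1}_Z(X,F)\to\cdots.$$
The key input is cohomological semi-purity: since $X$ is regular and $Z$ has codimension $\geq c$, one has $H^j_Z(X,F)=0$ for all $j<2c$. One deduces this from absolute cohomological purity by stratifying $Z$ into smooth locally closed subschemes and running a Noetherian induction (reducing along the way, by a colimit argument, to the case of a locally constant constructible $F$, which is all one needs for the intended application $F=\mu_n$). Feeding this vanishing into the sequence, the restriction map $H^i(X,F)\to H^i(U,F)$ is an isomorphism for $i\leq 2c-2$ and injective for $i=2c-1$, which is the assertion.

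The one genuinely delicate step is the semi-purity input in (2): one cannot just invoke the naive purity isomorphism because $Z$ need not be regular and $F$ need not be locally constant, so the stratification/d\'evissage (equivalently, Gabber's absolute purity) is really needed. Everything else is formal once the acyclicity of $\mathbb{A}^1$ and the excision sequence are in hand, and characteristic zero removes the usual torsion-versus-characteristic caveats.
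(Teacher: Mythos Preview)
Your proposal is correct and follows essentially the same route as the paper's (two-sentence) proof: for (1) you spell out the homotopy invariance that the paper attributes to smooth base change, and for (2) you run exactly the ``purity for smooth pairs plus stratification'' argument the paper names. The only cosmetic difference is that you invoke Gabber's absolute purity where SGA purity for smooth pairs already suffices over a characteristic-zero base; either reference closes the argument.
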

		\begin{proof}
			The first fact is a well known consequence of the smooth base change theorem. The second fact is a consequence of cohomological purity for smooth couples, plus a stratification argument.
		\end{proof}
		
		\begin{lem}\label{rescompl}
			Let $\mt X$ be an algebraic stack smooth over a field of characteristic zero. Then:
			\begin{enumerate}
				\item Given a vector bundle $\mt E \xrightarrow{\pi} \mt X$, the pullback $\Pic(\mt X) \xrightarrow{\pi^*} \Pic(\mt E)$ is an isomorphism.
				\item Given an open substack $\mt U \subset \mt X$ whose complement has codimension at least $2$, we have $\Pic(\mt X) =\Pic(\mt U)$.
			\end{enumerate}
		\end{lem}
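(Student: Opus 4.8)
The statement is essentially the stack-theoretic analogue of the classical results for schemes, so the plan is to reduce to the scheme case via a smooth presentation, or better, to run the same cohomological purity arguments directly on the stack. For part (1), I would first reduce to the case of the Picard group of a total space of a vector bundle. The cleanest route: for an algebraic stack $\mt X$ smooth over a field, one has the standard exact sequence relating $\Pic$ to $H^1(-,\mathbb{G}_m)$, and then uses that $H^1(\mt E, \mathbb{G}_m) \to H^1(\mt X, \mathbb{G}_m)$ — the pullback — is controlled by the Leray spectral sequence for $\pi\colon \mt E \to \mt X$. Since $\pi$ is an affine-space bundle, smooth base change (which does hold for algebraic stacks, working on the lisse-\'etale site) gives $R^0\pi_*\mathbb{G}_m = \mathbb{G}_m$ and kills the higher direct images of torsion sheaves; one then handles the non-torsion part $\mathbb{G}_m/\mu_n$ by a limit argument over $n$, exactly as in Lemma above for schemes but now pulled through a smooth atlas. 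Concretely, pick a smooth surjection $U \to \mt X$ with $U$ a scheme; then $U \times_{\mt X} \mt E \to U$ is a vector bundle over a smooth scheme, where the first Lemma applies, and descend the isomorphism along the simplicial resolution $U_\bullet$.

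\textbf{Part (2).} Here the statement is that $\Pic(\mt X) = \Pic(\mt U)$ when $\mt U \subset \mt X$ is open with complementary closed substack $\mt Z$ of codimension $\geq 2$. The plan is again to pass to a smooth atlas: choosing $U \to \mt X$ smooth surjective with $U$ a scheme, the preimage of $\mt Z$ in $U$ has codimension $\geq 2$ as well (smooth morphisms preserve codimension of closed substacks), so the scheme-level result from the first Lemma gives the comparison on each term of the simplicial scheme $U_\bullet$, in the range $i \leq 2\cdot 2 - 1 = 3 \geq 1$, which is enough to conclude for $H^1$ and hence for $\Pic$ by descent (comparing the two Cartan–Leray/descent spectral sequences for $\mt X$ and $\mt U$). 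One has to be a little careful that codimension of substacks is well-behaved and that ``smooth over a field of characteristic zero'' is used to guarantee that $\mt X$ (hence $U$) is regular, so that purity applies; this is exactly the hypothesis in the statement.

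\textbf{Main obstacle.} The genuinely delicate point is the descent/spectral-sequence bookkeeping on the lisse-\'etale site: one needs smooth base change and cohomological purity to be available for algebraic (not just Deligne–Mumford) stacks, and to know that the simplicial-resolution argument converges correctly — i.e. that comparing $H^1$ on all the $U_p$ for the two stacks really does force an isomorphism on $\Pic$. For Deligne–Mumford stacks this is \cite[2.5]{AM}; for Artin stacks the issue is that the atlas $U \to \mt X$ is only smooth, not \'etale, so one cannot directly invoke the \'etale-cohomological comparison and must instead argue through the full cohomological machinery (Leray for $\pi$, Cartan–Leray for the atlas). I expect this is where the bulk of the work lies; the geometric inputs (affine bundles don't change cohomology of torsion sheaves, purity in codimension $\geq 2$) are then formal consequences once the framework is set up. Note also that for the $\mathbb{G}_m$-coefficient (as opposed to $\mu_n$) statements one always needs the extra step of taking the limit over the Kummer sequences, and checking that the relevant $\varprojlim^1$ terms vanish — routine, but it must be said.
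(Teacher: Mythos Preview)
The paper does not actually prove this lemma: it simply cites \cite[1.9]{PTT15}. So there is no ``paper's approach'' to compare your sketch against beyond observing that you are reconstructing an argument rather than deferring to the literature.

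Your descent strategy is essentially correct and is close to how such results are proved in practice: choose a smooth atlas $U \to \mt X$, form the \v{C}ech nerve $U_\bullet$, and compare the cohomological-descent spectral sequences for $\mathbb{G}_m$ on $\mt X$ and on $\mt E$ (resp.\ on $\mt U$). Since each $U_p$ is a smooth scheme and the pullback of $\mt E$ (resp.\ the preimage of $\mt U$) to $U_p$ is again a vector bundle over a smooth scheme (resp.\ the complement of a codimension-$\geq 2$ closed subscheme in a regular scheme), the scheme-level results give isomorphisms on $H^0$ and $H^1$ with $\mathbb{G}_m$ coefficients termwise, which is what is needed to conclude for $H^1(\mt X,\mathbb{G}_m)=\Pic(\mt X)$. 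Cohomological descent along smooth hypercovers is standard for the lisse-\'etale site, so the ``main obstacle'' you flag is real but not deep.

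One correction: your aside about handling the ``non-torsion part $\mathbb{G}_m/\mu_n$ by a limit argument over $n$'' and checking $\varprojlim^1$ vanishing is a red herring and, taken literally, would not work --- the Picard group of a stack need not be profinitely separated, so you cannot recover $\Pic$ from the system $\Pic/n$. Fortunately this step is also unnecessary: the descent spectral sequence runs directly with $\mathbb{G}_m$ coefficients, and the scheme-level inputs (homotopy invariance of $\Pic$ for affine bundles over smooth schemes; invariance of $\Pic$ under removing codimension-$\geq 2$ loci from regular schemes) are already statements about $H^1(-,\mathbb{G}_m)$, not about torsion sheaves. Drop the Kummer/limit detour and the argument is clean.
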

		\begin{proof}
			This is proven in \cite[1.9]{PTT15}.
		\end{proof}
		
		\begin{prop}\label{vec}Let $\mt X$ be a smooth Artin stack and $\mt E \xrightarrow{\pi} \mt X$ be a vector bundle over it. Then 
			the morphism of cohomological Brauer groups
			$$
			\Br'(\mathcal X)\hookrightarrow \Br'(\mt E)
			$$
			is an isomorphism.
		\end{prop}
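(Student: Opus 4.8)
The plan is to follow the strategy sketched before the statement: reduce via the Kummer sequence to the corresponding invariance for the finite sheaves $\mu_n$, and prove the latter by descent along a smooth atlas of $\mt X$. First I would extract, for every integer $n\geq 1$, from the long exact cohomology sequence of $1\to\mu_n\to\mathbb{G}_m\xrightarrow{(-)^n}\mathbb{G}_m\to 1$ the short exact sequence
\begin{equation*}
0\longrightarrow \Pic(\mt X)/n\,\Pic(\mt X)\longrightarrow H^2(\mt X,\mu_n)\longrightarrow H^2(\mt X,\mathbb{G}_m)[n]\longrightarrow 0,
\end{equation*}
together with the analogous sequence for $\mt E$ and the vertical maps induced by $\pi^{*}$. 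By Lemma~\ref{rescompl}(1) the pullback $\Pic(\mt X)\to\Pic(\mt E)$ is an isomorphism, hence so is $\Pic(\mt X)/n\to\Pic(\mt E)/n$; a diagram chase (the five lemma) then shows that it suffices to prove that $\pi^{*}\colon H^2(\mt X,\mu_n)\to H^2(\mt E,\mu_n)$ is an isomorphism for every $n$. The proposition follows from this by passing to the colimit over $n$, since $\Br'(-)=\bigcup_{n}H^2(-,\mathbb{G}_m)[n]$.

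For the $\mu_n$-statement I would argue by smooth descent. Pick a smooth surjection $X_0\to\mt X$ from a scheme and let $X_\bullet$ be its \v{C}ech nerve, a simplicial algebraic space smooth over $\mt X$ (or take a smooth hypercover of $\mt X$ by schemes). The lisse-\'etale cohomology of $\mt X$ with coefficients in a sheaf pulled back from the small \'etale topology is computed by the descent spectral sequence $E_1^{p,q}=H^q(X_p,\mu_n)\Rightarrow H^{p+q}(\mt X,\mu_n)$, and likewise for $\mt E$, with $\pi^{*}$ a morphism of spectral sequences. Pulling back the vector bundle $\mt E$ along $X_\bullet\to\mt X$ yields in each degree $p$ a vector bundle $E_p\to X_p$, and $E_\bullet$ is the \v{C}ech nerve of the induced atlas $E_0\to\mt E$. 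Since a vector bundle over a scheme, or an algebraic space, is Zariski-locally trivial, smooth base change (the first Lemma above) together with the Leray spectral sequence for $E_p\to X_p$ shows that $H^q(X_p,\mu_n)\to H^q(E_p,\mu_n)$ is an isomorphism for all $p,q$. Hence $\pi^{*}$ is an isomorphism on $E_1$-pages, and therefore $H^i(\mt X,\mu_n)\xrightarrow{\ \sim\ }H^i(\mt E,\mu_n)$ for all $i$; the case $i=2$ is the one we need. (Equivalently, one may check directly that $R\pi_{*}\mu_n=\mu_n$, which is a smooth-local assertion on $\mt X$ reducing to the same scheme-level computation, and invoke the Leray spectral sequence for $\pi$.)

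The genuinely delicate point is the reduction from Artin stacks to schemes. One has to set up the descent spectral sequence carefully in the lisse-\'etale topos, where the site is not functorial in the naive way, so the formalism of Laszlo--Olsson and Behrend is required; one must know that $\mu_n$ and $\mathbb{G}_m$ are compatible with smooth pullback; and one must handle the fact that the nerve of an atlas of an Artin stack consists of algebraic spaces rather than schemes, so the first Lemma, stated for schemes, has to be extended (routinely) to algebraic spaces, or $X_\bullet$ replaced by a smooth hypercover by schemes. Once these foundational matters are in place, the remaining steps---the Kummer sequence, the five lemma, the colimit over $n$, and the Leray spectral sequence for a vector bundle---are purely formal.
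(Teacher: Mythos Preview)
Your proposal is correct. The overall architecture---Kummer sequence, Picard invariance from Lemma~\ref{rescompl}, five lemma reduction to $H^2(-,\mu_n)$---is exactly what the paper does. The difference lies only in how you establish $H^i(\mt X,\mu_n)\cong H^i(\mt E,\mu_n)$: your primary argument runs the smooth descent spectral sequence along an atlas $X_\bullet\to\mt X$ and invokes the scheme-level lemma on each $X_p$, whereas the paper works directly on $\mt X$ by observing that $R^q\pi_*\mu_n=0$ for $q>0$ (checked on strictly Henselian local rings in the lisse-\'etale site) and then letting the Leray spectral sequence for $\pi$ collapse. Your parenthetical alternative ``check $R\pi_*\mu_n=\mu_n$ and invoke Leray for $\pi$'' is in fact precisely the paper's route, and it sidesteps the foundational delicacies you flag about descent in the lisse-\'etale topos; the paper's argument is thus a bit more economical, while your descent version makes the reduction to schemes more explicit.
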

		\begin{proof}
			It is a well known fact that if $k$ is a strictly Henselian local ring its étale cohomology with torsion coefficients vanishes, so that $H^i(\mathbb A_k^d, \mu_n)=0$ for all $i > 0$. Consequently the sheaves $\mathrm{R}^i_{\pi_*}(\mu_n)$ are zero for all $i > 0$ as they are zero on the local rings in the Liss\'e-\'Etale site. The map $\pi$ induces a Leray spectral sequence $$H^{p}(\mt X, R^q_{\pi_*}(\mu_n))\Rightarrow H^{i}(\mt E, \mu_n)$$ which colapses, so we see that $H^i(\mt E, \mu_n)=H^i(\mt X, \mu_n)$ for all $i$. Moreover, we also know that $\Pic(\mt E) = \Pic(\mt X)$. This gives rise to the following commutative diagram with exact rows 
			$$
			\xymatrix{
				\Pic(\mt X) \ar[d] \ar[r]^{\cdot n} & \Pic(\mt X) \ar[d] \ar[r] & H^2(\mt X, \mu_n) \ar[r] \ar[d] & \Br'(\mt X)_n\ar[r] \ar[d] & 0\\
				\Pic(\mt E) \ar[r]^{\cdot n} & \Pic(\mt E) \ar[r] & H^2(\mt E, \mu_n) \ar[r] & \Br'(\mt E)_n \ar[r] & 0\\
			}
			$$
			
			the first three arrow are isomorphisms, so the five lemma allows us to conclude.
		\end{proof}
		
		\begin{prop}\label{opengeq2}Let $\mt X$ be a smooth Artin stack and $\mt U \subset \mt X$ be an open substack such that $\mt U^c$ has codimension two or more in $\mt X$. Then 
			the morphism of cohomological Brauer groups
			$$
			\Br'(\mathcal X)\hookrightarrow \Br'(\mt U)
			$$
			is an isomorphism.
		\end{prop}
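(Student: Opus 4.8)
The plan is to run the argument of Proposition \ref{vec}, with the vanishing of the higher direct images $R^i\pi_\ast\mu_n$ replaced by the statement that restriction to $\mt U$ induces isomorphisms on $H^i(-,\mu_n)$ in low degrees. Thus the proof splits into an \'etale-cohomology input (where all the work is) and a formal Kummer-sequence diagram chase (essentially copied from Proposition \ref{vec}).

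\emph{Step 1: restriction is an isomorphism on $\mu_n$-cohomology in low degrees.} First I would show that if $\mt U^{c}$ has codimension $\geq c$ in $\mt X$, then $H^i(\mt X,\mu_n)\to H^i(\mt U,\mu_n)$ is an isomorphism for all $0\leq i\leq 2c-1$. Pick a smooth surjection $p\colon V\to\mt X$ with $V$ a scheme. Since $\mt X$ is smooth over $\mathbb C$, so are $V$ and every term $V_s$ of the \v{C}ech nerve $V_\bullet$ of $p$ (the $V_s$ being algebraic spaces), and each projection $V_s\to\mt X$ is smooth. Smooth morphisms are flat with equidimensional fibres, hence preserve the codimension of closed subsets, so the preimage of $\mt X\setminus\mt U$ in each $V_s$ has codimension $\geq c$; moreover the \v{C}ech nerve of $V\times_{\mt X}\mt U\to\mt U$ is $V_\bullet\times_{\mt X}\mt U$. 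By cohomological descent along smooth hypercovers for the torsion sheaf $\mu_n$, the cohomology groups $H^*(\mt X,\mu_n)$ and $H^*(\mt U,\mu_n)$ are computed by the first-quadrant spectral sequences
$$E_1^{s,t}=H^t(V_s,\mu_n)\Rightarrow H^{s+t}(\mt X,\mu_n),\qquad E_1^{s,t}=H^t(V_s\times_{\mt X}\mt U,\mu_n)\Rightarrow H^{s+t}(\mt U,\mu_n).$$
Restriction induces a morphism between these spectral sequences which, by part (2) of the first Lemma of this subsection applied on each smooth algebraic space $V_s$ (the purity-and-stratification proof recalled there works verbatim for algebraic spaces), is an isomorphism on $E_1^{s,t}$ for every $t\leq 2c-1$, hence on the abutments in total degree $\leq 2c-1$. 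Since $c\geq 2$, in particular $H^2(\mt X,\mu_n)\to H^2(\mt U,\mu_n)$ is an isomorphism.

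\emph{Step 2: the Kummer diagram.} The Kummer sequence $1\to\mu_n\to\mathbb G_m\xrightarrow{(-)^n}\mathbb G_m\to 1$ produces, exactly as in the proof of Proposition \ref{vec}, a commutative diagram with exact rows whose vertical arrows are restriction along $\mt U\subset\mt X$: the two $\Pic$ columns are isomorphisms by Lemma \ref{rescompl}(2) and the $H^2(-,\mu_n)$ column is an isomorphism by Step 1, so the five lemma gives $\Br'(\mt X)_n\cong\Br'(\mt U)_n$ for every $n$. Passing to the colimit over $n$, both groups being torsion (as subgroups of $H^2(-,\mathbb G_m)_{\mathrm{tors}}$), shows that the pullback $\Br'(\mt X)\to\Br'(\mt U)$ is an isomorphism.

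I expect the only genuine obstacle to be Step 1: one must make sure that lisse-\'etale cohomology with torsion coefficients of the Artin stacks involved is really computed by the \v{C}ech spectral sequence of a smooth atlas (cohomological descent for smooth hypercovers), and that the high-codimension hypothesis propagates to every $V_s$ — which relies on smooth morphisms preserving codimension of closed subsets, and on the elementary purity statement recalled above holding for smooth algebraic spaces over $\mathbb C$, not only for schemes. Granting these, the remainder is a routine diagram chase.
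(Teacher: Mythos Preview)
Your proposal is correct and follows essentially the same two-step strategy as the paper: first establish that restriction induces an isomorphism $H^2(\mt X,\mu_n)\cong H^2(\mt U,\mu_n)$, then run the Kummer-sequence/five-lemma argument of Proposition~\ref{vec} together with Lemma~\ref{rescompl}(2). The only difference is in the bookkeeping for Step~1: the paper uses the Leray spectral sequence for the open immersion $i:\mt U\hookrightarrow\mt X$ and checks that $R^qi_*\mu_n$ vanishes in the relevant range by computing stalks on a smooth atlas (invoking purity there), whereas you pull back to the \v{C}ech nerve of a smooth atlas and apply purity level by level via cohomological descent. Both routes feed the same local purity input into a spectral sequence; the paper's choice avoids having to justify cohomological descent for the lisse-\'etale topology, which is exactly the point you flag as the potential obstacle.
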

		\begin{proof}
			Given a torsion sheaf $F$ on $\mt X$, consider the Leray spectral sequence $$H^{p}(\mt X, R^qi_*F)\Rightarrow H^{i}(\mt U, F)$$ induced by the inclusion of $\mt U$ in $\mt X$.
			By Gabber's absolute purity theorem, we know that given a smooth scheme $X$ and an open subset $U \subset X$ whose complement has codimension at least two, we have $H^{q}(X,F)=H^{q}(U,F)$ for any torsion sheaf $F$ and $0 \leq q \leq 3$.
			This shows that $R^qi_*F=0$ for $0 \leq q \leq 3$, which implies that $H^{2}(\mt U, F)=H^{2}(\mt X, F)$. 
			
			Moreover the inclusion $\mt U$ in $\mt X$ induces an isomorphism of Picard groups. Thus, using the kummer exact sequence we get the following commutative diagram with exact rows
			$$
			\xymatrix{
				\Pic(\mt X) \ar[d] \ar[r]^{\cdot n} & \Pic(\mt X) \ar[d] \ar[r] & H^2(\mt X, \mu_n) \ar[r] \ar[d] & \Br'(\mt X)_n \ar[r] \ar[d] & 0\\
				\Pic(\mt U) \ar[r]^{\cdot n} & \Pic(\mt U) \ar[r] & H^2(\mt U, \mu_n) \ar[r] & \Br'(\mt U)_n \ar[r] & 0\\
			}
			$$
			
			and the fact that the first three vertical arrows are isomorphisms allows us to conclude as above.
		\end{proof}
		
%

The last statement can be extended directly from Deligne-Mumford stacks to Artin stacks.		
		
		\begin{prop}\label{openrestr}Let $\mathcal X$ be a regular Noetherian Artin stack, and let $\mathcal U \subset \mathcal X$ be an open substack. Then the morphism of cohomological Brauer groups
			$$
			\Br'(\mathcal X)\hookrightarrow \Br'(\mt U)
			$$
			is injective.
		\end{prop}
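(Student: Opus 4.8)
The plan is to reduce, via a stratification of the complement, to the case of removing a single regular irreducible closed substack of codimension one, and then to argue in the style of Propositions \ref{vec} and \ref{opengeq2}, combining the localization sequence for $\mu_n$ with the Kummer sequence. First we may assume $\mathcal U\neq\emptyset$ (otherwise the statement is trivial); set $\mathcal Z=\mathcal X\setminus\mathcal U$. If $\mathcal Z$ has codimension $\geq 2$ we are done by Proposition \ref{opengeq2}. In general, as $\mathcal X$ is Noetherian and regular, $\mathcal Z$ has finitely many irreducible components; let $\mathcal S\subseteq\mathcal Z$ be the union of those of codimension $\geq 2$ together with the singular locus of the union of the codimension-one components and their pairwise intersections. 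Then $\mathcal S$ is closed of codimension $\geq 2$ in $\mathcal X$, so $\Br'(\mathcal X)\cong\Br'(\mathcal X\setminus\mathcal S)$ by Proposition \ref{opengeq2}, while inside $\mathcal X\setminus\mathcal S$ the complement of $\mathcal U$ is a finite disjoint union $\mathcal D_1\sqcup\cdots\sqcup\mathcal D_\ell$ of regular irreducible closed substacks of codimension one. Removing the $\mathcal D_j$ one at a time (each remains closed, regular and irreducible of codimension one in the complement of the previously removed ones), it suffices to treat the following situation: $\mathcal Y$ a regular Noetherian Artin stack, $\mathcal D\subset\mathcal Y$ a regular irreducible closed substack of codimension one, $j\colon\mathcal W:=\mathcal Y\setminus\mathcal D\hookrightarrow\mathcal Y$, and to show that $\Br'(\mathcal Y)\to\Br'(\mathcal W)$ is injective.

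For this core case, fix $n\geq 1$. The sheaves $R^q j_*\mu_n$ can be computed étale-locally on a smooth atlas of $\mathcal Y$, where the question becomes that of the complement of a regular divisor in a regular Noetherian scheme; by Gabber's absolute purity theorem one obtains $R^0 j_*\mu_n=\mu_n$, $R^1 j_*\mu_n=\iota_*(\mathbb Z/n\mathbb Z)$ with $\iota\colon\mathcal D\hookrightarrow\mathcal Y$, and $R^q j_*\mu_n=0$ for $q\geq 2$. Hence the Leray spectral sequence for $j$ degenerates into a long exact (Gysin-type) sequence whose relevant portion reads
$$
H^0(\mathcal D,\mathbb Z/n\mathbb Z)\longrightarrow H^2(\mathcal Y,\mu_n)\longrightarrow H^2(\mathcal W,\mu_n),
$$
so that $\ker\big(H^2(\mathcal Y,\mu_n)\to H^2(\mathcal W,\mu_n)\big)$ is the image of the first map. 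Since $\mathcal D$ is irreducible, $H^0(\mathcal D,\mathbb Z/n\mathbb Z)=\mathbb Z/n\mathbb Z$, and its generator is sent to the cycle class of the divisor $\mathcal D$, which by the standard compatibility between cycle classes and the Kummer boundary equals the image of $\mathcal O_{\mathcal Y}(\mathcal D)\in\Pic(\mathcal Y)$ under $\Pic(\mathcal Y)\twoheadrightarrow\Pic(\mathcal Y)/n\hookrightarrow H^2(\mathcal Y,\mu_n)$. Therefore this kernel is contained in the image of $\Pic(\mathcal Y)/n$ in $H^2(\mathcal Y,\mu_n)$.

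I would then conclude by a diagram chase on the commutative ladder with exact rows coming from the Kummer sequence,
$$
\xymatrix{
\Pic(\mathcal Y)/n \ar[d]^{\alpha} \ar[r] & H^2(\mathcal Y,\mu_n) \ar[d]^{\beta} \ar[r] & \Br'(\mathcal Y)_n \ar[d]^{\delta} \ar[r] & 0\\
\Pic(\mathcal W)/n \ar[r] & H^2(\mathcal W,\mu_n) \ar[r] & \Br'(\mathcal W)_n \ar[r] & 0\\
}
$$
in which $\alpha$ is surjective because line bundles extend across the divisor $\mathcal D$ on the regular stack $\mathcal Y$. Given $\bar\xi\in\Br'(\mathcal Y)_n$ with $\delta(\bar\xi)=0$, lift it to $\xi\in H^2(\mathcal Y,\mu_n)$; then $\beta(\xi)$ comes from some $\bar L\in\Pic(\mathcal W)/n$, which by surjectivity of $\alpha$ lifts to $\bar M\in\Pic(\mathcal Y)/n$, and $\xi$ minus the image of $\bar M$ lies in $\ker\beta$, hence in the image of $\Pic(\mathcal Y)/n$ by the previous step; thus $\xi$ itself is in that image and $\bar\xi=0$. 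So $\delta$ is injective for every $n$, and since $\Br'(\mathcal Y)=\bigcup_n\Br'(\mathcal Y)_n$ we get that $\Br'(\mathcal Y)\to\Br'(\mathcal W)$ is injective, which completes the reduction and the proof. The main points that need care are transporting Gabber purity and the cycle-class/Kummer compatibility from schemes to Artin stacks — both are étale-local on a smooth atlas, hence routine — and the bookkeeping in the first paragraph, where the high-codimension and non-regular strata of $\mathcal Z$ must be discarded (using Proposition \ref{opengeq2}) before purity becomes applicable; this stratification step is where I expect most of the work to lie.
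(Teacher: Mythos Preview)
Your argument is correct and complete. The paper, however, takes a much shorter route: it simply cites \cite[2.5(iv)]{AM}, where the statement is proved for regular Noetherian Deligne--Mumford stacks, and asserts that the same proof carries over verbatim to Artin stacks. No details are given.

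Your approach is therefore genuinely different in presentation: you spell out the full mechanism (stratify the complement, use Proposition~\ref{opengeq2} to discard codimension-$\geq 2$ pieces, then for a single regular divisor combine Gabber purity, the Gysin sequence, and a Kummer diagram chase). This is essentially the argument underlying the cited reference, so the mathematical content is not new, but your write-up is self-contained and makes explicit exactly which local inputs must be transported from schemes to Artin stacks --- something the paper leaves entirely to the reader. What the paper's approach buys is brevity; what yours buys is that a reader need not chase the reference.

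One small point of bookkeeping: in your reduction step you invoke Proposition~\ref{opengeq2}, which is stated for \emph{smooth} Artin stacks, whereas the present proposition is for \emph{regular Noetherian} ones. This is harmless --- Gabber's absolute purity, which drives the proof of Proposition~\ref{opengeq2}, holds for regular Noetherian schemes and not merely smooth ones --- but you should either note this or silently strengthen Proposition~\ref{opengeq2} to the regular Noetherian setting before citing it.
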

		\begin{proof}
			In \cite[2.5, iv]{AM} the property above is proven for Noetherian regular Deligne-Mumford stacks. The same proof holds without change in the broader context of Artin stacks.
		\end{proof}
		
	\end{subsection}
\end{section}

\begin{section}{The moduli stacks $\Vc, \Vr$ and the moduli space $\U$.}\label{vect}

In this section we will recall some facts about the \emph{universal moduli stack of vector bundles on smooth curves} $\Vc$, its rigidified version $\Vr$ and its good moduli space $\U$, then we will establish the relation between the respective cohomological Brauer groups. Through the section, we will always assume $d,r,g,n$ are integers such that $r\geq 1$, $g\geq 3$ and $n\geq 0$.

\begin{subsection}{The universal moduli stack $\Vc$.}

Recall that the moduli stack of smooth genus $g$ curves $\Mg$ is defined as the category fibered in groupoids over $(Sch/\mathbb C)$, which associates to any scheme $S$ the groupoid of objects $(C\to S,\sigma_1,\ldots,\sigma_n)$, where $C\to S$ is a family of smooth curves of genus $g$ and $\sigma_1,\ldots,\sigma_n$ are disjoint sections of the family.\\
It is known that this category is an irreducible noetherian smooth Deligne-Mumford stack of dimension $3g-3+n$. Moreover, if $n>2g+2$ it is a quasi-projective variety.

It admits a \emph{universal curve} $\pi: \Ci\rightarrow \Mg$, i.e. a stack $\Ci$ and a representable morphism $\pi$ with the property that for any morphism from a scheme $S$ to $\Mg$ associated to a pair $(C\rightarrow S,\sigma_1,\ldots,\sigma_n)$, there exists a morphism $C\rightarrow \Ci$ such that the diagram
$$
\xymatrix{
	C\ar[r]\ar[d] &\Ci\ar[d]^{\pi}\\
	S\ar[r] & \Mg}
$$
is cartesian. Moreover, the universal curve admits sections $s_i$ for $i=0,\ldots,n$, such that the pull-back of them, along the cartesian diagram above, coincides with the sections $\sigma_i$ of the family $C\to S$. 

\begin{defin}We denote by $\Vc$ the category fibered over $(Sch/k)$, which to any scheme $S$ associates the groupoid of objects $(C\to S, \sigma_1,\ldots,\sigma_n , E)$, where $(C\to S, \sigma_1,\ldots,\sigma_n )$ is a family of smooth $n$-marked curves of genus $g$ over $S$ and $E$ is an $S$-flat vector bundle over $C$ of degree $d$ and rank $r$. 
	
A morphism between two objects $(C\to S, \sigma_1,\ldots,\sigma_n, E)$, $(C'\to S, \sigma'_1,\ldots,\sigma'_n, E')$ is a pair $(\varphi,\psi)$, where $\varphi:C\cong C'$ is an isomorphism of curves preserving the markings, i.e. $\varphi(\sigma_i)=\sigma'_i$ for any $i=1,\ldots,n$, and $\psi:\varphi^*E'\cong E$ is an isomorphism of vector bundles over $C$.

There is a natural projection $\Vc \to \Mg$ given by forgetting the vector bundle.
\end{defin}

We have the following:

\begin{teo}\label{teo1}$\Vc$ is an irreducible smooth Artin stack of dimension $(r^2+3)(g-1)+n$. Furthermore, the locus of (semi)stable vector bundles is a quotient stack of a smooth quasi-projective variety by a suitable general linear group. Its complement in $\Vc$ has codimension at least two.
\end{teo}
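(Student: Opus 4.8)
\textbf{Proof proposal for Theorem \ref{teo1}.}

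The plan is to study $\Vc$ through the forgetful morphism $p\colon\Vc\to\Mg$, which presents it as the relative moduli stack of rank-$r$, degree-$d$ vector bundles on the fibres of the universal curve $\pi\colon\Ci\to\Mg$. First I would note that $p$ is algebraic (representable by algebraic stacks): this is the relative version of the classical fact that the stack of vector bundles of fixed rank and degree on a smooth projective curve is a smooth algebraic stack, obtained either by realising it as an open substack of the stack of coherent sheaves on $\Ci/\Mg$ or through the usual Quot-scheme presentation; since $\Mg$ is a noetherian Deligne--Mumford stack, $\Vc$ is then noetherian and algebraic. Smoothness and the dimension count follow from deformation theory on curves: the deformation--obstruction theory of a bundle $E$ on a curve $C$ is controlled by $R\Gamma(C,\mathcal{E}nd E)$, with $H^2(C,\mathcal{E}nd E)=0$, so $p$ is smooth of relative dimension $h^1(\mathcal{E}nd E)-h^0(\mathcal{E}nd E)=-\chi(\mathcal{E}nd E)=r^2(g-1)$; as $\dim\Mg=3g-3+n$, this gives $\dim\Vc=r^2(g-1)+3(g-1)+n=(r^2+3)(g-1)+n$. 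For irreducibility I would combine the classical irreducibility of each fibre $\mathrm{Bun}_{r,d}(C)$ with the elementary fact that a surjective open morphism onto an irreducible base with irreducible fibres has irreducible source; here $p$ is surjective because every smooth curve carries a bundle $L\oplus\mathcal{O}_C^{\oplus(r-1)}$ with $\deg L=d$, and open because it is smooth.

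For the quotient presentation of the (semi)stable locus I would run the standard relative GIT construction. Boundedness of the family of semistable bundles of the given invariants over all genus-$g$ curves lets me fix $m\gg 0$ such that, for every pair $(C,E)$ with $E$ semistable, $H^1(C,E\otimes\omega_C^{m})=0$, $E\otimes\omega_C^{m}$ is globally generated, and the $H^1$ groups controlling the obstructions on the relevant Quot scheme vanish; set $N=\chi(E\otimes\omega_C^{m})=d+rm(2g-2)+r(1-g)$. Inside the relative Quot scheme over $\Mg$ of quotients $\mathbb{C}^{N}\otimes\omega_\pi^{-m}\twoheadrightarrow E$, the locus $R$ where $E$ is a vector bundle, where $\mathbb{C}^N\to H^0(C,E\otimes\omega_C^{m})$ is an isomorphism, and where $E$ is (semi)stable is open, $\mathrm{GL}_N$-invariant and smooth over $\Mg$ (by the chosen vanishings), and the (semi)stable locus of $\Vc$ is canonically $[R/\mathrm{GL}_N]$. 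Choosing a presentation $\Mg=[H/\mathrm{GL}_M]$ with $H$ smooth quasi-projective --- for instance the $\mathrm{GL}_M$-frame bundle of $\pi_*\omega_\pi^{\nu}$ over a Hilbert scheme of $\nu$-pluricanonically embedded marked curves, $\nu\gg 0$ --- the base change $R\times_{\Mg}H$ is a smooth quasi-projective variety with a $\mathrm{GL}_N\times\mathrm{GL}_M$-action whose quotient stack is the (semi)stable locus of $\Vc$; the block embedding $\mathrm{GL}_N\times\mathrm{GL}_M\hookrightarrow\mathrm{GL}_{N+M}$ and the associated induced space (still smooth and quasi-projective, since $\mathrm{GL}$ is a special group) then present it as a quotient by a single general linear group.

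For the codimension bound I would use the Harder--Narasimhan stratification of $\Vc$ relative to $\Mg$. For a type $\tau=((r_1,d_1),\dots,(r_k,d_k))$ with strictly decreasing slopes $d_i/r_i$, the normal space to the corresponding stratum at a bundle with graded pieces $\mathrm{gr}_1,\dots,\mathrm{gr}_k$ is $\bigoplus_{i<j}\mathrm{Ext}^1(\mathrm{gr}_i,\mathrm{gr}_j)$; each $\mathrm{Hom}(\mathrm{gr}_i,\mathrm{gr}_j)$ with $i<j$ vanishes, since a nonzero map between semistable bundles cannot strictly increase the slope, so the codimension of the stratum equals $\sum_{i<j}\bigl(r_ir_j(g-1)+(r_jd_i-r_id_j)\bigr)$, and each summand is at least $(g-1)+1=g\geq 3$. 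Hence the unstable locus has codimension at least $2$ in $\Vc$. For the stable locus one must also bound the strictly semistable locus, which is covered by the images of the substacks of extensions $0\to F\to E\to G\to 0$ with $F,G$ semistable of the same slope as $E$; a Riemann--Roch computation gives such a substack codimension $r'(r-r')(g-1)\geq g-1\geq 2$ with $r'=\mathrm{rk}\,F$. Either way the complement of the relevant locus in $\Vc$ has codimension at least $2$.

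I expect the genuinely technical points to be the uniform boundedness over the non-proper base $\Mg$ --- this is what lets a single twist $m$ serve all genus-$g$ curves at once, and it underlies both the quotient presentation and the finite-typeness of the semistable locus --- together with the bookkeeping required to replace the product of general linear groups by a single one while keeping smoothness and quasi-projectivity. The deformation-theoretic smoothness, the irreducibility argument, and the Euler-characteristic codimension estimates are routine once this scaffolding is in place.
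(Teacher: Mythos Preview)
Your argument is correct, but it follows a genuinely different route from the paper's. The paper proceeds by induction on $n$: for $n=0$ it essentially quotes \cite{Fr16}, where the stack structure, the quotient presentation (via an explicit open cover $\{\mathcal U_k\}$ of $\mathcal V ec^d_{r,g,0}$ by global quotients $[\,\text{scheme}/\mathrm{GL}\,]$, one of which already contains the entire semistable locus once $d+kr>2rg$), and the codimension-two bound for the non-semistable locus are all established; the inductive step then observes that $\mathcal V ec^d_{r,g,n}$ is an open substack of the universal curve over $\mathcal V ec^d_{r,g,n-1}$, which is a representable, smooth, proper morphism of relative dimension one, so irreducibility, smoothness, the dimension count, the quotient presentation, and the codimension bound all propagate formally. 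You instead work uniformly in $n$ through the forgetful map $p\colon\Vc\to\Mg$, using relative deformation theory for smoothness and the dimension, irreducibility of the fibres $\mathrm{Bun}_{r,d}(C)$ for irreducibility of the total space, a direct relative Quot-scheme construction pulled back along a $\mathrm{GL}$-presentation of $\Mg$ for the quotient description, and an explicit Harder--Narasimhan and extension-locus Riemann--Roch count for the codimension estimate. Your approach has the virtue of being self-contained and of making the underlying geometry visible, at the price of the uniform-boundedness and $\mathrm{GL}_N\times\mathrm{GL}_M\hookrightarrow\mathrm{GL}_{N+M}$ bookkeeping you correctly flag as the genuine technical points; the paper's inductive approach is shorter precisely because the substantive $n=0$ case is outsourced to \cite{Fr16}.
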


\begin{proof}We set $\Vc:=\mt Vec_n$. We will give a proof by induction on $n$. When $n=0$, the first part is a consequence of \cite[Theorem 1.2.2]{Fr16}. Furthermore, in \emph{loc. cit.}, it has been shown that the stack $\mt Vec_0$ has an open covering $$\stackrel[k \in \mathbb{Z}]{}{\cup} \mt U_k \rightarrow \mt Vec_0,$$where $\mt U_k \subset \mt Vec_0$ is the subcategory of elements $(p:C\to S, E)$ such that
\begin{enumerate}[(i)]
\item $E(k) := E \otimes \omega_C^k$ is relatively generated by global sections, i.e. the canonical morphism $p^*p_*E(k)\to E(k)$ is surjective, and the induced morphism to the Grassmannian $C\to \text{Gr}(p_*E(k),r)$ is a closed embedding,
\item $R^ip_*E(k)=0$ for $i>0$.
\end{enumerate}
Each of the substacks $\mt U_k$ are quotient stacks of a smooth noetherian scheme by a suitable general linear group. By \cite[Subsection 1.1]{Sc01}, if $d+kr>2rg$, then $\mt U_k$ contains all (semi)stable objects. It is known that the (semi)stability is an open condition, so the (semi)stable objects describe an open in $\mt U_k$ for some $k$. And the open of a quotient stack is still a quotient stack, then we have the second assertion. Moreover, in the proof of \cite[Lemma 3.1.5]{Fr16}, it is shown that the locus of non (semi)stable bundles is a closed of codimension at least two in $\mt Vec_0$.

Assume now that the theorem holds for $\mt Vec_{n-1}^d$. Let $\mt C$ be the category fibered in groupoids parametrizing the objects $(C,x_1,\ldots,x_{n-1},y,E)$, where $(C,x_1,\ldots,x_{n-1},E)$ is an object of $\mt Vec_{n-1}$ and $y$ is any point in $C$. Adapting the proofs of \cite[Proposition 1.2.4]{Fr16} at the case of smooth curves with marked points, we see that $\mt C$ is a stack and its diagonal is representable, quasi-compact e separated. By definition, it has a natural map $\pi:\mt C\to \mt Vec_{n-1}$, which forgets the $n$-th point. It can be shown that such map is a representable morphism, furthermore it is smooth, proper, with geometrically integral fibres and of relative dimension $1$. This makes $\mt C$ an irreducible smooth Artin stack of dimension $(r^3+3)(g-1)+n$. Since $\mt Vec_n$ can be identified with the open subset of $\mt C$ of those objects where all the points are distinct, we have the first assertion. The other assertions follow by the inductive hypothesis and the representability of the forgetful map $\mt Vec_n\to \mt Vec_{n-1}$.
\end{proof}

\begin{rmk}\label{capo} In the case $r=1$, $\Vc$ is quasi compact and it is the so-called universal Jacobian over $\Mg$. According to the notation of \cite{MV}, we will set $\Jc:=\mathcal Vec^d_{1,g,n}$.
\end{rmk}

By pulling back the universal curve over $\Mg$, we obtain a \emph{universal curve} $\mt C\rightarrow \Vc$ (it is the auxiliary stack introduced in the proof of Theorem \ref{teo1}). It comes equipped with the \emph{universal sections} $s_1,\ldots,s_n$. Furthermore, the universal curve admits a \emph{universal vector bundle}, i.e. a coherent sheaf $\mt E$ over $\mt C$, flat over $\Vc$, such that for any morphism from a scheme $S$ to $\Vc$ associated to a pair $(C\rightarrow S,s_1,\ldots,s_n, E)$, the restriction of $\mt E$ to $C$ is isomorphic to $E$.

\end{subsection}
\begin{subsection}{The rigidified moduli stack $\Vr$.}

The group $\mathbb G_m$ is contained in a natural way in the automorphism group of any object of $\Vc$, as multiplication by scalars on the vector bundle. There exists a procedure for removing these automorphisms, called \emph{$\mathbb{G}_m$-rigidification} (see \cite[Section 5]{ACV}). We obtain an irreducible smooth Artin stack $\Vr:=\Vc\fatslash \mathbb{G}_m$ of dimension $(r^2+3)(g-1)+n+1$, with a surjective smooth morphism $\nud:\Vc\rightarrow\Vr$.

Observe that the above map endows $\Vc$ with a $\mathbb G_m$-gerbe structure over $\Vr$.  It is well-known that to any $\mathbb G_m$-gerbe $g:\mt Y\to\mt X$ over $\mt X$, we can associate a class $[g]$ in the cohomology group $H^2(\mt X,\mathbb G_m)$ (see \cite[IV, $\S$3.4-5]{Gi}). So in particular, the map $\nud$ defines a class $[\nud]$ in $H^2(\Vr,\mathbb G_m)$.\\

The rest of the subsection is devoted to prove the following:

\begin{teo}\label{kern}The kernel of the pull-back map
	$$
	\Br'(\nud):\Br'(\Vr)\to\Br'(\Vc),
	$$
induced by $\nud:\Vc\to\Vr$, is generated by the Brauer class $[\nud]$. Furthermore, its order is $\operatorname{gcd}(d+r(1-g),r( d+1-g), r(2g-2))$ if $n=0$ and $\operatorname{gcd}(r,d)$ if $n>0$.
\end{teo}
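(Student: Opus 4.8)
The plan is to translate the statement into a computation inside $\Pic(\Vc)$ by means of the Leray spectral sequence of the $\mathbb G_m$-gerbe $\nud$, and then to carry that computation out with tautological line bundles. Since $\Vc$ and $\Vr$ are smooth irreducible Artin stacks, the Leray spectral sequence $H^p(\Vr,R^q\nud_*\mathbb G_m)\Rightarrow H^{p+q}(\Vc,\mathbb G_m)$ has $\nud_*\mathbb G_m=\mathbb G_m$ and $R^1\nud_*\mathbb G_m\cong\underline{\mathbb Z}$ (the weight sheaf, whose stalk is $\Pic(B\mathbb G_m)=\mathbb Z$), so its exact sequence of low-degree terms reads
$$0\to\Pic(\Vr)\xrightarrow{\nud^*}\Pic(\Vc)\xrightarrow{\;w\;}\mathbb Z\xrightarrow{\;\cdot[\nud]\;}H^2(\Vr,\mathbb G_m)\xrightarrow{\nud^*}H^2(\Vc,\mathbb G_m),$$
where $w$ is the weight homomorphism and the differential $d_2$ sends $1\mapsto[\nud]$. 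Two things follow at once: the kernel of $\nud^*$ on $H^2(-,\mathbb G_m)$ is exactly $\mathbb Z\cdot[\nud]$, and the order of $[\nud]$ equals the positive generator of the subgroup $w(\Pic(\Vc))\subseteq\mathbb Z$. So it suffices to prove that $w(\Pic(\Vc))$ is generated by $\gcd\big(d+r(1-g),\,r(d+1-g),\,r(2g-2)\big)$ for $n=0$ and by $\gcd(r,d)$ for $n>0$; once $w$ is seen to be nonzero, $[\nud]$ is torsion, hence lies in $\Br'(\Vr)$ and the assertion about $\Br'$ follows.

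For the divisibility "$\operatorname{ord}[\nud]$ divides the claimed integer" I would exhibit tautological line bundles realizing the relevant weights. Let $\pi\colon\mathcal C\to\Vc$ be the universal curve, $\mathcal E$ the universal bundle — which has weight $1$ for the inertial $\mathbb G_m$ — and $s_1,\dots,s_n$ the universal sections. Since $\det R\pi_*$ of a weight-$k$ sheaf of relative Euler characteristic $\chi$ carries weight $k\chi$, the line bundles $\det R\pi_*\mathcal E$, $\det R\pi_*(\mathcal E\otimes\omega_\pi)\otimes(\det R\pi_*\mathcal E)^{-1}$ and $\det R\pi_*(\det\mathcal E)$ have weights $d+r(1-g)$, $r(2g-2)$ and $r(d+1-g)$ respectively (here $\det\mathcal E$ has weight $r$ and relative Euler characteristic $d+1-g$), and when $n>0$ one also has $\det s_1^*\mathcal E$ of weight $r$. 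As $\gcd\big(d+r(1-g),r(d+1-g),r(2g-2),r\big)=\gcd(r,d)$, this settles the divisibility — and the nonvanishing of $w$ — in both cases.

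The reverse divisibility, that $w(\Pic(\Vc))$ is no larger, is the crux. When $n>0$ it follows by restriction to a fibre of $\Vc\to\Mg$: such a fibre is $\mathrm{Bun}_{r,d}(C)$, the weight of a line bundle is preserved under pullback along the fibre inclusion, and it is known (from the computation of $\Pic(\mathrm{Bun}_{r,d}(C))$; see also \cite{BBGN} for the fixed-determinant analogue) that the least weight of a line bundle on $\mathrm{Bun}_{r,d}(C)$ is $\gcd(r,d)$, so that $w(\Pic(\Vc))\subseteq\gcd(r,d)\mathbb Z$. For $n=0$ this is too weak, because $\mathrm{Bun}_{r,d}(C)$ carries weight-one line bundles such as $\det E_x$ that do not globalise over $\Mg$ for want of a section; here one must instead invoke the explicit description of $\Pic(\Vc)$ from \cite{Fr16}: modulo $\pi^*\Pic(\Mg)$ and the weight-zero tautological classes, $\Pic(\Vc)$ is generated by finitely many determinant-of-cohomology bundles of the kind above, whose weights one checks have greatest common divisor $\gcd\big(d+r(1-g),r(d+1-g),r(2g-2)\big)$. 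I expect this last step — determining $w(\Pic(\Vc))$ on the nose, which rests on the global Picard-group computation and on a careful accounting of $\mathbb G_m$-weights through $R\pi_*$ and $\det$ — to be the main obstacle; the gerbe-theoretic reduction and the production of the explicit classes are routine by comparison.
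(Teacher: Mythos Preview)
Your reduction via the Leray spectral sequence is exactly the paper's, and for $n=0$ both you and the paper defer the computation of the image of the weight map to \cite{Fr16} (and \cite{MV} for $r=1$). The one genuine difference is in the lower bound for $n>0$: the paper does \emph{not} restrict to a fibre and invoke $\Pic(\mathrm{Bun}_{r,d}(C))$, but instead gives a short self-contained argument at a single polystable point $\eta=(C,x_1,\dots,x_n,F^{\oplus\gcd(r,d)})$ with $F$ stable of rank $r/\gcd(r,d)$ and degree $d/\gcd(r,d)$. There $GL_{\gcd(r,d)}\subset\Aut(\eta)$ acts on the fibre of any line bundle by a power of $\det$, so the central $\mathbb G_m$ acts with weight a multiple of $\gcd(r,d)$. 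Your fibre-restriction approach is valid too, but it outsources exactly this step; the paper's argument is what actually \emph{proves} the fact you are citing, and is arguably cleaner since it needs no global Picard computation at all.

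One small slip to fix: $\det E_x$ has $\mathbb G_m$-weight $r$, not weight one. Your parenthetical explanation of why the $n=0$ case is harder is therefore misphrased---the point is not that the fibre carries weight-one line bundles, but that the fibrewise minimum weight is $\gcd(r,d)$, which can be strictly smaller than $\gcd(d+r(1-g),r(d+1-g),r(2g-2))$; the missing constraint for $n=0$ comes from the fact that without a section the classes $\det\mathcal E_x$ do not globalise over $\Mg$. This does not affect your argument, only the commentary. (Incidentally, for the upper bound when $n>0$ the paper uses $d_\pi(\mathcal E\otimes\sigma_1)$ of weight $d+r(2-g)$ rather than your $\det s_1^*\mathcal E$ of weight $r$; either choice combines with $d+r(1-g)$ to give $\gcd(r,d)$.)
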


\begin{proof}We remove temporarily $r,d,g,n$ from the notation.	The Leray spectral sequence 
	\begin{equation}\label{leray}
	H^p(\mt V,R^q\nu_*\mathbb{G}_m) \Rightarrow H^{p+q}(\mt Vec,\mathbb G_m),
	\end{equation}
	induces an exact sequence in low degrees
	$$
	0\rightarrow H^1(\mt V,\nu_*\mathbb G_m)\rightarrow  H^1(\mt Vec,\mathbb G_m)\rightarrow H^0(\mt V,R^1\nu_*\mathbb G_m)\rightarrow H^2(\mt V,\nu_*\mathbb G_m)\to H^2(\mt Vec,\mathbb G_m).
	$$
	We observe that $\nu_*\mathbb{G}_m=\mathbb G _m$ and that the sheaf $R^1\nu_*\mathbb G_m$ is the constant sheaf $H^1(\mt B\mathbb G_m,\mathbb G_m)\cong \Pic(\mt B\mathbb G_m)\cong \mathbb Z$. Via standard cocycle computation we see that exact sequence becomes
	\begin{equation}\label{lss}
	0\longrightarrow \Pic(\mt V)\longrightarrow \Pic(\mt Vec)\xrightarrow{res} \mathbb{Z}\xrightarrow{obs} H^2(\mt V,\mathbb G_m)\xrightarrow{\nu^*} H^2(\mt Vec,\mathbb G_m),
	\end{equation}
	where $res$ is the restriction on the fibers (it coincides with the weight map defined in \cite[Def. 4.1]{H07}), $obs$ is the map which sends the identity to the Brauer class $[\nu]\in H^2(\mt V,\mathbb G_m)$.
Observe that the map in the statement is the restriction of $\nu^*$ to the torsion elements.

When $n=0$, the theorem follows from the computation of the image of $res$ in \cite[Corollary 6.10(i)]{MV} when $r=1$ and \cite[Corollary 3.3.1]{Fr16} when $r>1$.

Assume now that $n>0$. We are going to compute the image of $res$. First of all, we recall, following \cite[Section 4]{H07}, how this homomorphism is defined. Fix a line bundle $\mathcal L$ on $\Vc$, the automorphism group of an object $\eta$ in $\Vc(\mathbb C)$ acts on the fiber $\mathcal L_{\eta}$. The group $\mathbb G_m$ is contained in the automorphisms group of any object in $\Vc$ as multiplication by scalars on the vector bundle. So, by restriction $\mathbb G_m$ acts on the line bundle $\mathcal L$. By the representation theory of the group scheme $\mathbb G_m$ (here we see $\mathbb G_m$ as scheme over $\Vc$), we see that it acts on each fiber by a cocharacter $t\to t^k$ for $k\in\mathbb Z$. The homomorphism $res$ is defined by sending $\mathcal L$ to $k$.

We first prove that $\operatorname{gcd}(r,d)$ divides $k$. Let $\eta:=(C,x_1,\ldots,x_n,F^{\oplus\operatorname{gcd}(r,d)})$ be a complex point in $\Vc$, where $F$ is a stable (and then simple) vector bundle of rank $r/\operatorname{gcd}(r,d)$ and degree $d/\operatorname{gcd}(r,d)$. In particular, we have an exact sequence of algebraic groups
$$
1\to GL_{\operatorname{gcd}(r,d)}\to \text{Aut}(\eta)\to \text{Aut}(C)
$$
Furthermore, the subgroup of multiplication by scalars is contained on the left-hand side as the center $\mathbb G_m\cdot \text{Id}$. Observe that $GL_{\operatorname{gcd}(r,d)}$ acts on $L_{\eta}$ by the cocharacter $A\mapsto \det(A)^m$ for some integer $m$. So, the induced action of $\mathbb G_m$ is $t\to t^{\operatorname{gcd}(r,d)m}$. Thus, $k$ is a multiple of $\operatorname{gcd}(r,d)$.

To conclude the proof, it is enough to exhibit a line bundle with weight $\operatorname{gcd}(r,d)$. Consider the universal pair
$$
(\pi:\mt C\to \Vc, s_1,\ldots, s_n,\mt E).
$$
Following \cite[Subsection 2.2]{Fr16}, we denote by $d_\pi(\mt E)$, resp. $d_\pi(\mt E\otimes \sigma_1)$, the determinant of cohomology of $\mt E$, resp. $\mt E\otimes\sigma_1$, with respect to the family $\pi$. They are line bundles on $\Vc$ and by the same arguments of the proof of [\emph{loc. cit.}, Lemma 3.3.1]:
$$
res(d_\pi(\mt E))=d+r(1-g)\text{ and } res(d_\pi(\mt E\otimes\sigma_1))=d+r(2-g).
$$
By the fact the $\operatorname{gcd}(d+r(1-g),d+r(2-g))=\operatorname{gcd}(r,d)$, there exists an integral combination of these two line bundles such that its weight is exactly $\operatorname{gcd}(r,d)$. 
\end{proof}

\end{subsection}

\begin{subsection}{The good moduli space $\U$.}

The rigidified stack $\Vr$ admits a good moduli space $\U$. Our aim is to show that the cohomological Brauer groups of the rigidified stack $\Vr$ and of the smooth locus of the moduli variety of semistable vector bundles are isomorphic. Before doing this, we collect some facts about the moduli space and its smooth locus.

\begin{defin}Let $(C,x_1,\ldots,x_n)$ be an $n$-marked smooth curve and $E$, $F$ two semistable vector bundles on $C$. We say that $(C,x_1,\ldots,x_n,E)$ and $(C,x_1,\ldots,x_n,F)$ are \emph{aut-equivalent} if the Jordan-Holder factors of $E$ and $F$ differ by an automorphism of the marked curve.
\end{defin}

We remark that when $n>2g+2$, the automorphisms group $\text{Aut}(C,x_1,\ldots,x_n)$ of a marked curve is trivial. So, in this range the aut-equivalence relation coincide with the classical $S$-equivalence relation. 

\begin{teo}\label{goodmod}The open substack in $\Vr$ of semistable vector bundles admits an irreducible normal quasi-projective variety $\U$ as good moduli space. Points on this variety are in bijection with aut-equivalence classes of objects $(C,x_1,\ldots,x_n,E)$. \end{teo}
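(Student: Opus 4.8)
The plan is to construct $\U$ as a GIT quotient, following the standard approach for moduli of semistable sheaves but adapted to the situation where the curve varies. First I would pick an integer $k$ large enough (depending only on $r,d,g$) so that for every semistable bundle $E$ of rank $r$ and degree $d$ on a smooth genus $g$ curve $C$, the twist $E(k) = E\otimes\omega_C^k$ is globally generated with vanishing higher cohomology; set $N := \chi(E(k)) = d + kr\cdot(2g-2)/(\ldots)$ — more precisely $N = d + r(2k(g-1) + 1 - g)$ by Riemann--Roch. This is exactly the condition defining the open substack $\mt U_k$ from Theorem \ref{teo1}, and by that theorem $\mt U_k$ contains all semistable objects and is a quotient stack $[R/GL_N]$ where $R$ is a smooth quasi-projective variety: $R$ parametrizes, roughly, tuples $(C,\sigma_1,\ldots,\sigma_n)$ together with a quotient $\mathcal O^N \twoheadrightarrow E(k)$ on $C$ inducing an isomorphism on $H^0$. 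Restricting to the semistable locus $R^{ss}\subset R$, which is $GL_N$-invariant and open, we get the semistable substack $\Vr^{ss} = [R^{ss}/GL_N]\fatslash\mathbb G_m = [R^{ss}/PGL_N]$ (the scalars act trivially on the projectivized data, or one passes to $SL_N$ in the usual way).

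Next I would invoke Geometric Invariant Theory: the action of $PGL_N$ (or $SL_N$) on $R^{ss}$ admits a natural $GL_N$-linearized ample line bundle — the standard one built from the Grassmannian/Quot-scheme embedding combined with the Hodge bundle and tautological classes on $\Mg$ — for which the GIT-semistable points coincide with $R^{ss}$ (this is the content of the stability comparison, as in Simpson, Pandharipande, Schmitt, and in the marked-curve case handled in the references already cited, e.g.\ \cite{Sc01}). Then $\U := R^{ss}/\!\!/ PGL_N$ exists as a quasi-projective variety and is a good moduli space for $\Vr^{ss}$ in the sense of Alper; good moduli spaces are unique, so $\U$ is independent of the auxiliary choice of $k$. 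Irreducibility of $\U$ follows from irreducibility of $\Vc$ (Theorem \ref{teo1}) hence of $R^{ss}$, since good moduli space morphisms are surjective with connected fibres. Normality follows because $R^{ss}$ is smooth — hence normal — and the GIT quotient of a normal variety is normal. Quasi-projectivity is built into the GIT construction.

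Finally I would identify the closed points of $\U$. By Alper's theory, closed points of the good moduli space correspond to closed points of $\Vr^{ss}$, i.e.\ to isomorphism classes of pairs $(C,x_1,\ldots,x_n,E)$ with closed orbit; and the closed-orbit objects are exactly the polystable bundles (direct sums of stable bundles of the same slope) on a fixed marked curve, since degenerating to the associated graded is realized inside the orbit closure. Two semistable pairs map to the same point iff their polystabilizations are isomorphic as marked-curve-plus-bundle data, i.e.\ iff there is an isomorphism of the underlying marked curves carrying $\mathrm{gr}(E)$ to $\mathrm{gr}(F)$ — which is precisely the aut-equivalence relation of the Definition above. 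This gives the bijection on points.

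The main obstacle is the GIT stability comparison: one must verify that the GIT-(semi)stable locus for the chosen linearization on $R$ is exactly the locus $R^{ss}$ of sheaf-theoretically semistable points, \emph{uniformly as the curve varies}. Over a fixed curve this is classical (Simpson), but allowing the curve to move requires the linearization to interact correctly with the Hodge bundle on $\Mg$, and one must rule out pathological destabilizations coming from the curve direction; this is handled in Schmitt's and Pandharipande's constructions and I would cite those rather than reprove it. A secondary technical point is checking the compatibility of the $\mathbb G_m$-rigidification with the GIT quotient — that rigidifying and then taking the good moduli space agrees with replacing $GL_N$ by $PGL_N$ — which follows formally from the universal property of rigidification together with the fact that the central $\mathbb G_m$ acts trivially on $\U$.
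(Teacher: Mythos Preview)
Your proposal is correct and is essentially an expanded version of the paper's own proof, which consists only of citations: \cite[Theorem 4.3.7]{HL} when $\Mg$ is already a scheme ($n>2g+2$), Pandharipande \cite{P96} for $n=0$, and the remark that the intermediate cases $1\le n\le 2g+2$ follow by the same strategy with minor changes. Your GIT sketch is precisely what those references carry out, and your identification of closed points via polystabilization correctly recovers the aut-equivalence relation.
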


\begin{proof}When $\Mg$ is a variety (i.e. $n>2g+2$), it follows by a general result on existence of relative moduli spaces (see \cite[Theorem 4.3.7]{HL}). When $n=0$, a proof can be found on \cite{P96}. The same strategy, with minor changes, give the result for $1\leq n\leq 2g+2$. 
\end{proof}

We want to study the smooth locus of $\U$. 
\begin{defin}\label{regst}Let $E$ be a vector bundle over a marked smooth curve $(C,x_1,\ldots,x_n)$. We say that $(C,x_1,\ldots,x_n,E)$ is \emph{regularly stable} if $E$ is stable and $\text{Aut}(C,x_1,\ldots,x_n,E)=\mathbb G_m$.
\end{defin}
If $n>2g+2$, a vector bundle is stable if and only if is regularly stable.
\begin{lem}\label{cane} The subset $\mt S^d_{r,g,n}\subset\Vr$ of objects such that $\text{Aut}(E)\neq\text{Aut}(C,x_1,\ldots,x_n, E)$ is a closed substack of codimension at least two.
\end{lem}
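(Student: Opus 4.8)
\emph{Proof plan.}\quad The plan is to reduce the codimension bound to a fibrewise dimension count over $\Mg$. Write $q\colon\Vc\to\Mg$ for the morphism forgetting the vector bundle; by Theorem \ref{teo1} together with the standard equality $\dim\Mg=3g-3+n$ we have $\dim\Vc=\dim\Mg+r^2(g-1)$, and since the deformation theory of a vector bundle on a smooth curve is unobstructed $q$ is smooth, with fibre over $[C]=[(C,x_1,\dots,x_n)]$ the irreducible moduli stack $\mathrm{Bun}^d_r(C)$ of rank-$r$ degree-$d$ bundles on $C$, of dimension $r^2(g-1)$. Since $\mathbb G_m\subseteq\Aut(E)$ for every object of $\Vc$, neither $\mathcal S:=\mathcal S^d_{r,g,n}$ nor its codimension is affected by the $\mathbb G_m$-rigidification, so I would work with the preimage of $\mathcal S$ in $\Vc$, still denoted $\mathcal S$; unwinding the definitions, a geometric point $(C,x_1,\dots,x_n,E)$ lies in $\mathcal S$ exactly when $\phi^*E\cong E$ for some $\phi\neq\mathrm{id}$ in $\Aut(C,x_1,\dots,x_n)$. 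That $\mathcal S$ is closed I would deduce from the upper semicontinuity of automorphism groups in families, as in the analogous statement of \cite{Fr16}; the real content is the codimension estimate.

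The first step is to bound the fibres of $q|_{\mathcal S}$. Fix $[C]$ with $\Aut(C,x_1,\dots,x_n)\neq 1$ and a non-trivial $\phi$ in it. Taking determinants of an isomorphism $\phi^*E\cong E$ shows that $\{E\in\mathrm{Bun}^d_r(C):\phi^*E\cong E\}$ is contained in the preimage, under the smooth surjective determinant morphism $\det\colon\mathrm{Bun}^d_r(C)\to\Pic^d(C)$, of $\{L\in\Pic^d(C):\phi^*L\cong L\}$. When nonempty, the latter is a torsor under $\ker\bigl(\phi^*-\mathrm{id}\colon\mathrm{Jac}(C)\to\mathrm{Jac}(C)\bigr)$, and $\phi^*$ acts non-trivially on $\mathrm{Jac}(C)$ since for $g\geq 2$ the homomorphism $\Aut(C)\to\Aut(\mathrm{Jac}(C))$, $\phi\mapsto\phi^*$, is injective (Torelli's theorem, or a direct Abel--Jacobi argument). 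Hence $\{L:\phi^*L\cong L\}$ has codimension $\geq 1$ in $\Pic^d(C)$, so $\{E:\phi^*E\cong E\}$ has codimension $\geq 1$ in $\mathrm{Bun}^d_r(C)$, i.e.\ dimension $\leq r^2(g-1)-1$. As $\Aut(C,x_1,\dots,x_n)$ is finite, the fibre of $q|_{\mathcal S}$ over $[C]$ — a finite union of such loci — also has dimension $\leq r^2(g-1)-1$.

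To finish, $q(\mathcal S)$ is contained in the closed substack $\mathcal M^{\mathrm{sp}}_{g,n}\subseteq\Mg$ of $n$-marked curves admitting a non-trivial automorphism; as $g\geq 3$, the generic such curve has trivial automorphism group, so $\mathcal M^{\mathrm{sp}}_{g,n}$ is a \emph{proper} closed substack of the irreducible stack $\Mg$ and hence $\dim\mathcal M^{\mathrm{sp}}_{g,n}\leq\dim\Mg-1$. Feeding this and the fibre bound into the fibre-dimension inequality gives
\[\dim\mathcal S\ \leq\ (\dim\Mg-1)+\bigl(r^2(g-1)-1\bigr)\ =\ \dim\Vc-2,\]
which is the assertion. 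The delicate point — the main obstacle — is the non-triviality of $\phi^*$ on $\mathrm{Jac}(C)$ for \emph{every} non-trivial $\phi$, notably for the hyperelliptic involution, which acts trivially on the canonical image $C\to\mathbb P^{g-1}$ but by $-1$ on $\mathrm{Jac}(C)$; this is where the hypothesis $g\geq 2$ enters, and $g\geq 3$ is needed on top so that $\mathcal M^{\mathrm{sp}}_{g,n}$ is a proper substack. (In particular the genuinely borderline case is $(g,n)=(3,0)$, where $\mathcal M^{\mathrm{sp}}_{3,0}$ — the hyperelliptic locus — has codimension only one in $\mathcal M_3$; it survives precisely because the fibre estimate contributes the missing unit of codimension.)
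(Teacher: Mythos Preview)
Your argument is correct and follows essentially the same route as the paper. Both proofs reduce the codimension bound via the determinant: the paper observes that $\mathcal S^d_{r,g,0}\subseteq\det^{-1}(\mathcal S^d_{1,g,0})$ under the global determinant morphism $\Vr\to\mathcal V^d_{1,g,n}$ and then cites \cite[Lemma 3.2.2]{Fr16} for the rank-one case, whereas you carry out the same reduction fibrewise over $\Mg$ and unpack the rank-one statement directly (the fixed locus of $\phi^*$ on $\Pic^d(C)$ is a proper subvariety because $\Aut(C)$ acts faithfully on $H^1(C,\mathbb Z)$ for $g\geq 2$). Your version is thus more self-contained, at the cost of redoing what \cite{Fr16} already records; the paper's version is shorter but relies on that external reference. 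Your handling of closedness (semicontinuity of the automorphism group scheme) is also what the paper does, only phrased there via the quotient group scheme $\mathcal G\to\Vr$ being unramified.
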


\begin{proof}We first show that $\mt S^d_{r,g,n}$ is closed. We have an exact sequence of groups over $\Vr$:
$$
1\to\text{Aut}_{\oo_{\Vr}}(\mt E)\to \text{Aut}_{\oo_{\Vr}}(\mt C,\sigma_1,\ldots,\sigma_n, \mt E)\to \mt G\to 0,
$$
where the first, resp. second, group is the group over $\Vr$ of the rigidified isomorphisms of the universal vector bundle $\mt E$, resp. the universal object $(\mt C,\sigma_1,\ldots,\sigma_n, \mt E)$. Since the automorphism group of marked smooth curve is finite, the map $p:\mt G\to\Vr$ is unramified. By definition, the stack $\mt S^d_{r,g,n}$ is the locus in $\Vr$ where $p$ is not isomorphism, then it is closed.

For proving the fact about the codimension, it is enough to show the lemma when $n=0$. The case $r=1$ has been proved in \cite[Lemma 3.2.2]{Fr16}. The higher rank case follows from
$
\mt S^d_{r,g,0}\subseteq\det^{-1}(\mt S^d_{1,g,0}),
$
where $\det:\Vr\to \mt V^d_{1,g,n}$ is the determinant map.
\end{proof}

\begin{prop}\label{smoothloc}The smooth locus $\Us$ of $\U$ is the locus of regularly stable objects.
\end{prop}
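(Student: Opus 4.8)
The plan is to establish the two inclusions separately.

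For the inclusion of the regularly stable locus into $\Us$: let $\mathcal{V}^{\mathrm{rs}}\subseteq\Vr$ be the locus of regularly stable objects. It is open, since stability is an open condition and, by Lemma \ref{cane}, the locus where $\Aut(E)\neq\Aut(C,x_1,\dots,x_n,E)$ is closed. By construction every object of $\mathcal{V}^{\mathrm{rs}}$ has trivial automorphism group in $\Vr$ (a stable bundle carries only the scalar automorphisms, which have been rigidified away, and no extra automorphism survives from the curve), so $\mathcal{V}^{\mathrm{rs}}$ is an algebraic space, smooth because $\Vr$ is. Finally the good moduli space morphism $\phi\colon\Vr^{ss}\to\U$ restricts over $\mathcal{V}^{\mathrm{rs}}$ to an isomorphism onto an open subscheme of $\U$: the fibre of $\phi$ over the class of a stable bundle is the single point it represents, so $\phi^{-1}(\phi(\mathcal{V}^{\mathrm{rs}}))=\mathcal{V}^{\mathrm{rs}}$, and a good moduli space morphism from an algebraic space with trivial stabilisers is an isomorphism. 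Hence $\phi(\mathcal{V}^{\mathrm{rs}})$ is an open subscheme of $\U$ isomorphic to the smooth space $\mathcal{V}^{\mathrm{rs}}$, so it is contained in $\Us$.

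For the reverse inclusion I would show that $\U$ is singular at every point outside $\phi(\mathcal{V}^{\mathrm{rs}})$. Using the presentation of $\Vr^{ss}$ as a quotient of a smooth quasi-projective variety by a general linear group from Theorem \ref{teo1}, together with Luna's \'etale slice theorem (equivalently, the \'etale-local structure of good moduli spaces), around the class $u=\phi(x)$ of a polystable object $x$ with reductive automorphism group $G_x$ (taken modulo the rigidified $\mathbb G_m$), the variety $\U$ is \'etale-locally at $u$ isomorphic to the affine GIT quotient $T/\!\!/ G_x$ at the origin, where $T$ is the deformation space of $x$ --- smooth, hence a $G_x$-representation. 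It therefore suffices to prove that $T/\!\!/ G_x$ is singular at the origin whenever $x$ is not regularly stable, i.e. whenever $G_x\neq 1$. If $x$ is strictly semistable, then $G_x$ contains a nontrivial torus coming from the decomposition of the associated polystable bundle into its stable summands; decomposing $T$ into weight spaces for a suitable $\mathbb G_m\subseteq G_x$ and using that for non-isomorphic stable summands $E_i,E_j$ of the same slope one has $h^1(C,\mathcal{H}om(E_i,E_j))=\operatorname{rk}(E_i)\operatorname{rk}(E_j)(g-1)\geq 2$ by Riemann--Roch (this is where $g\geq 3$ enters), one sees that $T/\!\!/\mathbb G_m$ --- and hence $T/\!\!/ G_x$ --- contains an affine cone over a Segre product of two projective spaces of positive dimension, so it is singular at the origin. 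If instead $x=(C,x_1,\dots,x_n,E)$ with $E$ stable but $\Gamma:=\Aut(C,x_1,\dots,x_n,E)/\mathbb G_m$ a nontrivial finite group, then by the Chevalley--Shephard--Todd theorem $T/\Gamma$ is smooth at the origin precisely when the image of $\Gamma$ in $\operatorname{GL}(T)$ is generated by pseudo-reflections; to rule this out I would analyse the $\Gamma$-equivariant, and (over $\mathbb C$) split, exact sequence
$$
0\longrightarrow H^1(C,\mathcal{E}nd\,E)\longrightarrow T\longrightarrow H^1\bigl(C,T_C(-x_1-\dots-x_n)\bigr)\longrightarrow 0
$$
induced by the smooth projection $\Vr\to\Mg$, on whose right-hand term $\Gamma$ acts as automorphisms of the marked curve act on the tangent space of $\Mg$, and on whose left-hand term $\Gamma$ acts through a choice of isomorphisms $\gamma^*E\cong E$. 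For $n=0$ the fact that this action is never generated by pseudo-reflections when $g\geq 3$ is exactly the content of the corresponding results in \cite{Fr16} (and of \cite{MV} when $r=1$); the marked case $n>0$ follows by the same computation, the extra marked points only enlarging the $\Gamma$-fixed part of the curve-deformation summand while leaving the bundle summand unchanged.

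I expect the main difficulty to lie in this last case, the stable bundle on a curve with automorphisms: one must verify that the finite group $\Gamma$ never acts on the whole of $T$ as a group generated by pseudo-reflections. This genuinely uses the bound on the genus --- a hyperelliptic involution, for instance, can act on the tangent space of $\Mg$ alone as a pseudo-reflection --- and it is here that the argument leans on the fixed-curve, $n=0$ analysis of \cite{Fr16} and \cite{MV}; by contrast the reductions carried out along the way (removing the torus in the strictly semistable case, reducing the number of markings in the finite case) are routine.
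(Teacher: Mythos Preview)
Your treatment of the stable-but-not-regularly-stable case is essentially the paper's argument in different clothing. The paper also passes to the versal deformation $B$ and the finite quotient $B\to B/(\Aut/\mathbb G_m)$, and then invokes \cite[Lemma~4.4]{NR}, which says precisely that any component of the non-free locus of codimension $\ge 2$ lies in the singular locus of the quotient --- equivalently (via Chevalley--Shephard--Todd), that no nontrivial element of $\Gamma$ acts as a pseudo-reflection. In both approaches the only input is the codimension bound of Lemma~\ref{cane}; what you attribute to \cite{Fr16} and \cite{MV} is that bound, not a separate pseudo-reflection computation, so your last paragraph overstates the difficulty of this step.

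The genuine gap is in the strictly semistable case. Your argument only explicitly treats a polystable bundle with at least two non-isomorphic stable summands, and even there you pass from ``$T/\!\!/\mathbb G_m$ is singular at the origin'' to ``hence $T/\!\!/G_x$ is singular'' without justification. This implication fails in general: a further reductive quotient can resolve a singularity, and for $E=F^{\oplus k}$ one has $G_x=\mathrm{PGL}_k$, in which your chosen $\mathbb G_m$ is not normal, so there is no factorisation $T/\!\!/G_x=(T/\!\!/\mathbb G_m)/\!\!/(G_x/\mathbb G_m)$ to exploit. Completing your line would require a direct tangent-space or invariant-ring computation for $T/\!\!/G_x$ at the origin. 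The paper sidesteps all of this: by generic smoothness of the forgetful map $\Us\to M_{g,n}$ it reduces to a fibre over a curve with trivial automorphism group, where the statement becomes the classical fact that the smooth locus of the moduli space of semistable bundles on a fixed curve is exactly the stable locus.
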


\begin{proof}We remove the markings from the notation. Set $S:=Sing(\U)$. The variety $\U$ and the stack $\Vr$ are isomorphic along the locus of regularly stable objects. Since $\Vr$ is smooth, if $(C,E)$ is regularly stable, then $\U$ is smooth at $[(C,E)]$.
It remains to show the following:
\begin{enumerate}[(i)]
\item if $(C,E)$ is stable but non regularly stable, then $[(C,E)]\in S$,
\item if $(C,E)$ is polystable, then $[(C,E)]\in S$.
\end{enumerate}

We first prove (i). By openness of the stable locus, we can restrict to prove the assertion along the open subset in $\U$ of stable vector bundles. With abuse of notation, we denote this open by $V$, the corresponding singular locus by $S$ and  the locus of non regularly stable objects by $N$. 
So, proving (i) is equivalent to show $S=N$.\\
By the above discussions, we already know $S\subset N$. We will show the other inclusion. Let $(\mt C,\mt E)\to B$ be the versal deformation of a stable but non-regularly stable object $(C,E)$. Since the stack $\Vr$ is smooth, also $B$ is smooth. The group $\text{Aut}:=\text{Aut}(C,E)$ acts on $B$ and it can be shown that subgroup $\mathbb G_m\subset \text{Aut}$ acts trivially on $B$. Since the object is stable, the group $\text{Aut}$ is linearly reductive. By \cite[Lemma 1.4.4]{Fr16} (it is stated for $n=0$, but it holds with the same proof even if we consider the markings), the quotient $B/\text{Aut}$ is isomorphic to an \'etale neighbourhood of $[(C,E)]$ in $V$. The result in \emph{loc.cit.} is formulated stack-theoretically, the same argument works for the schematic quotient. Furthermore, the schematic quotient map
$$
B\to B/\text{Aut}=B/\left(\text{Aut}/\mathbb G_m\right),
$$
is a branched covering, because $\text{Aut}/\mathbb G_m\subset \text{Aut}(C)$ is a finite group.  It is unramified outside of the locus of objects with non-trivial automorphisms. By \cite[Lemma 4.4]{NR}, any component of $N$ of codimension at least two is contained in the singular locus of $V$. By Lemma \ref{cane}, $\text{cod}(V,N)\geq 2$,  thus $N\subset S$.\vspace{0.1cm}

We now prove (ii). Let $\Us:=\U\setminus S$ be the smooth locus of $\U$. Consider the forgetful functor $p:\Us\to M_{g,n}$. By generic smoothness, there exists an open $M^0_{g,n}\subset M_{g,n}$ such that $p:p^{-1}(M^0_{g,n})\to M^0_{g,n}$ is smooth. Without loss of generality, we can assume that if $[C]\in M^0_{g,n}$, then $C$ has no non-trivial automorphisms.\\
The locus $Q$ of non-stable objects $(C,E)$, such that $[C]\in M^0_{g,n}$, is dense in the entire locus of non-stable objects in $\U$. So, it is enough to show $Q\subset S$. The fibre of $p$ over $[C]\in M^0_{g,n}$ is isomorphic to the smooth locus of the moduli space of semistable vector bundles of rank $r$ and degree $d$ over $C$. It is known that it coincides with the locus of stable bundles, so $Q\cap\Us\empty$ must be empty, i.e. $Q\subset S$.
\end{proof}

\begin{teo}\label{stack=coarse} We have a canonical isomorphism
$$
\Br'(\Vr)\cong \Br'(\Us)
$$
of cohomological Brauer groups.
\end{teo}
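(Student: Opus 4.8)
The plan is to realise $\Us$ as an open substack of the smooth stack $\Vr$ whose complement has codimension at least two, and then to conclude with the invariance result Proposition \ref{opengeq2}.

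The first step is to identify $\Us$ with an open substack of $\Vr$. By Proposition \ref{smoothloc}, $\Us$ is exactly the regularly stable locus of $\U$, and — as recorded in the proof of that proposition — $\Vr$ and $\U$ are isomorphic over the regularly stable locus. Since being regularly stable is an open condition (stability is open, and the complement of $\mt S^d_{r,g,n}$ is open by Lemma \ref{cane}), this exhibits $\Us$ as canonically isomorphic to the open substack $\mathcal V^{rs}\subseteq\Vr$ of regularly stable objects; in particular $\Br'(\Us)\cong\Br'(\mathcal V^{rs})$.

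The second step is to bound the codimension of $\Vr\setminus\mathcal V^{rs}$. Since a stable bundle is simple we have $\Aut(E)=\mathbb G_m$ whenever $E$ is stable, so by the definition of $\mt S^d_{r,g,n}$ an object is regularly stable precisely when $E$ is semistable, $E$ is stable, and the object does not lie in $\mt S^d_{r,g,n}$. Hence $\Vr\setminus\mathcal V^{rs}$ is the union of three closed substacks of $\Vr$: (a) the non-semistable locus, which has codimension at least two by Theorem \ref{teo1}; (b) the strictly semistable locus inside the semistable open substack $\mathcal V^{ss}\subseteq\Vr$; and (c) $\mt S^d_{r,g,n}$, which has codimension at least two by Lemma \ref{cane}. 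For (b) I would carry out the standard dimension count on the Jordan--H\"older strata of the semistable locus over a fixed curve: the shallowest stratum, consisting of extensions of two stable bundles of slope $d/r$, already has codimension at least three when $g\geq 3$, and the deeper strata have even larger codimension; since the morphism $\mathcal V^{ss}\to\Mg$ is smooth and the strictly semistable locus is a union of such fibrewise-defined loci, (b) has codimension at least two in $\Vr$ as well. Combining (a), (b) and (c), the complement $\Vr\setminus\mathcal V^{rs}$ has codimension at least two, so Proposition \ref{opengeq2} yields an isomorphism $\Br'(\Vr)\xrightarrow{\sim}\Br'(\mathcal V^{rs})$; composing it with the isomorphism of the first step proves the theorem.

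The main obstacle I anticipate is step (b): performing the codimension estimate for the strictly semistable locus and, in particular, checking carefully that it transfers from the fixed-curve moduli spaces to the relative stack $\mathcal V^{ss}$, so that the hypothesis $g\geq 3$ is genuinely what is being used. Everything else is either already established in this section or is a formal consequence of the invariance statements of Section \ref{prel}.
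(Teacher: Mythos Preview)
Your approach is exactly the paper's: identify $\Us$ with the regularly stable open substack of $\Vr$ via Proposition \ref{smoothloc}, show its complement has codimension at least two, and apply Proposition \ref{opengeq2}. The only difference is that you split the complement into three pieces (a), (b), (c), whereas the paper uses two: the non-stable locus and $\mt S^d_{r,g,n}$.

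Your extra step (b), and the dimension count you flag as the main obstacle, is unnecessary. In the statement of Theorem \ref{teo1} the parenthetical ``(semi)stable'' is the standard convention meaning the assertion holds simultaneously for the semistable locus \emph{and} for the stable locus; in particular the complement of the \emph{stable} locus already has codimension at least two in $\Vc$ (and hence in $\Vr$), not merely the complement of the semistable locus. This single citation absorbs both your (a) and (b), so no separate Jordan--H\"older stratification argument is needed.
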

\begin{proof}The variety $\U$ and the stack $\Vr$ are isomorphic along the locus of regularly stable objects, which coincides with $\Us$, by Proposition \ref{smoothloc}. In particular, the restriction induces a homomorphism of cohomological Brauer groups
$$
\Br'(\Vr)\to\Br'(\Us).
$$
If we show that the complement $\Vr\setminus \Us$ has codimension at least two, by Lemma \ref{opengeq2}, we have the theorem. By Proposition \ref{smoothloc}, this stack is the union of two closed substacks in $\Vr$:
\begin{enumerate}[(i)]
\item the stack $\mt S^d_{r,g,n}$ of objects such that $\text{Aut}(E)\neq\text{Aut}(C,x_1,\ldots,x_n,E)$,
\item the stack of non-stable vector bundles.
\end{enumerate}
The first one has codimension 2 by Lemma \ref{cane} and the second one by Theorem \ref{teo1}.
\end{proof}

The next result gives us a new representative for the Brauer class of the gerbe as a Brauer class of a projective bundle over $\Us$.
\begin{prop}\label{generatorproj}For $k$ big enough, there exists a projective bundle $\mathbb P(k)$ over $\Us$ satisfying the following two properties
\begin{enumerate}[(i)]
	\item the fibre over a point $[(C,x_1,\ldots,x_n,E)]\in\Us$ is canonically isomorphic to $\mathbb P (H^0(C,E\otimes \omega_C^k))$,
	\item its Brauer class $[\mathbb P(k)]$ is equal to the Brauer class $[\nud]$ of the gerbe restricted over $\Us$.
\end{enumerate}
Moreover, if $d\geq r(2g-1)$, we can choose $k=0$. 
\end{prop}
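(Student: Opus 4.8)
The plan is to construct $\mathbb P(k)$ as the projectivization of a suitable pushforward of the universal vector bundle, and then to identify its Brauer class with $[\nud]$ by a weight computation analogous to the one in the proof of Theorem \ref{kern}. Concretely, let $(\pi:\mt C\to \Vr,\, s_1,\dots,s_n,\, \mt E)$ be the universal data on the rigidified stack — strictly speaking, $\mt E$ lives on the gerbe $\Vc$, so one works with the twisted sheaf $\mt E$ on the $\mathbb G_m$-gerbe $\nud$. For $k \gg 0$, cohomology and base change guarantees that $\pi_*(\mt E\otimes\omega_\pi^k)$ is locally free of rank $d+kr - r(g-1)$ (using $R^1\pi_*(\mt E\otimes\omega_\pi^k)=0$, which holds uniformly on the semistable locus once $d+kr$ is large, as in the proof of Theorem \ref{teo1}), and its formation commutes with base change; by semistability and a boundedness argument one may also arrange $R^1$ to vanish on all of $\Us$. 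This pushforward is a $\mathbb G_m$-twisted vector bundle, i.e. $\mathbb G_m$ acts on it with a single weight, so its projectivization $\mathbb P(\pi_*(\mt E\otimes\omega_\pi^k))$ descends to an honest (untwisted) projective bundle $\mathbb P(k)\to\Vr$, hence by Theorem \ref{stack=coarse} (codimension-two restriction) to a projective bundle over $\Us$. Property (i) is then immediate from base change: the fibre over $[(C,x_1,\dots,x_n,E)]$ is $\mathbb P(H^0(C,E\otimes\omega_C^k))$.

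For property (ii), I would compute the Brauer class of $\mathbb P(k)$ via the obstruction map $obs:\mathbb Z\to H^2(\Vr,\mathbb G_m)$ from the exact sequence (\ref{lss}). The class $[\mathbb P(k)]$ is the image under $obs$ of the weight with which $\mathbb G_m\subset\Aut$ acts on the fibres of $\pi_*(\mt E\otimes\omega_\pi^k)$; since scalar multiplication by $t$ on $\mt E$ induces multiplication by $t$ on $H^0(C,E\otimes\omega_C^k)$, that weight is exactly $1$. Therefore $[\mathbb P(k)] = obs(1) = [\nud]$, by definition of $obs$ in the proof of Theorem \ref{kern}. (Equivalently: the pullback of $\mathbb P(k)$ to $\Vc$ is the projectivization of the genuine vector bundle $\pi_*(\mt E\otimes\omega_\pi^k)$ there, hence has trivial Brauer class, so $[\mathbb P(k)]\in\ker(\nu^*)$, and one checks it is a generator rather than a proper multiple by the weight-$1$ statement.) Finally, when $d\ge r(2g-1)$ — so $d > 2r(g-1)$, or rather $d \geq 2r(g-1)+r$, putting us above the range where every semistable bundle is nonspecial — already $\pi_*\mt E$ itself is locally free with vanishing $R^1$ on $\Us$, so one takes $k=0$.

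The main obstacle is the uniform vanishing of $R^1\pi_*(\mt E\otimes\omega_\pi^k)$ and local freeness of the pushforward \emph{over all of $\Us$}, not merely generically: $\Us$ parametrizes stable bundles on varying curves, and one must ensure no stable bundle in the family is "too special" for the chosen $k$. This follows from boundedness of the family of stable bundles of fixed rank and degree on genus-$g$ curves (hence a uniform Castelnuovo–Mumford-type bound), which gives a single $k$ working for the whole stack; this is the same input used to produce the open cover $\{\mt U_k\}$ in Theorem \ref{teo1}. A secondary, purely bookkeeping point is to verify that the twisted pushforward really is weight-homogeneous so that its projectivization descends — this is automatic since $\mathbb G_m$ acts on $\mt E$ with weight $1$ fibrewise and pushforward is linear, hence preserves the weight.
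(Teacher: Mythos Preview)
Your construction is the same as the paper's: both take the pushforward $\pi_*(\mt E\otimes\omega_\pi^k)$ on the regularly stable locus, observe that $\mathbb G_m$ acts with weight $1$, and projectivize to obtain $\mathbb P(k)$ over $\Us$. The identification of the Brauer class, however, is organized differently. The paper works on the gerbe side: it defines $\mathbb P(k)$ as the complement of the zero section in the total space of $\pi_*\mt E(k)$ over the open $\mt U^d_{r,g,n}\subset\Vc$, composes with the gerbe map $\mt U^d_{r,g,n}\to\Us$, and checks by hand that the fibres are projective spaces with linear transition functions. For (ii) it then uses the commutative triangle
\[
\xymatrix{
& \Br'(\mathbb P(k)) \\
\Br'(\Us)\ar[ur]\ar[r] & \Br'(\mt U^d_{r,g,n})\ar[u]
}
\]
together with Gabber's result that the kernel of the diagonal is generated by $[\mathbb P(k)]$ and Theorem~\ref{kern} that the kernel of the horizontal map is generated by $[\nud]$; since the vertical map is an isomorphism (open of codimension $\geq 2$ in a vector bundle), the two kernels coincide.

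Your route via the obstruction map $obs$ from the sequence~(\ref{lss}) is more direct and in fact slightly sharper: the paper's triangle argument only shows that $[\mathbb P(k)]$ and $[\nud]$ generate the same cyclic subgroup, whereas your weight-$1$ computation gives the literal equality $[\mathbb P(k)]=obs(1)=[\nud]$ asserted in the statement. The trade-off is that you are invoking the (standard, but not proved in the paper) identification of the Brauer class of the descended projectivization of a weight-$w$ bundle on a $\mathbb G_m$-gerbe with $w$ times the gerbe class, while the paper keeps everything internal by appealing to Gabber. Both arguments are correct; yours is cleaner if one is comfortable with twisted sheaves, the paper's is more self-contained.
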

\begin{proof} Let $\mt U^d_{r,g,n}$ be the open substack in $\Vc$ of regularly stable objects. Consider the universal curve $\pi:\mt C\to\mt U^d_{r,g,n}$ and the universal vector bundle $\mt E$ on it. Consider the coherent sheaf $\pi_*\mt E(k):=\pi_*(\mt E\otimes \omega_\pi^k)$. The automorphism group of any object in $\mt U^d_{r,g,n}$ acts on the corresponding fibre of $\pi_*\mt E(k)$. In particular, the group $\mathbb G_m$ acts on any fibre of the bundle with weight 1. For $k$ big enough, the above sheaf is locally free. We denote by $\mathbb P(k)$ the total space of the above vector bundle without the zero section. Consider the composition $\mathbb P(k)\to \mt U^d_{r,g,n}$ with the gerbe $\mt U^d_{r,g,n}\to\Us$. We claim that the resulting map 
$$
\mathbb P(k)\to \Us
$$
is the projective bundle in the assertion. Indeed, if we take a fibre over a point $(C,E)$ is isomorphic to the quotient stack $$\Big[\big(H^0(C,E(k))\setminus\{0\} \big)\big/\mathbb G_m\Big].$$
The group $\mathbb G_m$ acts freely (because we removed the zero section) with weight $1$. Then the quotient stack is isomorphic to the projective space $\mathbb P(H^0(C,E(k)))$. The same argument works for an \'etale open of $\Us$. So locally \'etale, $\mathbb P(k)$ is a projective space. For it to be a projective bundle, the transition automorphisms between two trivializations must be linear. This can be proved by using the fact that $\pi_*\mt E(k)$ is a vector bundle. Putting all together, $\mathbb P(k)$ is a projective bundle over $\Us$ satisfying $(i)$. We now prove (ii). Consider the commutative diagram
$$
\xymatrix{
&\Br'(\mathbb P(k))\\
\Br'(\Us)\ar[r]\ar[ur]&\Br'(\mt U^d_{r,g,n}).\ar[u]
}
$$
The vertical map is an isomorphism by Lemma \ref{rescompl}. The kernel of the diagonal map is generated by Brauer class of the projective bundle $\mathbb P(k)$ by \cite[p. 193]{Gab}. The kernel of the horizontal map is generated by $[\nud]$ by Theorem \ref{kern}. So, we have $(i)$.
The last assertion comes from the fact that if $d\geq r(2g-1)$ the higher cohomology groups of any stable vector bundle vanish (see \cite[Subsection 1.1]{Sc01}). Then, in this range, the pushforward $\pi_*\mt E$ of the universal vector bundle is already a vector bundle, by \cite[Remark 8.3.11.2]{FAG}.
\end{proof}
\begin{rmk}We will see in Proposition \ref{sym=proj} that when $d>2g-2$ and $r=1$, $\mathbb P(0)$ is nothing but (an open of) the universal symmetric product $\Sym$.
\end{rmk}
Let $U_r^d (C)$ be the moduli space of stable vector bundles over a fixed curve $C$. 
It is known that a universal vector bundle over $U_r^d (C)$ exists if and only if the Brauer class of $\mathbb P(\mt E)$ of the universal projective bundle is trivial in $\Br(C\times U_r^d (C))$.\\
This fact remains true in the universal setting with markings. With no markings it is false. Indeed, by \cite[Proposition 3.3.4]{Fr16}, there exists a universal vector bundle over $U^d_{r,g,0}$ if and only if $\operatorname{gcd}(r(d+1-g),r(2g-2), d+r(1-g))=1$. On the other hand, the Brauer class of $\mathbb P(\mt E)$ lives in the Brauer group of the universal curve $\mt C$ of $U_{r,g,0}^d$, which always exists because is an open in the rigidified stack. Arguing as in the proof of Proposition \ref{generatorproj}, it can be shown the Brauer class of $\mathbb P(\mt E)$ restricted to the open $U^d_{r,g,1}\subset\mt C$ is equal to $[\nu^d_{r,g,1}]$. By Theorems \ref{kern} and \ref{stack=coarse}, it is trivial if and only if $\operatorname{gcd}(r,d)=1$.\vspace{0.1cm}

We thank Indranil Biswas for pointing out the following observation.
\begin{rmk}If $n\neq 0$, there exists a universal vector bundle over the universal curve $\mt C$ of $\Us$ if and only if the Brauer class $[\mathbb{P}(\mt E)]\in\Br'(\mt C)$ of the universal projective bundle is trivial if and only if $r$ and $d$ are coprime.

If $n=0$, it can happen that the Brauer class of the universal projective bundle is trivial ($\Leftrightarrow$ $r$ and $d$ are coprime), but the corresponding vector bundle does not satisfy the universal property (it does $\Leftrightarrow$, $r(d+1-g)$, $r(2g-2)$ and $d+r(1-g)$ are coprime). 
\end{rmk}

\end{subsection}

\end{section}

\begin{section}{The Brauer group of the universal moduli stack $\Vc$.}\label{bruniv}

In this section we will compute $\Br'(\Vc)$. We will first reduce the problem to computing it in the case of $r=1$, then to computing it for the symmetric product of $d$ copies of the universal curve over $\Mg$, for $d$ large enough. Unless otherwise stated, through the subsection, we assume that $r\geq 2$, $g\geq 3$ and $n\geq 0$.

\begin{subsection}{Reduction steps}
	The aim of this subsection is to show the following
	\begin{prop}\label{br=br}The cohomological Brauer group of $\Vc$ injects into the cohomological Brauer group of $\Jc$.
	\end{prop}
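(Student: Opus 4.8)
The plan is to reduce the rank by one at a time, producing for every $r\geq 2$ an injection $\Br'(\mathcal V ec^d_{r,g,n})\hookrightarrow\Br'(\mathcal V ec^d_{r-1,g,n})$ and then iterating down to rank one, where $\mathcal V ec^d_{1,g,n}=\Jc$. At the outset one may assume $d$ as large as convenient compared with $r$ and $g$: tensoring with a power of the relative dualizing sheaf gives isomorphisms of stacks $\mathcal V ec^d_{r,g,n}\cong\mathcal V ec^{d+mr(2g-2)}_{r,g,n}$ for all $m$, while in rank one $\mathcal V ec^{d'}_{1,g,n}\cong\mathcal V ec^d_{1,g,n}=\Jc$ whenever $d'\equiv d\pmod{2g-2}$, so the target is unaffected.

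Fix then the reduction step, with $d\gg 0$, and write $\pi\colon\mathcal C\to\mathcal V ec^d_{r,g,n}$ for the universal curve and $\mathcal E$ for the universal bundle. On the open substack $\mathcal U$ where $R^1\pi_*\mathcal E=0$ the complement has codimension $\geq 2$: for $d>r(2g-2)$ every semistable bundle has vanishing $H^1$, so the bad locus sits inside the non-semistable one, which has codimension $\geq 2$ by Theorem \ref{teo1}. Hence $\Br'(\mathcal V ec^d_{r,g,n})\cong\Br'(\mathcal U)$ by Proposition \ref{opengeq2}, and since $\pi_*\mathcal E$ is locally free on $\mathcal U$, Proposition \ref{vec} gives $\Br'(\mathcal U)\cong\Br'(\mathbb V(\pi_*\mathcal E))$, where $\mathbb V(\mathcal F)$ denotes the total space of a vector bundle $\mathcal F$. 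Let $\mathcal W\subset\mathbb V(\pi_*\mathcal E)$ be the open substack of tuples $(C,x_1,\dots,x_n,E,s)$ for which $s\in H^0(C,E)$ is nowhere vanishing and the quotient $E':=\operatorname{coker}(s\colon\oo_C\to E)$ --- automatically locally free of rank $r-1$ and degree $d$ --- is semistable; by Proposition \ref{openrestr} we obtain an injection $\Br'(\mathbb V(\pi_*\mathcal E))\hookrightarrow\Br'(\mathcal W)$.

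Since $r\geq 2$ and the fibres of $\pi$ are curves, such an $s$ realizes $E$ as an extension $0\to\oo_C\xrightarrow{s}E\to E'\to 0$, and the assignment $(E,s)\mapsto E'$ gives a morphism from $\mathcal W$ to the semistable locus $\mathcal V^{ss}_{r-1}\subset\mathcal V ec^d_{r-1,g,n}$. On $\mathcal V^{ss}_{r-1}$ one has $\operatorname{Hom}(E',\oo_C)=0$ (as $E'$ is semistable of positive slope), so the relative sheaf $\mathcal{E}xt^1_\pi(\mathcal E',\oo)$ is locally free there, of constant rank $d+(r-1)(g-1)$, and the morphism above identifies $\mathcal W$ with the open substack of the total space $\mathbb V(\mathcal{E}xt^1_\pi(\mathcal E',\oo))$ obtained by discarding the extensions whose middle term $E$ fails to satisfy $H^1(C,E)=0$. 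Granting that this discarded locus has codimension $\geq 2$, and using that $\mathcal V^{ss}_{r-1}$ has codimension-$\geq 2$ complement in $\mathcal V ec^d_{r-1,g,n}$ (Theorem \ref{teo1}), Propositions \ref{opengeq2} and \ref{vec} give
$$\Br'(\mathcal W)\cong\Br'\!\big(\mathbb V(\mathcal{E}xt^1_\pi(\mathcal E',\oo))\big)\cong\Br'(\mathcal V^{ss}_{r-1})\cong\Br'(\mathcal V ec^d_{r-1,g,n}).$$
Composing the whole chain yields the injection $\Br'(\mathcal V ec^d_{r,g,n})\hookrightarrow\Br'(\mathcal V ec^d_{r-1,g,n})$, and iterating down to rank one proves the proposition.

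The routine ingredients are that the stack of these short exact sequences (equivalently, the relative $\operatorname{Ext}$-space) is algebraic, smooth, and carries the stated vector-bundle structure over the semistable locus --- a standard relative $\operatorname{Ext}/\operatorname{Quot}$ construction --- and the Bertini-type fact that a general section of a globally generated bundle of rank $\geq 2$ on a curve is nowhere vanishing. The step I expect to be the real obstacle is the codimension estimate for the discarded locus: I must show that, for $d\gg 0$, the locus of extensions with $h^1$ of the middle term at least $\delta$ has codimension $\geq 2$ in $\mathbb V(\mathcal{E}xt^1_\pi(\mathcal E',\oo))$. Because the fibres of this total space over $\mathcal V^{ss}_{r-1}$ jump in dimension along these loci, a crude count is not enough; one recognizes them instead as degeneracy loci of the tautological connecting homomorphism in the long exact sequence of $0\to\oo_C\to E\to E'\to 0$, stratifies by the corank, and shows each stratum has at least its expected determinantal codimension, a quantity that grows linearly in $d$ --- an estimate in the same Riemann--Roch/Clifford spirit as those underlying Theorem \ref{teo1} and the codimension bounds of \cite{Fr16}.
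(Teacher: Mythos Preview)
Your approach differs from the paper's in one structural choice that turns out to matter. Instead of descending one rank at a time, the paper performs the reduction in a single step via the stack $\mathcal{E}xt$ of short exact sequences $0\to\mathcal{O}_C^{r-1}\to E\to L\to 0$ with $L$ a line bundle of degree $d$. This stack carries forgetful maps $v\colon\mathcal{E}xt\to\Vc$ and $j\colon\mathcal{E}xt\to\Jc$. Because $\text{Hom}(L,\mathcal{O}_C)=0$ automatically for any line bundle of positive degree, $j$ is a genuine vector bundle over \emph{all} of $\Jc$, not merely over a semistable locus, so $j^*$ is an isomorphism on cohomological Brauer groups with no further argument. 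Injectivity of $v^*$ is obtained, exactly as in the first half of your step, from a commutative square involving a non-empty open inside the bundle $(\pi_*\mathcal{E})^{r-1}$ over an open of $\Vc$; crucially this uses only open-restriction injectivity, never a codimension-two estimate on the complement.

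The codimension estimate you flag as the real obstacle is therefore entirely absent from the paper's proof. It arises in your setup because you must \emph{invert} the open-restriction map to pass from $\Br'(\mathcal{W})$ back to $\Br'\big(\mathbb{V}(\mathcal{E}xt^1_\pi(\mathcal{E}',\mathcal{O}))\big)$, and the zero section---which would give the inverse for free---lands in the split extensions $E=\mathcal{O}_C\oplus E'$, all of which have $h^1(E)=g>0$ and hence lie outside $\mathcal{W}$. For what it is worth, the estimate you need is not hard: over a fixed semistable $E'$ with $h^1(E')=0$, the locus of classes $e\in\text{Ext}^1(E',\mathcal{O}_C)\cong H^0(E'\otimes\omega_C)^\vee$ for which the connecting map $\partial_e\colon H^0(E')\to H^1(\mathcal{O}_C)$ is not surjective is the union, over $[\eta]\in\mathbb{P}H^0(\omega_C)$, of the annihilators $(\eta\cdot H^0(E'))^{\perp}$. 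Since multiplication by a nonzero $\eta$ is injective on sections of a locally free sheaf, each such annihilator has codimension $h^0(E')=d-(r-1)(g-1)$, and a $(g-1)$-parameter family of them yields fibrewise codimension at least $d-r(g-1)$, which is $\geq 2$ for $d$ large. So your route can be completed, but the one-shot reduction to line bundles in the paper sidesteps the whole issue.
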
 
	The strategy of the proof is to define an intermediary stack $\mt Ext$, sitting in the following diagram
	\begin{equation}\label{ext}
	\xymatrix{
		\mt Ext \ar[d]^j\ar[r]_{v} & \mt Vec\\
		\mt Jac&
	}
	\end{equation}
	such that 
	\begin{enumerate}[(i)]
		\item the pull-back map $\Br'(\mt Vec)\xrightarrow{v^*}\Br'(\mt Ext)$ is injective,
		\item the map $j$ makes $\mt Ext$ a vector bundle over $\mt Jac$,
	\end{enumerate}
	By (ii) the map $j^*:\Br'(\mt Jac)\to\Br'(\mt Ext)$ is an isomorphism and $0_j^*$ is the inverse. Using (i) we immediately get the assertion. Up to twisting by a power of the canonical bundle, it will be enough to show these facts for $d$ big enough.
	
	Let's start with the definition of the auxiliary stack.
	\begin{defin}Let $\Ex$ be the category fibered over $(Sch/\mathbb C)$, which associates to any scheme $S$ the groupoid of objects $$(C\to S,\underline{\sigma}, 0\to \oo_C^{r-1}\to E\to L\to 0).$$ Where the former is a family of smooth $n$-marked curves of genus $g$ over $S$ and the latter is a short exact sequence of $S$-flat sheaves over $C$, such that $E$ is a vector bundle of relative degree $d$ and relative rank $r$.
		
		A morhism between two objects over $S$
		$$
		(\varphi, f,g):(C\to S,\underline{\sigma}, 0\to \oo_C^{r-1}\to E\to L\to 0)\longrightarrow(C'\to S,\underline{\sigma}', 0\to \oo_{C'}^{r-1}\to E'\to L'\to 0)
		$$
		is a triple $(\varphi, f,g)$, where $\varphi:(C,\underline{\sigma})\to (C',\underline{\sigma}')$ is an isomorphism of families of marked curves and $f:\varphi^*L'\cong L$, $g:\varphi^*E'\cong E$ are isomorphism of sheaves over $C'$, such that the diagram
		$$
		\xymatrix{
			0\ar[r]& \oo_{C}^{r-1}\ar[r]\ar[d]^{Id}& \varphi^*E'\ar[r]\ar[d]^f& \varphi^*L'\ar[r]\ar[d]^g& 0\\
			0\ar[r]& \oo_{C}^{r-1}\ar[r]& E\ar[r]& L\ar[r]& 0
		}
		$$
		commutes.
	\end{defin}
	
	The map $v$ in the diagram (\ref{ext}) maps an object $$\alpha = (C\to S, \sigma_1,\ldots,\sigma_n, 0\to \oo_C^{r-1}\to E\to L\to 0)\in\Ex(S)$$ to $(C\to S, \sigma_1,\ldots,\sigma_n,  E)\in\Vc(S)$.
	
	Similarly, $j$ maps $\alpha$ to $(C\to S, \sigma_1,\ldots,\sigma_n,  L)\in\Jc(S)$. If $d\gg 0$, by \cite[Lemma 7.1]{DN}, the map $v$ is dominant.\\
	
	The next lemma proves point (i).
	
	\begin{lem}\label{sec}Assume $d\gg 0$. The map $v:\Ex\to\Vc$ induces an inclusion of cohomological Brauer groups.
	\end{lem}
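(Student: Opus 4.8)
The plan is to show that $v:\Ex\to\Vc$ has a section, or at least behaves like one from the point of view of cohomology, after composing with a suitable iterated projective-bundle construction. The key geometric observation is that, for $d\gg 0$, a general vector bundle $E$ of degree $d$ and rank $r$ on a curve $C$ admits a surjection $E\to L$ onto a line bundle $L$ with kernel $\oo_C^{r-1}$; the data of such a presentation is exactly a surjection $\oo_C^{r-1}\twoheadleftarrow E^\vee$ compatible with a quotient, equivalently $r-1$ sections of $E^\vee$ in general position, plus the choice of the resulting $L$. First I would make this precise by building, over $\Vc$, the stack whose fibre over $(C,\underline\sigma,E)$ parametrizes the $r-1$ global sections of $E^\vee\otimes\omega_C^m$ (for $m\gg 0$, to guarantee global generation and the needed cohomology vanishing) which generically generate and whose vanishing locus is the required subsheaf. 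This auxiliary stack maps to $\Vc$ through a tower of (open substacks of) vector bundles $\pi_*(\mt E^\vee\otimes\omega_\pi^m)$, hence by Proposition \ref{vec} the pullback on $\Br'$ is injective at each stage; and it maps to $\Ex$, realizing $v$ up to these vector-bundle stages.

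Concretely, the key steps in order are: (1) For $d\gg 0$ and an appropriate twist, show $R^1\pi_*\mt E^\vee(m)=0$ and $\pi_*\mt E^\vee(m)$ is locally free of the expected rank, so that $\mathbb A:=\mathbb A(\pi_*\mt E^\vee(m))^{\oplus(r-1)}\to\Vc$ is a vector bundle; by Proposition \ref{vec}, $\Br'(\Vc)\hookrightarrow\Br'(\mathbb A)$ is an isomorphism. (2) Identify the open substack $\mathbb A^{\mathrm{gen}}\subset\mathbb A$ where the $r-1$ sections are fibrewise linearly independent at the generic point and their common vanishing is empty (so the induced map $\oo_C^{r-1}\to E^\vee(m)$, equivalently after dualizing $E(-m)\to\oo_C^{r-1}$... ) — I would set it up so the quotient is the desired $L$ — and check its complement has codimension $\geq 2$, using that the bad locus is a determinantal/incidence condition whose codimension grows with $d$; then $\Br'(\mathbb A)=\Br'(\mathbb A^{\mathrm{gen}})$ by Proposition \ref{opengeq2}. (3) Produce the natural morphism $\mathbb A^{\mathrm{gen}}\to\Ex$ sending such a tuple to the exact sequence it defines (after untwisting by $\omega_\pi^{-m}$, absorbing the twist into the choice of $d$), and check it is compatible with the maps to $\Vc$, i.e. $v\circ(\mathbb A^{\mathrm{gen}}\to\Ex)$ equals the composite $\mathbb A^{\mathrm{gen}}\hookrightarrow\mathbb A\to\Vc$. (4) Conclude: $\Br'(\Vc)\xrightarrow{\sim}\Br'(\mathbb A)\xrightarrow{\sim}\Br'(\mathbb A^{\mathrm{gen}})\to\Br'(\Ex)$ and the composite is the map $\Br'(\Vc)\to\Br'(\Ex)$ induced by... — wait, more carefully, the composite factors $v^*$, so $v^*$ is a split injection and in particular injective.

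The main obstacle I expect is step (3) together with the bookkeeping in step (2): one must choose the twisting power $m$ (and the degree $d$, which we are free to take large by the "up to twisting by a power of the canonical bundle" reduction already flagged in the excerpt) so that simultaneously $\pi_*$ of the relevant sheaf is locally free, the generic-linear-independence and base-point-free locus is open with complement of codimension at least two, and the resulting quotient sheaf is flat of the right rank and degree — and then to verify functoriality of the construction $\mathbb A^{\mathrm{gen}}\to\Ex$ in families, i.e. that it is a genuine morphism of stacks and not just a map on points. The codimension estimate is the crux: it is a standard count for the locus where $r-1$ sections of a bundle of large degree fail to be generically independent or have a common zero, and for $d\gg 0$ this codimension is at least two (indeed it tends to infinity), so Proposition \ref{opengeq2} applies. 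Everything else is formal, using the two invariance results (Propositions \ref{vec} and \ref{opengeq2}) already established.
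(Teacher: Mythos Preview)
Your overall architecture is the same as the paper's: build a vector bundle $\mathbb A$ over (an open of) $\Vc$ parametrizing $(r-1)$-tuples of sections, pass to the open locus where the resulting map $\oo_C^{r-1}\to E$ is a subbundle, produce the tautological map from there to $\Ex$, and conclude by commutativity of the square together with the invariance results from Section~\ref{prel}. The detour through $E^\vee$ and the extra twist by $\omega_C^m$ is unnecessary --- for $d\gg 0$ one works directly with sections of $E$ itself, as the paper does with $\mt M=\pi_*(\mt E)^{r-1}$ --- but this is cosmetic.

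There is, however, a genuine error in step (2). You assert that the complement of $\mathbb A^{\mathrm{gen}}$ in $\mathbb A$ has codimension $\geq 2$, and that this codimension ``grows with $d$''. This is false. The dominant component of the bad locus is where the map $\oo_C^{r-1}\to E$ drops rank at some point $p\in C$; by Thom--Porteous, for each fixed $p$ this is codimension $(r-(r-2))((r-1)-(r-2))=2$ in the fibre of $\mathbb A$, and letting $p$ vary over the one-dimensional curve gives a locus of codimension exactly $1$ in $\mathbb A$, independently of $d$. (Concretely, for $r=2$: a generic section of a globally generated rank-$2$ bundle on a curve is nowhere vanishing, but the sections that \emph{do} vanish somewhere form a divisor in $H^0(E)$.) So Proposition~\ref{opengeq2} does not apply.

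The fix is simple, and it is what the paper does: you do not need an isomorphism $\Br'(\mathbb A)\cong\Br'(\mathbb A^{\mathrm{gen}})$, only the injectivity of restriction to a nonempty open, which is Proposition~\ref{openrestr}. Then the composite $\Br'(\Vc)\to\Br'(\mathbb A)\to\Br'(\mathbb A^{\mathrm{gen}})$ is injective, and since it equals $\sigma^*\circ v^*$, the map $v^*$ is injective. You only need $\mathbb A^{\mathrm{gen}}$ to be nonempty, which for $d\gg 0$ is \cite[Lemma 7.1]{DN}.
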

	
	\begin{proof}We will construct a vector bundle $\mt M$ over an open substack $\mt U \subset \Vc$, with a (non-empty) open representable substack $V \subset \mt M$ sitting in the following commutative diagram 
		\begin{equation}\label{equivapp}
		\xymatrix{
			V\ar[d]^{j}\ar[r]^{\sigma} & \Ex \ar[d]^{v}\\
			\mt M\ar[r]^{p}&\Vc,
		}
		\end{equation}	
		
		As pullbacks through vector bundles and open immersions are injective, both maps $p^*, j^*$ are injective, and thus $v^*$ must be injective as well.
		
		First, up to restricting to an open subset $\mt U \subset \Vc$ and picking a sufficiently high degree, we may assume that the higher cohomology groups of any fibre of the universal vector bundle $\mt E$ along the universal curve $\pi: \mt C\to \mt U$ vanish. By \cite[Remark 8.3.11.2]{FAG}, $\pi_*(\mt E)$ is a vector bundle and commutes with base change. We remark that the result in \emph{loc. cit.} is expressed in terms of schemes, but the same holds in our situation, since the morphism $\pi$ is representable. Moreover we can suppose that the automorphisms group of any object in $\mt U$ is equal to $\mathbb{G}_m$. Now consider the vector bundle $\mt M = \pi_*(\mt E)^{r-1}$ over $\mt U$. It represents triples 
		$$(C \rightarrow S,\, E, \mt O_C^{r-1} \rightarrow E)$$
		where $(C \rightarrow S, E)$ is an object in $\mt U$. Inside $\mt M$, consider the open subset $V$ of elements such that the map $O_C^{r-1} \rightarrow E$ is injective and its cokernel is locally free. By \cite[Lemma 7.1]{DN} the image of this subset in $\mt U$ is everywhere dense, so in particular $V$ is non-empty. Additionally note that the objects in $V$ have trivial automorphisms, so $V$ is representable, i.e. an algebraic space (one can show that it is actually a scheme). By construction the universal curve $\mt C_V$ over $V$ comes equipped with a short exact sequence 
		$$ 0 \rightarrow \mt O_{\mt C_V}^{r-1} \rightarrow \mt E_{\mt C_V} \rightarrow L \rightarrow 0$$
		where $L$ is a line bundle. This induces the desired map $\sigma$.

	\end{proof}
	
	We now prove the point (ii).
	
	\begin{lem}\label{exvec}Assume $d\gg 0$. The morphism $j:\Ex\to\Jc$ endows the stack $\Ex$ with the structure of vector bundle over $\Jc$.
	\end{lem}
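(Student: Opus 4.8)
The plan is to identify $\Ex$ with the total space of an explicit vector bundle on $\Jc$ built out of a relative $\operatorname{Ext}^1$-sheaf. Write $\rho:\mt C\to\Jc$ for the universal curve and $\mt L$ for the universal degree-$d$ line bundle on it, which exists because $\Jc=\mathcal Vec^d_{1,g,n}$ is itself a moduli stack of line bundles; since $\rho$ is representable, proper and flat of relative dimension one, cohomology-and-base-change applies as in the schematic case. For $d$ large enough $\deg(L^{-1})<0$ on every fibre, so $H^0(C,L^{-1})=0$; hence $\rho_*(\mt L^{-1})=0$, and since $R^1\rho_*$ is then the top non-vanishing direct image and $h^1(C,L^{-1})=d+g-1$ is constant in $(C,L)$ by Riemann--Roch, the sheaf $R^1\rho_*(\mt L^{-1})$ is locally free of that rank and commutes with arbitrary base change. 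I then set
$$\mt E:=\big(R^1\rho_*(\mt L^{-1})\big)^{\oplus(r-1)},$$
a vector bundle of rank $(r-1)(d+g-1)$ on $\Jc$, and let $\mathbb V(\mt E)\to\Jc$ be its total space, i.e.\ the relative affine scheme whose points over a map $g_S:S\to\Jc$ are the global sections of $g_S^*\mt E$.

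Next I would construct an equivalence $\Phi:\Ex\xrightarrow{\ \sim\ }\mathbb V(\mt E)$ over $\Jc$. Given $\xi=(C_S\to S,\underline\sigma,0\to\oo_{C_S}^{r-1}\to E\to L_S\to 0)$ in $\Ex(S)$, with image $g_S:=j(\xi):S\to\Jc$, the short exact sequence has a class $e(\xi)\in\operatorname{Ext}^1_{\oo_{C_S}}(L_S,\oo_{C_S}^{r-1})$. The local-to-global spectral sequence for relative $\operatorname{Ext}$ along $\rho_S:C_S\to S$,
$$H^p\big(S,\,R^q\rho_{S*}\mathcal{H}om_{\oo_{C_S}}(L_S,\oo_{C_S}^{r-1})\big)\ \Rightarrow\ \operatorname{Ext}^{p+q}_{\oo_{C_S}}(L_S,\oo_{C_S}^{r-1}),$$
together with the vanishing $\rho_{S*}\mathcal{H}om_{\oo_{C_S}}(L_S,\oo_{C_S}^{r-1})=\rho_{S*}\big((L_S^{-1})^{\oplus(r-1)}\big)=0$ (base change from the fibrewise vanishing $H^0(C,L^{-1})=0$), yields a canonical isomorphism
$$\operatorname{Ext}^1_{\oo_{C_S}}(L_S,\oo_{C_S}^{r-1})\ \cong\ \Gamma\big(S,\,R^1\rho_{S*}((L_S^{-1})^{\oplus(r-1)})\big)\ =\ \Gamma(S,g_S^*\mt E),$$
the last equality by base change for $R^1\rho_*(\mt L^{-1})$. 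Thus $e(\xi)$ is a section of $g_S^*\mt E$, i.e.\ a point of $\mathbb V(\mt E)$ over $g_S$; this is functorial in $\xi$ and defines $\Phi$. (Equivalently, on $\mathbb V(\mt E)$ the tautological section of the pullback of $\mt E$ gives, through the same isomorphism, a tautological extension of the pullback of $\mt L$ by $\oo^{r-1}$, hence a morphism $\mathbb V(\mt E)\to\Ex$ over $\Jc$ which is inverse to $\Phi$.)

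Finally I would verify that $\Phi$ is an equivalence, hence a fortiori that $j$ is the total space of $\mt E$. Essential surjectivity is immediate: any class in $\operatorname{Ext}^1_{\oo_{C_S}}(L_S,\oo_{C_S}^{r-1})$ is by definition represented by an extension over $S$. For full faithfulness over a base $S$: a morphism $\Phi(\xi)\to\Phi(\xi')$ in $\mathbb V(\mt E)$ is a morphism $(\varphi,f):(C_S,\underline\sigma,L_S)\to(C'_S,\underline\sigma',L'_S)$ in $\Jc$ carrying $e(\xi)$ to $e(\xi')$; the completions of the corresponding commutative square by an isomorphism $g:\varphi^*E'\to E$ fixing $\oo_{C_S}^{r-1}$ form a torsor under $\operatorname{Hom}_{\oo_{C_S}}(L_S,\oo_{C_S}^{r-1})=\Gamma(S,\rho_{S*}L_S^{-1})^{\oplus(r-1)}=0$, and this torsor is non-empty precisely when $(\varphi,f)$ respects the extension classes, which is exactly the condition defining a morphism in $\mathbb V(\mt E)$. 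Hence $\operatorname{Hom}_{\Ex(S)}(\xi,\xi')\to\operatorname{Hom}_{\mathbb V(\mt E)(S)}(\Phi\xi,\Phi\xi')$ is a bijection. I expect the only genuinely delicate points to be the cohomology-and-base-change statements in the stacky setting and the vanishing $\rho_{S*}(L_S^{-1})=0$: this single vanishing (which is where the hypothesis $d\gg 0$ enters) is simultaneously responsible for $\mt E$ being locally free, for the local-to-global edge isomorphism, and for the rigidity of the extensions; everything else is formal.
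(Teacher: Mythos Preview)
Your proposal is correct and follows essentially the same approach as the paper: both identify $\Ex$ with the total space of the relative $\operatorname{Ext}^1$-bundle, using the vanishing $H^0(C,L^{-1})=0$ for $d>0$ to kill the $\operatorname{Hom}$-automorphisms and to ensure local freeness and base change. The only cosmetic difference is that the paper applies relative Serre duality to present the bundle as $\big(\pi_*(\mt L\otimes\omega_\pi)^\vee\big)^{r-1}$ rather than $\big(R^1\rho_*(\mt L^{-1})\big)^{r-1}$, and outsources the fibrewise quotient-stack description to \cite[Proposition~A.3(ii)]{Ho10}, whereas you carry out the full-faithfulness and essential-surjectivity verification explicitly.
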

	
	\begin{proof}As in the proof of \cite[Proposition A.3(ii)]{Ho10}, the fibre of $j$ over a geometric point $(C,L)$ is the quotient stack of the affine space $\text{Ext}^1(L,\oo_C^{r-1})$ modulo the trivial action of the group $\text{Hom}(L,\oo_C^{r-1})$. Since $d$ is positive, the latter group is trivial. Moreover, $\text{Ext}^1( L,\oo_C^{r-1})$ is equal to $r-1$ copies of $H^1(C, L^{-1})=H^0(C,L\otimes\omega_C)^\vee$. The latter equality comes by Serre duality. \\
		The same construction can be done in families: let $\pi:\mt C\to \Jc$ be the universal curve and $\mt L$ be the universal bundle over it. Assume that $d$ is big enough such that $H^1(C, L\otimes\omega_C)=0$ for any geometric point $(C,L)\in\Jc$. Then we have a natural isomorphism of stacks
		\begin{equation}\label{auxvec}
		\left(\pi_*(\mathcal L\otimes \omega_{\pi}\right)^\vee)^{r-1}\cong \Ex,
		\end{equation}
		endowing $\Ex$ with a vector bundle structure over $\Jc$.
	\end{proof}
	
\end{subsection}

\begin{subsection}{The universal symmetric product $\Sym$.}

Consider the $d$-th symmetric product $\Sym$ of the universal curve $\mt C_{g,n}\to\Mg$. We want to show that
$$
\Br'(\Jc)=\Br'(\Sym).
$$
Unless otherwise stated, through the subsection, we assume that $g\geq 3$ and $n\geq 0$.\\

 The $d$-th symmetric product admits the following moduli interpretation: for each scheme $S$ it associates the groupoids of pairs $(C\to S, D)$, where the former is a family of smooth curves of genus $g$ over $S$ and $D$ is an effective divisor $D\subset C$ of relative degree $d$ over $S$. Consider the universal Abel-Jacobi morphism
$$
\begin{array}{cccc}
A_d:&\Sym&\longrightarrow& \Jc\\
& (C\to S, D)&\mapsto &(C\to S, \oo_C(D)).
\end{array}
$$
It is well known that when $d>2g-2$, the map is surjective. We want to show that the Abel-Jacobi map allows us to describe $\Sym$ as an open subset of a vector bundle over $\Jc$.

\begin{prop}\label{sym=proj}Assume $d> 2g-2$. We have an isomorphism of stacks
$$
\Sym\cong \pi_*(\mt L)\setminus \{ 0_{\Jc}\},
$$
where $\mt L$ is the universal line bundle over the universal curve $\pi:\mt C\to \Jc$ and $0_{\Jc}$ is the zero section of the vector bundle $\pi_*(\mt L)$. In particular, $\Sym$ is an open subset of a vector bundle over $\Jc$, whose complement is of codimension greater than $1$.
\end{prop}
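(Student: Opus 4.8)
The plan is to identify both stacks with the same moduli functor, via the classical dictionary between effective Cartier divisors and line bundles equipped with a regular section, carried out in families. First, since $d>2g-2$, for every smooth genus $g$ curve $C$ and every degree $d$ line bundle $L$ on it one has $\deg(\omega_C\otimes L^{-1})=2g-2-d<0$, hence $H^0(C,\omega_C\otimes L^{-1})=0$ and, by Serre duality, $H^1(C,L)=0$; thus $h^0(C,L)=d+1-g$ is constant. By cohomology and base change (\cite[Remark 8.3.11.2]{FAG}, which applies here since $\pi$ is representable), the pushforward $\pi_*\mt L$ is a vector bundle on $\Jc$ of rank $d+1-g$, and its formation commutes with base change. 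Write $E:=\pi_*(\mt L)$ for its total space, viewed as a stack over $\Jc$: for a scheme $T$, an object of $E(T)$ is a map $T\to\Jc$, i.e.\ a family $(C_T\to T,L_T)$ with $L_T$ of relative degree $d$, together with a section $s\in H^0(C_T,L_T)$, and the open substack $E\setminus\{0_{\Jc}\}$ is the locus where $s|_{C_t}\neq 0$ for every $t\in T$ (in a local trivialization the zero section is cut out by the components of $s$, and one uses base change to compare $(\pi_*\mt L)_t$ with $H^0(C_t,L_t)$).

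Next I would construct a morphism $\Phi\colon E\setminus\{0_{\Jc}\}\to\Sym$ over $\Jc$. Given $(C_T\to T,L_T,s)$ with $s$ fibrewise nonzero: the fibres $C_t$ are smooth, hence integral, so a nonzero section of a line bundle is a nonzerodivisor; thus $s\colon\oo_{C_T}\to L_T$ is fibrewise injective and its vanishing locus $D:=V(s)\subset C_T$ is a relative effective Cartier divisor of degree $d$ over $T$ (finite flat of degree $d$). Setting $\Phi(C_T,L_T,s):=(C_T\to T,D)$ is functorial. Conversely, I would define $\Psi\colon\Sym\to E\setminus\{0_{\Jc}\}$: given $(C_T\to T,D)$ with $D$ a relative effective Cartier divisor of degree $d$, put $L_T:=\oo_{C_T}(D)$, which has relative degree $d$ and so defines a $T$-point of $\Jc$ (this underlying map is precisely $A_d$), together with the tautological section $s_D$, the image of $1$ under $\oo_{C_T}\hookrightarrow\oo_{C_T}(D)$; on each fibre $s_D$ is the canonical section of $\oo_{C_t}(D_t)$, which is nonzero, so the pair lands in $E\setminus\{0_{\Jc}\}$. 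This is again functorial.

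Then I would check that $\Phi$ and $\Psi$ are mutually quasi-inverse: $V(s_D)=D$ gives a natural isomorphism $\Phi\circ\Psi\cong\mathrm{id}$, while the regular section $s\colon\oo_{C_T}\hookrightarrow L_T$ induces a canonical isomorphism $(L_T,s)\cong(\oo_{C_T}(V(s)),s_{V(s)})$, giving $\Psi\circ\Phi\cong\mathrm{id}$; both isomorphisms are natural and compatible with the maps to $\Jc$, so the resulting equivalence $\Sym\cong E\setminus\{0_{\Jc}\}$ intertwines the Abel--Jacobi map $A_d$ with the bundle projection $\pi_*\mt L\to\Jc$. The one point that requires genuine care — and which I expect to be the only nontrivial step — is the stacky bookkeeping of automorphisms: an object $(C,L)$ of $\Jc$ carries the extra automorphisms $\mathbb G_m\subset\Aut(L)$ given by rescaling $L$, which also act on $H^0(C,L)$, whereas an object $(C,D)$ of $\Sym$ has no such automorphisms. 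The resolution is that an automorphism of $(C_T,L_T,s)$ in $E$ is a pair $(\varphi,\psi)$ with $\psi(\varphi^*s)=s$, and once $s$ is fibrewise nonzero this determines the scaling part of $\psi$, so that $\Aut_{E}(C_T,L_T,s)=\{\varphi\in\Aut(C_T/T,\underline{\sigma}):\varphi^*D=D\}=\Aut_{\Sym}(C_T,D)$; the $\mathbb G_m$ only survives over the zero section $0_{\Jc}$. Verifying that this identification is compatible with the natural isomorphisms above is routine.

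Finally, the zero section $0_{\Jc}\cong\Jc$ is a closed substack of $E=\pi_*(\mt L)$ of codimension $\operatorname{rank}(\pi_*\mt L)=d+1-g$; since $g\geq 3$ and $d>2g-2$ we have $d+1-g\geq g\geq 3>1$, which yields the last assertion (and, together with Propositions \ref{vec} and \ref{opengeq2}, the equality of Brauer groups to be recorded afterwards).
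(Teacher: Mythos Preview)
Your proof is correct and follows essentially the same route as the paper: both identify $\pi_*(\mt L)\setminus\{0_{\Jc}\}$ with $\Sym$ via the standard dictionary between effective Cartier divisors and line bundles with a nowhere-zero section, after using Serre duality and the bound $d>2g-2$ to ensure $\pi_*\mt L$ is locally free and commutes with base change. Your treatment is in fact more explicit than the paper's on two points the paper leaves implicit---the matching of automorphism groups (your observation that the $\mathbb G_m$-ambiguity in $\Jc$ is pinned down once a nonzero section is fixed) and the actual codimension count $\operatorname{rank}\pi_*\mt L=d+1-g\geq g\geq 3$.
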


\begin{proof}We set $\mt T:=\pi_*(\mt L) \setminus \{ 0_{\Jc}\}$ and remove the markings from the notation. Let $(C,L)$ be a point in $\Jc$, by Serre duality $H^1(C,L)\cong H^0(C,L^{-1}\otimes\omega_C)^*$ and the latter is zero by assumptions on $d$. Then, as in the proof of Lemma \ref{sec}, the stack $\mt T$ represents triples 
$$(C \rightarrow S,\, E, \mt O_C \rightarrow L),$$
where $(C \rightarrow S, E)$ is an object in $\Jc$ and $\mt O_C\to L$ is injective (because we removed the zero section). We want to define a morphism of stacks 
$$A:\Sym\longrightarrow \mt T.$$
Let $(C\to T,D)$ be an object in $\Sym$. Consider the exact sequence
\begin{equation}
0\to \oo_C(-D)\xrightarrow{s} \mt O_C\to \mt O_D\to 0
\end{equation}
of $T$-flat sheaves over $C$. We remark that $\oo_C(-D)$ is a line bundle, so we can consider the morphism
$
s^{\vee}:\mt O_C\to \oo_C(D)
$
 dual to $s$. The functor $A$ is defined as it follows: we send a pair $(C\to S, D)$ to $(C\to S, \mt O_C\xrightarrow{s^{\vee}} \oo_C(D))$. Conversely, any object $(C\to S,\mt O_C\xrightarrow{m} L )$ in $\mt T$ defines an exact sequence
\begin{equation}
0\to L^{-1}\xrightarrow{m^{\vee}} \mt O_C\to M\to 0,
\end{equation}
of $S$-flat sheaves over $C$. Let $D(m)$ be the closed subscheme attached to it, it is  a relative effective divisor on $C\to S$ of degree $d$. So, we have defined a morphism $$B:\mt T\to \Sym,$$ which maps $(C\to S,\mt O_C\xrightarrow{m} L )$ to $(C\to S,D(m) )$. Checking that $B$ is the inverse of $A$ is a straightforward computation.
\end{proof}

By Lemma \ref{vec}, \ref{opengeq2}, we have immediately
\begin{cor}\label{sym=jac}Assume $d> 2g-2$. The universal Abel-Jacobi map induces an isomorphism
$$
\Br'(\Sym)\cong\Br'(\Jc)
$$
of cohomological Brauer groups.
\end{cor}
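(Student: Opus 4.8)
The plan is to deduce the statement formally from Proposition \ref{sym=proj}, which identifies $\Sym$ with the complement of the zero section in the vector bundle $\pi_*(\mt L)$ over $\Jc$, together with the invariance results for the cohomological Brauer group established in Section \ref{prel}. First I would observe that under the hypothesis $d>2g-2$ (equivalently $d\geq 2g-1$ for integers) the bundle $\pi_*(\mt L)$ has rank $h^0(C,L)=d+1-g\geq g\geq 3$ at every geometric point $(C,L)\in\Jc$, so its zero section $0_{\Jc}$ is a closed substack of codimension $\geq 2$; this is exactly the content of the last sentence of Proposition \ref{sym=proj}. Moreover $\Jc$ is a smooth Artin stack (Theorem \ref{teo1} with $r=1$, Remark \ref{capo}), hence so are the total space $\pi_*(\mt L)$ and its open substack $\Sym$.

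Next I would record that, by construction, the universal Abel-Jacobi morphism $A_d$ factors as the composite
$$
\Sym\;\cong\;\pi_*(\mt L)\setminus\{0_{\Jc}\}\;\stackrel{\iota}{\hookrightarrow}\;\pi_*(\mt L)\;\stackrel{p}{\longrightarrow}\;\Jc,
$$
where $p$ is the bundle projection and $\iota$ the open immersion: chasing the isomorphism $A$ of Proposition \ref{sym=proj} and then projecting sends a pair $(C\to S,D)$ to $(C\to S,\oo_C(D))$, which is precisely $A_d$. It then remains only to invoke the two invariance statements. Proposition \ref{vec}, applied to the vector bundle $p\colon\pi_*(\mt L)\to\Jc$, gives an isomorphism $p^*\colon\Br'(\Jc)\xrightarrow{\ \sim\ }\Br'(\pi_*(\mt L))$. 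Proposition \ref{opengeq2}, applied to the open immersion $\iota$ whose complement $0_{\Jc}$ has codimension $\geq 2$, gives an isomorphism $\iota^*\colon\Br'(\pi_*(\mt L))\xrightarrow{\ \sim\ }\Br'(\pi_*(\mt L)\setminus\{0_{\Jc}\})=\Br'(\Sym)$. Composing, $A_d^*=\iota^*\circ p^*$ is an isomorphism $\Br'(\Jc)\cong\Br'(\Sym)$, as claimed.

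As for difficulty: there is essentially no obstacle once Proposition \ref{sym=proj} is in hand — the argument is purely formal. The only point worth double-checking is the codimension bound, and it is precisely to guarantee it (rather than merely the surjectivity of $A_d$, which would already follow from $d\geq 2g-1$) that one keeps track of the hypothesis $d>2g-2$; for $g\geq 3$ this forces the zero section to have codimension at least $g\geq 3$, comfortably more than what Proposition \ref{opengeq2} requires.
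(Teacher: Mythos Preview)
Your proposal is correct and follows exactly the approach the paper takes: the paper simply writes ``By Lemma \ref{vec}, \ref{opengeq2}, we have immediately'' after Proposition \ref{sym=proj}, which is precisely the factorization $A_d^*=\iota^*\circ p^*$ you spell out. Your added verification of the rank (hence codimension) of the zero section and the identification of $A_d$ with $p\circ\iota$ are the only details the paper leaves implicit, and they are all fine.
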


\end{subsection}

\begin{subsection}{The Brauer group of the universal symmetric product $\Sym$.}

Unless otherwise stated, through the subsection, we assume that $g\geq 3$ and $n\geq 0$. The aim of this section is proving the following

\begin{teo}\label{brsym}Assume $d\geq 2$. We have an inclusion of cohomological Brauer groups
$$
\Br'(\Sym)\subset \Br'(\mathcal M_{g,d+n}).
$$
\end{teo}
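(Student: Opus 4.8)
The plan is to realize $\Sym$ as an open substack of a suitable relative configuration-type space over $\Mg$ and then compare cohomological Brauer groups using the invariance results of Section \ref{prel}. Concretely, let $\Cid$ denote the $d$-fold fibre product of the universal curve $\Ci \to \Mg$ with itself over $\Mg$ (this is the auxiliary object announced in the introduction, to be studied in Section \ref{someresults}). The symmetric group $\mathfrak S_d$ acts on $\Cid$ over $\Mg$ by permuting the factors, and the quotient stack is exactly $\Sym$; equivalently, $\Sym$ is the relative symmetric product. First I would restrict to the open substack $\Cid_\circ \subset \Cid$ where the $d$ points are pairwise distinct (and disjoint from the $n$ marked sections): on this locus $\mathfrak S_d$ acts freely, so $\Cid_\circ / \mathfrak S_d$ is an open substack $\mathrm{Sym}^\circ \subset \Sym$ parametrizing reduced divisors, and it is in turn an \'etale $\mathfrak S_d$-cover of $\Cid_\circ$. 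The key geometric point is that $\Cid_\circ$ maps to $\mathcal M_{g,d+n}$ — sending $(C,\sigma_1,\ldots,\sigma_n,x_1,\ldots,x_d)$ to the curve with the $d+n$ marked points — and this map is an open immersion onto the locus where the first $n$ points play a distinguished role; in fact $\mathcal M_{g,d+n}$ itself carries a free $\mathfrak S_d$-action permuting the last $d$ markings, with quotient precisely $\mathrm{Sym}^\circ$ (or an open substack thereof).

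Next I would assemble the comparison of Brauer groups in three steps. Step one: the complement $\Sym \setminus \mathrm{Sym}^\circ$ — divisors with a repeated point or meeting a section — is a closed substack of codimension at least two in $\Sym$ (here one uses $d \geq 2$ is not needed for codimension, but the statement is vacuous-friendly; the diagonal locus is codimension one in $\Cid$, wait — I must be careful). The diagonal $x_i = x_j$ has codimension one, so $\mathrm{Sym}^\circ$ is not big inside $\Sym$; instead I would \emph{not} pass to the reduced locus but rather argue directly. Let me re-plan: the cleanest route is to observe that $\mathcal M_{g,d+n} \to \Sym$, obtained by forgetting the ordering of the last $d$ points, is \'etale of degree $d!$ onto its image, but is only defined on the open locus of distinct points. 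So instead I take $\mathcal M_{g,d+n}$ as the \emph{total space of an \'etale cover} of the open $\mathrm{Sym}^\circ \subset \Sym$, giving an injection $\Br'(\mathrm{Sym}^\circ) \hookrightarrow \Br'(\mathcal M_{g,d+n})$ by a transfer/restriction argument for finite \'etale covers, and separately I relate $\Br'(\Sym)$ to $\Br'(\mathrm{Sym}^\circ)$ via Proposition \ref{openrestr}, which gives an injection $\Br'(\Sym) \hookrightarrow \Br'(\mathrm{Sym}^\circ)$. Composing yields $\Br'(\Sym) \hookrightarrow \Br'(\mathcal M_{g,d+n})$.

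The key technical input, and the step I expect to be the main obstacle, is producing the injection $\Br'(\mathrm{Sym}^\circ) \hookrightarrow \Br'(\mathcal M_{g,d+n})$ from the \'etale $\mathfrak S_d$-cover $\mathcal M_{g,d+n} \to \mathrm{Sym}^\circ$. For a finite \'etale cover $f: Y \to X$ of degree $m$, the composition of pullback $f^*$ and the norm (transfer) map $f_*: H^2(Y,\mathbb G_m) \to H^2(X,\mathbb G_m)$ is multiplication by $m$, so $\ker(f^*)$ is killed by $m$; this gives injectivity of $f^*$ on the prime-to-$m$ part of the Brauer group but not in general. To get injectivity on all of $\Br'$ one needs more: here I would use that both stacks are smooth, and that $\mathcal M_{g,d+n}$ is the total space of a cover arising from an honest fibre-product construction, allowing a direct cocycle-level argument — an Azumaya (or $\mathbb G_m$-gerbe) on $\mathrm{Sym}^\circ$ whose pullback to $\mathcal M_{g,d+n}$ is trivial must already be trivial because a trivialization upstairs is $\mathfrak S_d$-equivariant up to coboundary, and $\mathfrak S_d$-equivariance descends. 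Making this descent argument precise — i.e. showing the relevant $\mathfrak S_d$-cohomology obstruction vanishes, or more simply that the cover $\mathcal M_{g,d+n} \to \mathrm{Sym}^\circ$ splits Brauer classes faithfully — is the crux. An alternative, possibly cleaner, route is to invoke that $H^2(-,\mathbb G_m)$ of a quotient by a \emph{free} finite group action injects via pullback when the base is a scheme or nice stack; one reduces via the Hochschild–Serre-type spectral sequence $H^p(\mathfrak S_d, H^q(\mathcal M_{g,d+n},\mathbb G_m)) \Rightarrow H^{p+q}(\mathrm{Sym}^\circ, \mathbb G_m)$ and controls the low-degree terms using that $\mathrm{Pic}(\mathcal M_{g,d+n})$ and $H^0(\mathbb G_m) = \mathbb G_m$ have understood $\mathfrak S_d$-module structure (these Picard computations are exactly the auxiliary results of Section \ref{someresults}). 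I would pursue whichever of these makes the edge terms vanish; the rest of the argument is then formal, assembling Proposition \ref{openrestr} with this injection.
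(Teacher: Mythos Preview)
Your overall strategy---restrict to the open locus $\mathrm{Sym}^\circ$ of reduced divisors disjoint from the sections, then use the free $\mathfrak S_d$-cover $\mathcal M_{g,d+n}\to \mathrm{Sym}^\circ$---is natural, and the first step $\Br'(\Sym)\hookrightarrow\Br'(\mathrm{Sym}^\circ)$ via Proposition~\ref{openrestr} is fine. The gap is in the second step: the pullback $\Br'(\mathrm{Sym}^\circ)\to\Br'(\mathcal M_{g,d+n})$ is \emph{not} injective for $d\geq 4$. In the Hochschild--Serre sequence you write down, the term $E_2^{2,0}=H^2(\mathfrak S_d,\mathbb C^\times)$ is the Schur multiplier, which is $\mathbb Z/2$ for $d\geq 4$. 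One checks that every $\mathfrak S_d$-invariant line bundle on $\mathcal M_{g,d+n}$ (these are generated by $\lambda$, the $\Psi_j$ for $j>d$, and $\bigotimes_{i\le d}\Psi_i$) carries a genuine $\mathfrak S_d$-linearization and hence descends, so the differential $d_2\colon\Pic(\mathcal M_{g,d+n})^{\mathfrak S_d}\to H^2(\mathfrak S_d,\mathbb C^\times)$ vanishes and the Schur class survives into $\Br'(\mathrm{Sym}^\circ)$. Your sentence ``a trivialization upstairs is $\mathfrak S_d$-equivariant up to coboundary'' is precisely the assertion that this obstruction vanishes, and it does not. The Picard calculation of Section~\ref{someresults} kills $E_2^{1,1}=H^1(\mathfrak S_d,\Pic)$, but it says nothing about $E_2^{2,0}$.

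What you actually need, then, is that the Schur class in $\Br'(\mathrm{Sym}^\circ)$ does \emph{not} lie in the image of $\Br'(\Sym)$. This is exactly the substance of the paper's Lemma~\ref{lemma2}: one compares the schematic quotient $\Sym$ with the stacky quotient $[\Cid/\mathfrak S_d]$ (rather than passing to the free locus), shows that the image of $H^2(B\mathfrak S_d,\mu_n)$ inside $H^2([\Cid/\mathfrak S_d],\mu_n)$ meets $H^2(\Sym,\mu_n)$ trivially, and proves this by restricting to a single fibre and invoking Macdonald's explicit description of $H^*(\mathrm{Sym}^d X,\mathbb Z)$ to see that $H^2(\mathrm{Sym}^d X,\mathbb Z/n)\to H^2(X^d,\mathbb Z/n)$ is injective. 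The paper then runs the Kummer/snake-lemma comparison between $\Sym$ and $(\Cid)^{\mathfrak S_d}$; the vanishing of $H^1(\mathfrak S_d,\Pic(\Cid))$ (Proposition~\ref{van}) and of $H^1(\Cid,\mu_k)$ are the remaining inputs. In short, your route is salvageable but requires the same hard ingredient the paper isolates in Lemma~\ref{lemma2}; the spectral-sequence bookkeeping alone does not suffice.
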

Before continuing, we fix some notations. Let $T$ be an irreducible, noetherian and regular Deligne-Mumford stack and $C\to T$ be a flat family of smooth curves over $S$. We denote by $$C^{d}:=\underbrace{C\times_T\ldots\times_T C}_{d \text{ times}}$$ the relative $d$-th fibred product. It comes equipped with a canonical map onto $T$, whose geometric fibres over a point $t\in T$ are the usual $d$-th product of the curve $C_t$.
	The symmetric group $S_d$ acts fiberwise on $C^d\to T$ by permuting the factors. There is a schematic quotient 
	$$
	Sym^dC:=C^d/S_d
	$$
equipped with a representable map $Sym^dC \rightarrow T$. It is an irreducible smooth Deligne-Mumford stack. We will also consider the stacky quotient $[C^d/S_d]$ which is again an irreducible smooth Deligne-Mumford stack, but the map $[C^d/S_d] \rightarrow T$ is not representable.

Observe that $\Sym= \mt C^d_{g,n}/S_d$, where $\mt C^d_{g,n}$ is the relative $d$-th fibred product of the universal curve $\mt C_{g,n}\to \mt M_{g,n}$ (see also Subsection \ref{app}).

With the first lemma, we give a description, under certain hypotheses, of the second cohomology group of the quotient stack.

\begin{lem}\label{lemma1}Let $C\to T$ be as above. If $n$ is an integer such that $H^1(C^d,\mu_n)=0$, then the sequence
\begin{equation}\label{brauerstack}
0\to H^2(\mt BS_d, \mu_{n})\to H^2([C^d/S_d],\mu_{n})\to H^2(C^d,\mu_{n})^{S_d}
\end{equation}
is exact.
\end{lem}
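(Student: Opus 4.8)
The plan is to use the Hochschild--Serre (Cartan--Leray) spectral sequence for the Galois-type cover $C^d\to[C^d/S_d]$, or equivalently the Leray spectral sequence for the quotient map $q\colon C^d\to[C^d/S_d]$, which is an $S_d$-torsor. This gives
$$
E_2^{p,q}=H^p(\mt BS_d, H^q(C^d,\mu_n))\Rightarrow H^{p+q}([C^d/S_d],\mu_n),
$$
where the coefficients $H^q(C^d,\mu_n)$ carry the natural $S_d$-action by permutation of the factors. (Strictly speaking one should note that $[C^d/S_d]\to T$ and $C^d\to T$ are all nice enough stacks for étale cohomology with finite coefficients and such spectral sequences to be available; this is standard since $S_d$ is finite and everything is smooth over $\mathbb C$.)

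First I would write out the five-term exact sequence in low degrees coming from this spectral sequence:
$$
0\to E_2^{1,0}\to H^1([C^d/S_d],\mu_n)\to E_2^{0,1}\to E_2^{2,0}\to \ker\big(H^2([C^d/S_d],\mu_n)\to E_2^{0,2}\big)\to E_2^{1,1}\to\cdots.
$$
Here $E_2^{p,0}=H^p(\mt BS_d,\mu_n)$ with trivial action on $\mu_n$ (since $S_d$ acts trivially on the constant sheaf $\mu_n$), $E_2^{0,1}=H^1(C^d,\mu_n)^{S_d}$, $E_2^{1,1}=H^1(\mt BS_d,H^1(C^d,\mu_n))$, and $E_2^{0,2}=H^2(C^d,\mu_n)^{S_d}$. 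The hypothesis $H^1(C^d,\mu_n)=0$ kills $E_2^{0,1}$ and $E_2^{1,1}$ outright; it also forces the differential $E_2^{0,2}\to E_2^{2,1}$ to land somewhere harmless, but more importantly the terms feeding into and out of the relevant spot on the $p+q=2$ antidiagonal degenerate. Concretely, with $E_2^{0,1}=0$ the map $E_2^{2,0}\to H^2([C^d/S_d],\mu_n)$ is injective, its image is exactly the kernel of the edge map $H^2([C^d/S_d],\mu_n)\to E_2^{0,2}$, and the target of that edge map, when restricted to the image, is all of $E_\infty^{0,2}=\ker(d_2\colon E_2^{0,2}\to E_2^{2,1})$; but since $E_2^{1,1}=0$ as well, there is no $d_2$ or $d_3$ differential obstructing, so $E_\infty^{0,2}=\ker(d_2\colon E_2^{0,2}\to E_2^{2,1})$ and the sequence
$$
0\to H^2(\mt BS_d,\mu_n)\to H^2([C^d/S_d],\mu_n)\to H^2(C^d,\mu_n)^{S_d}
$$
is exact, which is exactly \eqref{brauerstack}. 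The identification $E_2^{2,0}=H^2(\mt BS_d,\mu_n)$ uses that cohomology of the classifying stack $BS_d$ computes group cohomology of $S_d$, which is the standard dictionary.

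The main obstacle I anticipate is bookkeeping around the edge maps: one must be careful that the composite $H^2(\mt BS_d,\mu_n)\to H^2([C^d/S_d],\mu_n)$ is the pullback along the structure map $[C^d/S_d]\to\mt BS_d$ and is genuinely injective (not just that $E_\infty^{2,0}$ is a subquotient), and that the map to $H^2(C^d,\mu_n)^{S_d}$ is the honest restriction along $q$. With $H^1(C^d,\mu_n)=0$ all the potentially interfering terms $E_2^{0,1}$, $E_2^{1,1}$, and $E_2^{0,1}$'s successor vanish, so the spectral sequence is about as degenerate as one could hope on this antidiagonal, and the argument reduces to quoting the exactness of the five-term sequence plus the two differential-vanishing observations. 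A minor point to address is that one should really run this for the $\mu_n$-cohomology (as stated) rather than $\mathbb G_m$, so that $H^1(C^d,\mu_n)=0$ is a hypothesis one can later verify; no Kummer sequence manipulation is needed at this stage.
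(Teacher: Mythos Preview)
Your argument is correct and is essentially the same as the paper's: the paper uses the Leray spectral sequence for the structure map $\pi\colon [C^d/S_d]\to\mt BS_d$, which has the same $E_2$-page $H^p(S_d,H^q(C^d,\mu_n))$ as your Hochschild--Serre sequence for the $S_d$-torsor $C^d\to[C^d/S_d]$, and then invokes the seven-term low-degree exact sequence together with the vanishing of $H^i(S_d,H^1(C^d,\mu_n))$ to conclude. Your exposition of the edge-map bookkeeping is slightly tangled in the middle paragraph, but the logic---$E_2^{0,1}=0$ forces $E_\infty^{2,0}=E_2^{2,0}$ and $E_2^{1,1}=0$ collapses the filtration on $H^2$ to two steps---is exactly what the paper uses.
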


\begin{proof}We set $\mathcal F:= \mu_{n}$. Consider the natural map $\pi:[C^d/S_d]\to \mt BS_d$ and the spectral sequence 
$$
H^p(\mt BS_d,R^q\pi_*\mathcal F)\to H^{p+q}([C^d/S_d],\mathcal F)
$$	
attached to it. This induces a 7-term exact sequence in low-degrees
\begin{alignat*}{2}
0 &\to H^1(\mt BS_d,\mathcal F)\to H^1([C^d/S_d],\mathcal F)\to H^0(\mt BS_d,R^1\pi_*\mathcal F)\to H^2(BS_d,\mathcal F)\to\\
& \to \text{ker}\left\{H^2([C^d/S_d],\mathcal F)\to H^0(\mt BS_d,R^2\pi_*\mathcal F)\right\}\to H^1(\mt BS_d,R^1\pi_*\mathcal F).
\end{alignat*}
Observe that $H^i(\mt BS_d,R^1\pi_*\mt F)=H^i(S_d,H^1(C^d,\mt F))=0$, by hypothesis. Then the lemma follows from the fact that $H^0(\mt BS_d,R^2\pi_*\mathcal F)=H^2(C^d,\mathcal F)^{S_d}.$
\end{proof}
The next lemma describes the second cohomology group of the schematic quotient.

\begin{lem}\label{lemma2}Let $C\to T$ be as above. If $n$ is an integer such that $H^1(C^d,\mu_n)=0$, then we have an inclusion of abelian groups
	$$
	H^2(Sym^dC,\mu_{n})\subset H^2(C^d,\mu_{n})^{S_d}.
	$$
	\end{lem}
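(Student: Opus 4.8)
The plan is to compare the schematic quotient $Sym^dC = C^d/S_d$ with the stacky quotient $[C^d/S_d]$ via the canonical morphism $q\colon [C^d/S_d]\to Sym^dC$, and to use Lemma \ref{lemma1} to control $H^2([C^d/S_d],\mu_n)$. The key point is that $q$ is a coarse-moduli-type map which is an isomorphism over the locus where $S_d$ acts with trivial stabilizers — namely the complement of the "big diagonal" $\Delta\subset C^d$ of points with repeated coordinates — and that this bad locus has codimension one, which unfortunately is not enough to directly invoke the purity results of Section \ref{prel}. So a more careful argument comparing the two quotients is needed.

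**Main steps.** First I would establish that the pullback $q^*\colon H^2(Sym^dC,\mu_n)\to H^2([C^d/S_d],\mu_n)$ is injective. This should follow because $q$ has a section generically (it is an isomorphism over the open dense substack $U = [C^d_{\circ}/S_d]$ where $C^d_\circ\subset C^d$ is the complement of the big diagonal), combined with the fact that $Sym^dC$ is regular and Noetherian, so restriction to a dense open is injective on $H^2(-,\mu_n)$ — here one uses Proposition \ref{openrestr} (or rather the absolute purity input behind Proposition \ref{opengeq2}) applied to $Sym^d C$ and its open $Sym^d C_{\circ} = C^d_\circ/S_d$, noting $q$ restricts to an isomorphism $U\cong Sym^dC_\circ$. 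Concretely: $H^2(Sym^dC,\mu_n)\hookrightarrow H^2(Sym^dC_\circ,\mu_n) = H^2(U,\mu_n)\to H^2([C^d/S_d],\mu_n)$, and the composite is $q^*$, so $q^*$ is injective. Second, by Lemma \ref{lemma1} we have the exact sequence
$$
0\to H^2(\mt BS_d,\mu_n)\to H^2([C^d/S_d],\mu_n)\xrightarrow{\rho} H^2(C^d,\mu_n)^{S_d}.
$$
Third, I would argue that the composite $\rho\circ q^*\colon H^2(Sym^dC,\mu_n)\to H^2(C^d,\mu_n)^{S_d}$ is itself injective, which gives the claimed inclusion. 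For this it suffices to show $\operatorname{im}(q^*)\cap H^2(\mt BS_d,\mu_n) = 0$ inside $H^2([C^d/S_d],\mu_n)$: the point is that a class coming from $Sym^dC$ is, by the injectivity in step one, detected on the open $Sym^dC_\circ$, whereas a nonzero class from $H^2(\mt BS_d,\mu_n)$ — which measures the gerbe-like stacky structure concentrated along the diagonal locus — restricts to zero on $U\cong Sym^dC_\circ$ because $U$ is an algebraic space there (the $S_d$-action is free on $C^d_\circ$, so $U$ has no stacky structure and $H^2(\mt BS_d,\mu_n)\to H^2(U,\mu_n)$ factors through the trivial gerbe). Hence the intersection is zero and $\rho\circ q^*$ is injective.

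**Main obstacle.** The delicate step is step one, the injectivity of $q^*$, precisely because the bad locus $\Delta$ has codimension one in $C^d$ and so its image has codimension one in $Sym^dC$ — this is too low for the clean purity isomorphism of Proposition \ref{opengeq2}, and one only gets \emph{injectivity} of restriction to the open complement from Proposition \ref{openrestr}, not an isomorphism. That injectivity is exactly what is needed, but one must make sure the commutative square relating $q$, the open immersions $U\hookrightarrow [C^d/S_d]$ and $Sym^dC_\circ\hookrightarrow Sym^dC$, and the isomorphism $q|_U\colon U\xrightarrow{\sim}Sym^dC_\circ$ is set up correctly so that $q^*$ factors as an injection followed by a pullback; this is essentially bookkeeping but must be done with care since $[C^d/S_d]$ is not regular in the naive sense and one is working on the lisse-étale site. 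A minor additional point is checking the hypothesis of Lemma \ref{lemma1}, i.e.\ that the integer $n$ can be taken so that $H^1(C^d,\mu_n)=0$ — but since the target Brauer group $\Br'(Sym^dC)$ is torsion, we only ever need to test against finitely many $n$ on any given class, and in the global situation $C = \mathcal C^d_{g,n}$ over $\Mg$ this vanishing holds for appropriate $n$ by the computations collected in Section \ref{someresults}.
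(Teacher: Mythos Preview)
Your overall structure---compare $Sym^dC$ with $[C^d/S_d]$ via $q^*$ and then invoke Lemma~\ref{lemma1}---is exactly what the paper does, but two of your three steps contain genuine gaps.

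\textbf{Step 1.} The restriction $H^2(Sym^dC,\mu_n)\to H^2(Sym^dC_\circ,\mu_n)$ need not be injective: the discriminant has codimension one, and Proposition~\ref{openrestr} concerns $\Br'=H^2(-,\mathbb G_m)_{\text{tors}}$, not $H^2(-,\mu_n)$. (Already for $\mathbb A^1\subset\mathbb P^1$, removing a point kills the generator of $H^2(\mathbb P^1,\mu_n)$.) The paper does prove that $q^*$ is injective on $H^2(-,\mu_n)$, but via the Kummer sequence: one sets up a ladder whose outer vertical maps are injective---on Picard groups because the split exact sequence $0\to\text{Hom}(S_d,\mathbb G_m)\to\Pic([C^d/S_d])\to\Pic(Sym^dC)\to0$ yields a split injection mod $n$, and on Brauer groups because $q$ is birational---and concludes by the snake lemma.

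\textbf{Step 3.} The more serious gap is the claim that a nonzero class from $H^2(\mt BS_d,\mu_n)$ restricts to zero on $U$. The composite $U\hookrightarrow[C^d/S_d]\to\mt BS_d$ is \emph{not} the trivial map: it classifies the \'etale $S_d$-torsor $C^d_\circ\to U$, which is genuinely nontrivial. That $U$ is an algebraic space only means the inertia of $[C^d/S_d]$ is trivial over $U$; it says nothing about the map $U\to\mt BS_d$, and the pullback $H^2(\mt BS_d,\mu_n)\to H^2(U,\mu_n)$ is the characteristic-class map of this cover, typically nonzero. This is precisely why the paper's argument is not formal: to show $\text{im}(q^*)\cap H^2(\mt BS_d,\mu_n)=0$ it restricts to a single fibre $X$ of $C\to T$ and reduces to the injectivity of $H^2(Sym^dX,\mu_n)\to H^2(X^d,\mu_n)$, which is then verified by an explicit topological computation---via the comparison theorem and Macdonald's description of $H^*(Sym^dX,\mathbb Z)$ (with the bases spelled out in \cite{BDH}) one checks that over $\mathbb Z$ the map is an inclusion of free groups with free cokernel, hence stays injective modulo $n$. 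That concrete input is the heart of the lemma and cannot be replaced by your formal argument on the free locus.
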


\begin{proof}Using the same sequence in the proof of Lemma \ref{lemma1} for the sheaf $ \mathbb G_m$, we get a short exact sequence
\begin{equation}\label{ex}
	0\to Hom(S_d,\mathbb G_m(T))\to\Pic([C^d/S_d])\to \Pic(C^d)^{S_n}=\Pic(Sym^dC)\to 0 .
\end{equation}
The right exactness follows because the map $\pi:[C^d/S_d]\to \mt BS_d$ has a section. Furthermore, the sequence splits canonically: the section $\Pic(Sym^dC)\to \Pic([C^d/S_d])$ is given by identifying $\Pic(Sym^dC)$ with the subgroup of line bundles of $[C^d/S_d]$, where the stabilisers act trivially. Using the Kummer sequence, we obtain a morphism of short exact sequences:
	$$
	\xymatrix{
		0\ar[r]&\frac{\Pic(Sym^dC)}{n\Pic(Sym^dC)}\ar[r]\ar[d]&H^2(Sym^dC,\mu_n)\ar[r]\ar[d] &\Br(Sym^dC)_n\ar[d]\ar[r]& 0\\
		0\ar[r] &\frac{\Pic([C^d/S_d])}{n\Pic([C^d/S_d])}\ar[r] &H^2([C^d/S_d],\mu_n)\ar[r] &\Br([C^d/S_d])_n\ar[r]& 0
	}
	$$
The third vertical arrow is injective because the stacky quotient and the schematic one are birationals. Observe that the first vertical arrow is a section of the exact sequence (\ref{ex}), tensored by $\mathbb Z/n\mathbb Z$, which remains injective because the sequence splits. By the snake lemma the arrow in the middle is also injective. By Lemma \ref{lemma1}, to prove the lemma it is enough to show that the image of the inclusion
$$
i:H^2(\mt BS_d,\mu_n)\to H^2([C^d/S_d],\mu_n)
$$
does not meet $H^2(Sym^dC,\mu_n)$. We will prove it by contradiction. Assume there exists $0\neq \alpha\in H^2(\mt BS_d,\mu_n)$ such that $i(\alpha)\in H^2(Sym^dC,\mu_n)$. Before we continue, we need the following observation, whose proofs are straightforward.
\begin{enumerate}[(i)]
	\item If $X$ is any geometric fibre of the morphism $C\to T$, then 
	$$
	H^2(\mt BS_d,\mu_n)\subset \text{ker}\left\{H^2([X^d/S_d],\mu_n)\to H^2(X^d,\mu_n)^{S_d}\right\}.
	$$
	\item The map above $H^2(\mt BS_d,\mu_n)\subset H^2([X^d/S_d],\mu_n)$ factors through $H^2([C^d/S_d],\mu_n)$.
\end{enumerate}
Consider the following commutative diagram
$$
\xymatrix{
H^2([C^d/S_d],\mu_n)\ar[r]&H^2([X^d/S_d],\mu_n)\\
H^2(Sym^dC,\mu_n)\ar@{^{(}->}[u]\ar[r]&H^2(Sym^dX,\mu_n)\ar[u]
}
$$
Since the first vertical map is an inclusion, point $(ii)$ implies $i(\alpha)|_{Sym^d X}\neq 0$. By point $(i)$, it follows that the map 
\begin{equation}\label{et}
H^2(Sym^dX,\mu_n)\to H^2(X^d,\mu_n)
\end{equation}
sends $i(\alpha)|_{Sym^d X}$ to zero. In particular, after choosing of an isomorphism $\mu_n\cong\mathbb Z/n\mathbb Z$, by the comparison theorem and the analogous map in the analytic setting
\begin{equation}\label{an}
H^2_{\mbox{an}}((Sym^dX)^{an},\mathbb Z/n\mathbb Z)\to H^2_{\mbox{an}}((X^d)^{an},\mathbb Z/n\mathbb Z)
\end{equation}
is not injective. We will show that this map must be injective, obtaining a contradiction and concluding the proof. Consider the pull-back map of cohomologies with coefficients in a $\mathbb Z$-module $M$:
\begin{equation}\label{MB}
H^2_{\mbox{an}}((Sym^dX)^{an},M)\to H^2_{\mbox{an}}((X^d)^{an},M).
\end{equation}
In \cite{Mac}, it has been proved that this map is an injective homomorphism of torsion-free groups, when $M=\mathbb Z$. Using the explicit description in \cite[Theorem 3.1]{BDH}, we can see that also the cokernel is free. Indeed, following the notation of \emph{loc. cit.}, a free basis for $H^2_{\mbox{an}}((X^d)^{an},\mathbb Z)$, resp. $H^2_{\mbox{an}}((Sym^dX)^{an},\mathbb Z)$, is given by
$$
\begin{cases}
 \lambda_i^j\cup\lambda_{i'}^{j'}, &\text{ for  }i\neq i' \text{ and }1\leq j<j'\leq d\\
 \eta^j, &\text{ for }1\leq j\leq d
\end{cases},\text{ resp. }
\begin{cases}
\sum_{j<j'}\lambda_i^j\cup\lambda_{i'}^{j'}-\lambda_{i'}^j\cup\lambda_{i}^{j'}, &\text{ for  }i\neq i',\\
\sum_j\eta^j.
\end{cases}
$$
By direct computation, the cokernel of (\ref{MB}) for $M=\mathbb Z$ is free with basis induced by
$$
\begin{cases}
\lambda_i^j\cup\lambda_{i'}^{j'}, &\text{ for  }i\neq i' \text{ and }2\leq j<j'\leq d,\\
\eta^j, &\text{ for }2\leq j\leq d.
\end{cases}
$$
In particular, if we apply $(-)^*:=\text{Hom}_{\mathbb Z}(-,\mathbb Z)$ to the map (\ref{MB}) when $M=\mathbb Z$, it becomes surjective. Then, by applying $\text{Hom}_{\mathbb Z}(-,\mathbb Z/n\mathbb Z)$, we obtain the injective homomorphism
\begin{equation}\label{dualMB}
\text{Hom}_{\mathbb Z}\big((H^2_{\mbox{an}}(Sym^dX)^{an},\mathbb Z)^*,\mathbb Z/n\mathbb Z\big)\to \text{Hom}_{\mathbb Z}\big(H^2_{\mbox{an}}((X^d)^{an},\mathbb Z)^*,\mathbb Z/n\mathbb Z\big).
\end{equation}
We will show that (\ref{dualMB}) is the same as the morphism (\ref{an}).
By the universal coefficient theorem, there exists an exact sequence
\begin{equation}\label{ufc}
0\to \text{Ext}^1_{\mathbb Z}(H_{i-1}(V,\mathbb Z),M)\to H^i(V,M)\to \text{Hom}_{\mathbb Z}(H_i(V,\mathbb Z),M)\to 0,
\end{equation}
where $M$ is a $\mathbb Z$-module and $V$ is either $(Sym^dX)^{an}$ or $(X^d)^{an}$. In particular, when $M=\mathbb Z$, the torsion subgroup of $H^i(V,\mathbb Z)$ is equal to the torsion subgroup of $H_{i-1}(V,\mathbb Z)$. Since all the groups $H^i(V,\mathbb Z)$ are free, all the homology groups $H_i(V,\mathbb Z)$ are free as well. Furthermore, the sequence (\ref{ufc}) gives
\begin{equation}\label{dual}
H^i(V,\mathbb Z)\cong H_i(V,\mathbb Z)^*,\text{ or equivalently } H^i(V,\mathbb Z)^*\cong H_i(V,\mathbb Z).
\end{equation}
In particular, when $M=\mathbb Z/n\mathbb Z$, the first term of the sequence (\ref{ufc}) vanishes and we have
\begin{equation}\label{hom}
H^i(V,\mathbb Z/n\mathbb Z)\cong \text{Hom}_{\mathbb Z}(H_i(V,\mathbb Z),\mathbb Z/n\mathbb Z).
\end{equation}
We remark that these isomorphisms are all functorial in $V$. Putting together (\ref{dual}) and (\ref{hom}) for $i=2$, we obtain the identification between (\ref{dualMB}) and (\ref{an}).
Then the map in (\ref{an}) not being injective is a contradiction, which shows that $\alpha$ must be $0$. 
\end{proof}

Now we can finally complete the proof of our theorem. We will use some results on $\Mg$ and on the $d$-th product of the universal curve $\Cid \rightarrow \Mg$ which are proven in the next section. \vspace{0.1cm}\\
\begin{dimo} \emph{Theorem \ref{brsym}}. Let $p: \Ci\to\Mg$ be the universal curve over the moduli space of $n$-marked curves. We have a natural morphism of exact sequences:
$$
\xymatrix{
0\ar[r]& \frac{\Pic(\Sym)}{k\Pic(\Sym)}\ar[r]\ar[d]& H^2(\Sym,\mu_k)\ar[r]\ar[d]&\Br'(\Sym)_k\ar[r]\ar[d]&0\\
0\ar[r]& \left(\frac{\Pic(\Cid)}{k\Pic(\Cid)}\right)^{S_d}\ar[r]& H^2(\Cid,\mu_k)^{S_d}\ar[r]&\left(\Br'(\Cid)_k\right)^{S_d}
}
$$
Observe first that, the $d$th-product $\Cid$ contains the moduli stack $\mathcal M_{g,n+d}$ (see Section \ref{app}). In particular, $\Br'(\Cid)\subset \Br'(\mathcal M_{g,n+d})$. We remark that $\Pic(\Sym)=\Pic(\Cid)^{S_d}$. This implies that the cokernel of the first vertical map is a subgroup of $H^1(S_d,\Pic(\Cid))$, which is zero by Proposition \ref{van}. So the first vertical map is surjective. 

By the snake lemma, to prove the Theorem it is enough to show that the arrow in the middle is injective. By Lemma \ref{lemma2}, we just need to check that the universal $d$-th product satisfies 
\begin{equation}\label{gr}
H^1(\mt C^d_{g,n},\mu_k)=0,
\end{equation}
for any $k$. The group above sits in the middle of the following exact sequence
\begin{equation}
0\to H^1(\Mg,\mu_k)\to 	H^1(\mt C^d_{g,n},\mu_k)\xrightarrow{f} H^0(\Mg,R^1q_*\mu_k),
\end{equation}
where $q:\Cid\to \Mg$ is the canonical morphism. So, it is enough to check the vanishing of the term on the left and of the image of $f$. The left-hand side group vanishes by Theorem \ref{cohmg}(i), when $g\geq 4$ and Remark \ref{cohmgrmk}, when $g=3$. Note that the $\text{Im}f$ in $H^0(\Mg,R^1p_*\mu_k)$ is equal to the $k$-torsion of relative Picard group $\Pic(\Cid)/\Pic(\Mg)$. This is zero due to the description of the the Picard group of $\mt C^d_{g,n}$ in Theorem \ref{pic}. 
\end{dimo}

\begin{rmk}\label{topol}
	We would like to remark a possible different route to proving (most of) the theorem, which the authors were following before coming up with the present argument. The objective is to prove that $\Br'(\Jc)=0$.
	
	In \cite{ErWill} J. Erbert and O. Randall-Williams use topological methods to compute the homology and cohomology of a class of homotopy spectra they denote as $\mathcal S^{n}_{g}(k)$, which classify families of surfaces of genus $g$ equipped with $n$ sections and a line bundle of degree $k$. They also show that when $n=0$ the spectrum $\mathcal S_{g}(k)$ is homotopically equivalent to the topological stack $\mathrm{Hol}^g_k$ which classifies families of Riemann surfaces of genus $g$ together with a line bundle of degree $k$. In section \cite[4.5]{ErWill} they state, without proof, that the analytic stack $\mathrm{Hol}^g_k$ should be isomorphic to the analyfication of $\mt J ac^k_g$.
	
	We have not proven it, but we believe a stronger version of the statements above to be true, i.e. that $\mathcal S^{n}_{g}(k)$ is homotopically equivalent to the topological stack $\mathrm{Hol}^g_{k,n}$, which classifies families of Riemann surfaces of genus $g$ together with $n$ sections and a line bundle of degree $k$, and that $\mathrm{Hol}^g_{k,n}$ is isomorphic to the analyfication of $\mt J ac^k_{g,n}$.
	
	Assuming this, we could proceed as follows. First, we pick an equivariant approximation $X$ of $\mt J ac^k_{g,n}$ in such a way that all of the invariants we are interested in (i.e. Picard and Brauer groups, étale and singular cohomology up to a sufficient degree) are maintained. Now on $X$, we use the exact sequence 
	$$ 0 \rightarrow H^2_{\text{sing}}(X,\mathbb{Z})/\Pic(X)\otimes \mathbb{Q}/\mathbb{Z} \rightarrow \Br'(X) \rightarrow H^3_{\text{sing}}(X,\mathbb{Z})_{\text{tors}} \rightarrow 0$$
	from \cite{Sch05} to conclude that $\Br'(X)$ is a subgroup of the torsion of $H^3(X,\mathbb{Z})$, which is in turn isomorphic to the torsion in $H^3(S_{g,n}(k),\mathbb{Z})$. 
	
	All that is left is to show that this group is trivial, which by the universal coefficient theorem is implied by $H_2(S_{g,n}(k),\mathbb{Z})$ being free. We use \cite[Theorem C]{ErWill}, or more specifically the argument in its proof, to prove our main theorem when $n=0, g \geq 5$, and theorem 1.8 of R. Cohen and I. Madsen's paper \cite{CohMad}, together with Hurewicz's theorem to prove the theorem in the case $g \geq 4, n > 0$. Note that this approach does not produce any result for $g \in \lbrace 3,4 \rbrace, n=0$ and $g = 3, n>0$, contrary to the more algebraic one we decided to follow.
\end{rmk}
\end{subsection}
\end{section}

\begin{section}{Some results on $\Mg$ and $\Cid$.}\label{someresults}

In this section we prove some auxiliary results concerning $\Mg$ and $\Cid$ that were used in the proof of our main theorem. Unless otherwise stated, through the section, we assume that $g\geq 3$ and $n\geq 0$.

\begin{subsection}{The groups $H^1(\Mg, \mu_k)$ and $\Br'(\Mg)$.}

We first prove some facts about the moduli stack of marked smooth curves $\Mg$. The following facts are probably well-known to experts, but no reference is known to the authors.

\begin{teo}\label{cohmg}Let $g,n\in\mathbb Z$, such that $g\geq 4$ and $n\geq 0$. We have the following:
\begin{enumerate}[(i)]
	\item $H^1(\Mg,\mu_k)=0$ for any $k$.
	\item $\Br'(\Mg)=0$.
\end{enumerate}
\end{teo}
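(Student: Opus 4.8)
The plan is to treat both statements via the Kummer sequence $1\to\mu_k\to\mathbb G_m\xrightarrow{(-)^k}\mathbb G_m\to 1$ on the lisse-\'etale site of $\Mg$, which produces the exact sequences
$$
0\to\mathcal O^*(\Mg)/k\to H^1(\Mg,\mu_k)\to\Pic(\Mg)[k]\to 0,
$$
$$
0\to\Pic(\Mg)/k\to H^2(\Mg,\mu_k)\to\Br'(\Mg)[k]\to 0.
$$
Since $\Br'(\Mg)$ is torsion by definition, it vanishes as soon as $\Br'(\Mg)[k]=0$ for every $k$, i.e.\ as soon as the left map in the second sequence is surjective for every $k$; similarly (i) amounts to killing both end terms of the first sequence. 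The inputs I would feed in are: (a) the Arbarello--Cornalba computation $\Pic(\Mg)\cong\mathbb Z^{n+1}$ (freely generated by the Hodge class $\lambda$ and the psi-classes $\psi_1,\ldots,\psi_n$), valid for $g\ge 3$, together with the $\mathbb Q$-linear independence of the boundary divisors of $\overline{\mathcal M}_{g,n}$ in $\Pic(\overline{\mathcal M}_{g,n})$; (b) the identification $\Mg^{an}\simeq[\mathcal T_{g,n}/\Gamma_{g,n}]$ of the analytification with the quotient of the contractible Teichm\"uller space by the mapping class group, so that the comparison theorem for Deligne--Mumford stacks gives $H^i(\Mg,\mu_k)\cong H^i(\Gamma_{g,n},\mathbb Z/k\mathbb Z)$ for all $i$; (c) Harer's computation of the integral homology of $\Gamma_{g,n}$ in degrees $\le 2$.

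For (i) I would argue as follows. By (a), $\Pic(\Mg)$ is torsion free, so $\Pic(\Mg)[k]=0$; and since the boundary divisors of $\overline{\mathcal M}_{g,n}$ are independent in its Picard group, the divisor of any $f\in\mathcal O^*(\Mg)$, viewed on $\overline{\mathcal M}_{g,n}$, must be zero, so $f$ is a global unit on the proper connected stack $\overline{\mathcal M}_{g,n}$, hence constant; thus $\mathcal O^*(\Mg)=\mathbb C^*$ and $\mathcal O^*(\Mg)/k=0$. Therefore $H^1(\Mg,\mu_k)=0$. (Equivalently, and under the same hypotheses: $H^1(\Mg,\mu_k)\cong H^1(\Gamma_{g,n},\mathbb Z/k\mathbb Z)=\text{Hom}(H_1(\Gamma_{g,n},\mathbb Z),\mathbb Z/k\mathbb Z)=0$, since $\Gamma_{g,n}$ is perfect for $g\ge 3$ by Powell's theorem.) This works already for $g\ge 3$, which is the content of Remark~\ref{cohmgrmk}.

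For (ii): by (a), $\Pic(\Mg)/k\cong(\mathbb Z/k\mathbb Z)^{n+1}$, a finite group of order $k^{n+1}$. On the other hand, by (b) and (c) one has $H_1(\Gamma_{g,n},\mathbb Z)=0$ and $H_2(\Gamma_{g,n},\mathbb Z)\cong\mathbb Z^{n+1}$ torsion free for $g\ge 4$; the universal coefficient theorem then gives $H^2(\Gamma_{g,n},\mathbb Z)\cong\mathbb Z^{n+1}$, and $H^3(\Gamma_{g,n},\mathbb Z)$ is torsion free since its torsion subgroup is $\text{Ext}^1_{\mathbb Z}(H_2(\Gamma_{g,n},\mathbb Z),\mathbb Z)=\text{Ext}^1_{\mathbb Z}(\mathbb Z^{n+1},\mathbb Z)=0$. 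Plugging this into the coefficient sequence $0\to H^2(\Gamma_{g,n},\mathbb Z)/k\to H^2(\Gamma_{g,n},\mathbb Z/k\mathbb Z)\to H^3(\Gamma_{g,n},\mathbb Z)[k]\to 0$ yields, via (b), $H^2(\Mg,\mu_k)\cong(\mathbb Z/k\mathbb Z)^{n+1}$, again of order $k^{n+1}$. So in the second Kummer sequence $\Pic(\Mg)/k\hookrightarrow H^2(\Mg,\mu_k)$ is an injection between finite groups of equal order, hence an isomorphism, whence $\Br'(\Mg)[k]=0$ for all $k$ and $\Br'(\Mg)=0$.

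I expect the whole argument to be essentially bookkeeping once the inputs are in hand, so the one real obstacle is securing those inputs: one needs the low-degree integral homology of the mapping class group, and in particular the torsion-freeness of $H^3(\Gamma_{g,n},\mathbb Z)$ --- equivalently of $H_2(\Gamma_{g,n},\mathbb Z)$ --- which is exactly where the hypothesis $g\ge 4$ is forced; in genus $3$ only part (i) survives. A secondary, minor point is the compatibility of the comparison isomorphism with the first Chern class (so that the algebraic map $\Pic(\Mg)/k\to H^2(\Mg,\mu_k)$ matches the mod-$k$ topological $c_1$): this is standard, and in fact the order count above makes it unnecessary to invoke.
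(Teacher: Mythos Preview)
Your argument is correct and rests on the same topological inputs the paper uses --- the Arbarello--Cornalba description of $\Pic(\Mg)$, the perfectness of $\Gamma_{g,n}$ (Harer/Powell), and the torsion-freeness of $H_2(\Gamma_{g,n},\mathbb Z)$ for $g\ge 4$ (Korkmaz--Stipsicz) --- but the packaging differs. For (i) the paper goes straight through the comparison theorem and the universal coefficient theorem applied to $H^1$; you give in addition a purely algebraic variant via $\mathcal O^*(\Mg)=\mathbb C^*$ and torsion-freeness of $\Pic$, which the paper does not mention. For (ii) the paper first identifies the algebraic and analytic Brauer groups (via a Kummer/comparison diagram) and then applies Schr\"oer's exact sequence $0\to (H^2/\Pic)\otimes\mathbb Q/\mathbb Z\to \Br_{\mathrm{an}}\to H^3_{\mathrm{tors}}\to 0$, killing both ends with Arbarello--Cornalba and Korkmaz--Stipsicz respectively. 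You instead stay entirely in the algebraic Kummer sequence and conclude by an order count between $\Pic(\Mg)/k$ and $H^2(\Mg,\mu_k)\cong H^2(\Gamma_{g,n},\mathbb Z/k\mathbb Z)$. Your route is marginally more elementary in that it avoids invoking Schr\"oer's result on the analytic Brauer group; the paper's route has the virtue of making transparent exactly which piece fails at $g=3$ (the unknown torsion in $H^3_{\mathrm{an}}$), which is the content of Remark~\ref{cohmgrmk}.
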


\begin{proof}
$(i)$. Since we are working over the complex numbers, there is a non-canonical isomorphism $H^1(\Mg,\mu_k)=H^1(\Mg,\mathbb Z/k\mathbb Z)$. By the comparison theorem, the latter group is isomorphic to the first cohomology group in the analytic category $H^1_{\mbox{an}}(\Mg^{an},\mathbb Z/k\mathbb Z)$, where $\Mg^{an}$ is the underlying analytic orbifold associated to $\Mg$. By the universal coefficient theorem, we have an exact sequence
\begin{equation}
0\to\text{Ext}^1_{\mathbb Z}(H_0(\Mg,\mathbb Z),\mathbb Z/k\mathbb Z)\to H^1_{\mbox{an}}(\Mg^{an},\mathbb Z/k\mathbb Z)\to \text{Hom}_{\mathbb Z}(H_1(\Mg,\mathbb Z),\mathbb Z/k\mathbb Z)\to 0.
\end{equation}
So the vanishing is due to $H_0(\Mg^{an},\mathbb Z)=\mathbb Z$ and $H_1(\Mg^{an},\mathbb Z)=0$. The latter equality is a consequence of \cite[Lemma 1.1]{Har83}, see also \cite[Appendix]{AC87}.\\
$(ii)$. Using the Kummer sequence in the analytic and algebraic category, we obtain a natural morphism of exact sequences
$$
\xymatrix{
	\Pic(\Mg) \ar[d] \ar[r]^{\cdot k} & \Pic(\Mg) \ar[d] \ar[r] & H^2(\Mg, \mu_k) \ar[r] \ar[d] & \Br'(\Mg)_k \ar[r] \ar[d] & 0\\
	\Pic(\Mg^{an}) \ar[r]^{\cdot k} & \Pic(\Mg^{an}) \ar[r] & H^2_{\mbox{an}}(\Mg^{an}, \mu_k) \ar[r] & \Br_{\mbox{an}}(\Mg^{an})_k \ar[r] & 0\\
}
$$
where $\Br_{\mbox{an}}(\Mg^{an})_k$ is the subgroup of $k$-torsion elements of $H^2_{\mbox{an}}(\Mg^{an},\oo_{\mbox{an}}^\times)$. The Arbarello-Cornalba computation of the Picard group \cite{AC87} of the moduli space of marked curves works either in the analytic category or in the algebraic one. In particular, it implies that the first two vertical maps are isomorphisms. The third one is an isomorphism because of the comparison theorem. By the snake lemma, we have
$$
\Br'(\Mg)=\Br_{\mbox{an}}(\Mg^{an}).
$$
So, we need to show that the right-hand side vanishes. By \cite[Proposition 1.1]{Sch05}, we have an exact sequence of groups
$$
0\to \frac{H^2(\Mg^{an},\mathbb Z)}{\Pic(\Mg^{an})}\otimes\mathbb Q/\mathbb Z\to \Br_{\mbox{an}}(\Mg^{an})\to H^3_{\mbox{an}}(\Mg^{an},\mathbb Z)_{\text{tors}}\to 0.
$$
The vanishing of the first group is a consequence of \cite{AC87}. By the universal coefficient theorem,
$$
H^3_{an}(\Mg^{an},\mathbb Z)_{\text{tors}}=H_2(\Mg,\mathbb Z)_{\text{tors}},
$$
and the latter group is $0$ by \cite[Theorem 1]{KS}. This concludes the proof.
\end{proof}

\begin{rmk}\label{cohmgrmk} The proof of point $(i)$ works without any change when $g=3$. Repeating the same proof of point $(ii)$, when $g=3$, we can prove that $\Br'(\mt M_{3,n})$ is contained in the the torsion part of $H^3_{an}(\mt M^{an}_{3,n},\mathbb Z)$, which is unknown at the moment. For partial results in this direction, see \cite{KS}.
\end{rmk}

\end{subsection}

\begin{subsection}{The $S_d$-action on $\Pic(\Cid)$.}\label{app}
In this subsection, we prove some facts about the relative $d$-th product:
$$\Ci^{d}:=\underbrace{\Ci\times_{\Mg}\ldots\times_{\Mg} \Ci}_{d \text{ times}}$$
of the universal curve $\Ci\to\Mg$ of the moduli stack $\Mg$.

 The product $\Cid$ can be equivalently thought of as the moduli stack of objects $(C,x_1,\ldots,x_{n+d})$, where $C$ is a smooth curve of genus $g$ and $x_1,\ldots,x_{n+d}$ are points of $C$ such that $x_i\neq x_j$ for any $i$ and $j$ bigger than $d$. We call such stack \emph{Universal $d$-th Product of the Universal Curve over $\Mg$}.
	
	It is an irreducible noetherian smooth Deligne-Mumford stack. It contains the moduli stack $\mathcal M_{g,n+d}$ as the open substack of pointed curves where all the points are distinct. The complement has codimension one and the irreducible components are the divisors
	$$
	\mt D_{i,j}:=\left\{(C,x_1,\ldots,x_{n+d})\in\Cid|x_i=x_j\right\},
	$$
	for $1\leq i\leq d$ and $i<j\leq n+d$. We denote by the same symbols the associated line bundles. Now, since $\Cid$ is a moduli stack, it carries a universal object, i.e. it has a universal curve $\pi: \mt X\to\Cid$ and $n+d$ sections, which we denote by $s_i$ for $1\leq i\leq n+d$. Using these data, we define the following line bundles $$\Psi_i:=s_i^*\omega,$$
	where $\omega$ is the relative cotangent bundle of the universal curve $\mt X\to  \Cid$. We have the following
	\begin{teo}\label{pic}Assume $d\geq 2$. The Picard group of $\Cid$ is freely generated by $\mt D_{i,j}$ for $1\leq i\leq d$ and $i<j\leq n+d$, $\Psi_k$ for $1\leq k\leq n+d$ and the Hodge line bundle $\lambda$.
	\end{teo}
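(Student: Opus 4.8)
The plan is to compute $\Pic(\Cid)$ by induction on $d$, building up one point at a time, starting from the known description of $\Pic(\Mg)$ (the Arbarello--Cornalba result: freely generated by the Hodge class $\lambda$ and the $\Psi_i$ for $1\le i\le n$, at least in the range $g\ge 3$). The key tool is that adding a point corresponds to pulling back the universal curve, so there is a sequence of representable morphisms
$$
\Ci^{d}\xrightarrow{\ p_d\ }\Ci^{d-1}\xrightarrow{\ \ }\cdots\xrightarrow{\ \ }\Ci^{1}=\Ci\xrightarrow{\ p_1\ }\Mg,
$$
where each $p_m:\Ci^{m}\to\Ci^{m-1}$ is (up to base change) the universal curve over the stack $\Ci^{m-1}$, i.e. a smooth proper representable morphism with geometrically integral one-dimensional fibres. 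First I would record the general fact that for such a family $f:\mt X\to T$ with a section (here we have plenty of sections $s_1,\dots,s_{m-1}$ coming from the previous points, which for $m\ge 2$ gives $T$-points of $\mt X$), the relative Picard functor is representable and one has a split short exact sequence
$$
0\to \Pic(T)\to \Pic(\mt X)\to \Pic_{\mt X/T}(T)\to 0,
$$
and that $\Pic_{\mt X/T}(T)$ decomposes as $\mathbb Z$ (generated by the relative dualizing sheaf $\omega_f$, or equivalently by a fibre-degree-$1$ line bundle) together with the ``continuous'' part, which is the relative Jacobian $\Pic^{0}_{\mt X/T}(T)$. The crux is to show this last group vanishes.

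Thus the heart of the argument is: for $m\ge 1$, the only line bundles on $\Ci^{m}$ that restrict to degree-zero line bundles on the fibres of $p_m$ are pulled back from $\Ci^{m-1}$, i.e. $\Pic^0_{\Ci^m/\Ci^{m-1}}(\Ci^{m-1})=0$, and moreover the section classes account for the degree. Concretely, over $\Ci^{m-1}$ the universal curve carries $m-1$ tautological sections $s_1,\dots,s_{m-1}$ (the images of the first $m-1$ points) together with the $n$ marked sections, giving divisors $\mt D_{i,m}$ for $i<m$ (the locus where the new point collides with an earlier one) and $\Psi_m=s_m^*\omega$; one checks each $\mt D_{i,m}$ has fibre degree $1$ and the difference $\mt D_{i,m}-\mt D_{j,m}$ is degree $0$. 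I would establish the vanishing of the relative $\Pic^0$ by a deformation-theoretic / valuative argument: a degree-zero line bundle on the generic fibre which extends to all fibres defines a section of the relative Jacobian $\mathcal J\to\Ci^{m-1}$; since $\Ci^{m-1}$ is a smooth stack of finite type and the universal such section would have to be compatible with all geometric specializations, one reduces to showing there is no nonzero such section generically — equivalently, over the generic point of $\Mg$ the group $\Pic^0$ of the $m$-fold universal curve (with its marked points as possible base) contains no nontrivial point defined over the function field. This is where I expect the main obstacle to lie, because one must rule out "hidden" degree-zero bundles built from the marked sections; the essential input is that on a \emph{very general} curve $C$ the points $x_1,\dots,x_{n+d}$ are algebraically independent, so no $\mathbb Z$-linear combination $\sum a_i x_i$ with $\sum a_i=0$ is a principal divisor, i.e. is trivial in $\Pic^0(C)$ — a statement that follows from the injectivity of the Abel--Jacobi map on a very general curve together with the fact that the points give a point of $\Pic^0$ of infinite order, or more robustly from a monodromy argument on $\Ci^{m}\to\Mg$ showing the mapping-class-group action on $H_1$ of the fibre has no invariants.

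Granting the vanishing of the relative $\Pic^0$ at each stage, the induction closes: the split sequence gives
$$
\Pic(\Ci^{m})\cong \Pic(\Ci^{m-1})\ \oplus\ \mathbb Z\cdot\Psi_m\ \oplus\ \bigoplus_{i=1}^{m-1}\mathbb Z\cdot\mt D_{i,m}
$$
for $m\ge 2$ (and $\Pic(\Ci)\cong\Pic(\Mg)\oplus\mathbb Z\cdot\Psi_1$ for $m=1$, using that for a family \emph{without} a rigidifying section one still gets the $\mathbb Z$ from $\omega$ in the range $g\ge 3$), where I take $\Psi_m$ rather than $\omega_{p_m}$ as the chosen degree-$1$ generator and note $\omega_{p_m}$ differs from $\Psi_m$ by section classes, so the change of basis is unimodular. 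Unwinding the recursion from $m=d$ down to $m=1$ and then to $\Mg$, and using Arbarello--Cornalba for $\Pic(\Mg)=\langle\lambda,\Psi_1,\dots,\Psi_n\rangle$, yields precisely that $\Pic(\Cid)$ is freely generated by $\lambda$, the $\Psi_k$ for $1\le k\le n+d$, and the $\mt D_{i,j}$ for $1\le i\le d$, $i<j\le n+d$. The hypothesis $d\ge 2$ enters because for $d=1$ there are no collision divisors and one only has $\lambda$ and the $\Psi$'s; I would also double-check the bookkeeping of which $\Psi$'s appear — the $n$ original marking sections plus one new $\Psi_m$ at each step give $\Psi_1,\dots,\Psi_{n+d}$ — and that no torsion is introduced, which is automatic since each extension in the induction is by a free group and splits.
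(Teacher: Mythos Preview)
Your inductive strategy is reasonable in outline, but the central claim is wrong: you assert that the crux is showing $\Pic^{0}_{\Ci^m/\Ci^{m-1}}(\Ci^{m-1})=0$, yet this group is \emph{nonzero} whenever the family has sections, since the differences $\mathcal O(s_i)\otimes\mathcal O(s_j)^{-1}$ are nontrivial sections of the relative Jacobian. Indeed, your own argument (``no $\mathbb Z$-linear combination $\sum a_i x_i$ with $\sum a_i=0$ is principal on a very general curve'') establishes precisely that these classes are nonzero, the opposite of the vanishing you want. What is actually needed at each step is the \emph{strong Franchetta} statement: the relative Picard of $p_m$ is freely generated by $\omega_{p_m}$ together with all $(m-1)+n$ section classes, coming from the previous product points \emph{and} the $n$ markings. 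Your monodromy sketch does not prove this either; absence of $H_1$-invariants rules out torsion sections of the relative Jacobian, but infinite-order sections such as $[s_i]-[s_j]$ are not detected that way. A bookkeeping error then follows: your inductive formula omits the $n$ collision divisors $\mt D_{m,j}$ for $d<j\le d+n$ (new product point meets a marking), so for $n>0$ your count is off by $n$ at every step, and already the base case $\Pic(\Ci)\cong\Pic(\Mg)\oplus\mathbb Z\Psi_1$ is false when $n>0$.

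The paper's proof is quite different and much shorter: for $n=0$ it simply cites Kouvidakis \cite{Kou91}; for $n>0$ it observes that $\Cid$ is the open substack of $\mathcal C^{d+n}_{g,0}$ where the last $n$ points are pairwise distinct, so the restriction map $\Pic(\mathcal C^{d+n}_{g,0})\twoheadrightarrow\Pic(\Cid)$ just kills the divisors $\mt D_{i,j}$ with $d<i<j\le d+n$, and freeness follows immediately. Your approach, once repaired by invoking strong Franchetta correctly and restoring the missing marking-collision divisors, would essentially reproduce Kouvidakis's inductive computation rather than cite it.
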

	
	\begin{proof}When $n=0$, a proof can be found in \cite[pp. 843-844]{Kou91}. The case $n>0$ follows from the previous one, by identifying $\Cid$ as an open of $\mt C^{d+n}_{g,0}$ and then using the restriction map for Picard groups.
	\end{proof}
	
	The symmetric group $S_d$ acts fiberwise on $\Cid\to \Mg$ by permuting the factors. Using the moduli interpretation above, it acts by permuting the first $d$ points and fixing the others. This	induces an action on the Picard group of $\Cid$. The rest of the subsection is devote to prove the following.
	
	\begin{prop}\label{van}Assume $d\geq 2$. The cohomology group $H^1(S_d,\Pic(\Cid))$ is trivial.
	\end{prop}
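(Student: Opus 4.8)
The plan is to feed the explicit description of $\Pic(\Cid)$ from Theorem \ref{pic} into the elementary fact that the first cohomology of a finite group with coefficients in a permutation module vanishes. First I would unwind the $S_d$-action on the chosen free generators. Since $S_d$ only permutes the first $d$ marked points, the Hodge bundle $\lambda$ and the classes $\Psi_k$ for $d<k\le n+d$ are fixed. The classes $\Psi_1,\dots,\Psi_d$ are permuted among themselves according to the tautological action of $S_d$ on $\{1,\dots,d\}$. For the boundary classes one distinguishes two families: if $1\le i<j\le d$ then $S_d$ permutes the $\mt D_{i,j}$ according to its action on unordered pairs from $\{1,\dots,d\}$; and if $i\le d<j$ then, for each fixed $j$, the classes $\mt D_{1,j},\dots,\mt D_{d,j}$ are permuted according to the action on $\{1,\dots,d\}$ (note $i<j$ is automatic here). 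The one point to check carefully is that this is a genuine \emph{permutation} action on the basis, with no characters or signs intervening: each generator is the line bundle attached to an honest effective divisor, or the pullback of the relative dualising sheaf along a section, and the group action merely relabels these divisors and sections.

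Granting this, $\Pic(\Cid)$ decomposes as an $S_d$-module into a direct sum of permutation modules, orbit by orbit: a trivial summand $\mathbb Z^{\,n+1}$ spanned by $\lambda$ and $\Psi_{d+1},\dots,\Psi_{d+n}$; $n+1$ copies of $\mathbb Z[S_d/S_{d-1}]$, namely the one spanned by $\Psi_1,\dots,\Psi_d$ together with, for each $j\in\{d+1,\dots,d+n\}$, the block spanned by $\mt D_{1,j},\dots,\mt D_{d,j}$; and one copy of $\mathbb Z[S_d/(S_2\times S_{d-2})]$ spanned by the $\mt D_{i,j}$ with $1\le i<j\le d$, where $S_2\times S_{d-2}$ is the stabiliser of the pair $\{1,2\}$. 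The hypothesis $d\ge2$ is exactly what guarantees that this last family is nonempty and well defined; a rank count of the three types of summands matches the rank of $\Pic(\Cid)$ and can be used as a sanity check.

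Now I would compute, using that group cohomology commutes with finite direct sums, so it suffices to handle each summand. For the trivial part, $H^1(S_d,\mathbb Z)=\operatorname{Hom}(S_d,\mathbb Z)=0$. For each induced module, Shapiro's lemma gives $H^1(S_d,\mathbb Z[S_d/H])\cong H^1(H,\mathbb Z)=\operatorname{Hom}(H,\mathbb Z)=0$, since the subgroup $H$ (either $S_{d-1}$ or $S_2\times S_{d-2}$) is finite and $\mathbb Z$ is torsion free. Adding these up yields $H^1(S_d,\Pic(\Cid))=0$, which is the claim.

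I do not expect a real obstacle in this argument: the entire content is the bookkeeping in the first paragraph, i.e.\ identifying the orbits of $S_d$ on the basis of Theorem \ref{pic} and the corresponding point stabilisers, after which the vanishing is a formal consequence of Shapiro's lemma and the triviality of $\operatorname{Hom}(-,\mathbb Z)$ on finite groups.
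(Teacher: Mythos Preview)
Your argument is correct, and the identification of the $S_d$-module structure on $\Pic(\Cid)$ is exactly the one the paper carries out: the trivial summand $\mathbb Z^{n+1}$, the $n+1$ copies of the standard permutation module $T_d\cong\mathbb Z[S_d/S_{d-1}]$, and the module $U_d\cong\mathbb Z[S_d/(S_2\times S_{d-2})]$ on unordered pairs. The one genuine difference lies in how the vanishing of $H^1$ on each summand is established. The paper proves a separate lemma showing $H^1(S_d,\mathbb Z_{\mathrm{Triv}})=H^1(S_d,T_d)=H^1(S_d,U_d)=0$ by hand, using explicit crossed-homomorphism computations on the Coxeter generators $\sigma_i$, and for $U_d$ an induction on $d$ together with the restriction map to $S_{d-1}$. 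Your route, recognising each piece as a permutation module $\mathbb Z[S_d/H]$ and invoking Shapiro's lemma to reduce to $H^1(H,\mathbb Z)=\operatorname{Hom}(H,\mathbb Z)=0$, is shorter and more conceptual; it also makes transparent why no special features of $S_d$ are needed beyond finiteness. What the paper's computation buys is self-containment (no appeal to Shapiro) and, incidentally, an explicit description of which principal crossed homomorphism kills a given cocycle; what your approach buys is a one-line proof that works uniformly for any permutation lattice of a finite group.
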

	
We denote by $T_d$ the standard representation of $S_d$ on $\mathbb Z^d$. The group $S_d$ acts on the canonical basis $e_1,\ldots,e_d$ as it follows: $\tau.e_i=e_{\tau(i)}$ for any $\tau\in S_d$.

The symmetric product $Sym^2T_d$ has an $S_d$-action induced by the standard representation. The canonical basis  of $T_d$ induces a basis $\left\{e_{i,j}:=e_i\otimes e_j+e_j\otimes e_i\right\}_{1\leq i\leq j\leq d}$ for $Sym^2T_{d}$. The action on these elements is given by 
$$
\tau.e_{i,j}=\begin{cases}
e_{\tau(i),\tau(j)}, & \text{ if }\tau(i)\leq\tau(j),\\
e_{\tau(j),\tau(i)}, &\text{ if }\tau(i)>\tau(j),
\end{cases}
$$ for any $\tau\in S_d$. By the fact that $\tau(i)=\tau(j)$ if and only if $i=j$, we have a splitting of representations:
$$
Sym^2T_d\cong T_d\oplus U_d,
$$
where $T_d$ is the standard representation generated by $e_{i,i}$ for $1\leq i\leq d$ and $U_d$ is the subrepresentation generated by $e_{i,j}$ for $1\leq i<j\leq d$.\\

Looking at the action of $S_d$ on the Picard group of $\Cid$, we see that
	\begin{enumerate}[(i)]
		\item  $S_d$ acts trivially on $\lambda$,
		\item if $i\leq d$ then $\tau.\Psi_i=\Psi_{\tau(i)}$, for any $\tau\in S_d$,
		\item if $i> d$ then $\tau.\Psi_i=\Psi_{i}$, for any $\tau\in S_d$,
		\item if $j\leq d$ then $$\tau.\mt D_{i,j}=\begin{cases}
D_{\tau(i),\tau(j)}, & \text{ if }\tau(i)\leq\tau(j),\\
D_{\tau(j),\tau(i)}, &\text{ if }\tau(i)>\tau(j),
\end{cases}$$
for any $\tau\in S_d$.
		\item if $j>d$ then $\tau.\mt D_{i,j}=\mt D_{\tau(i),j}$, for any $\tau\in S_d$.
	\end{enumerate}
So, $\langle \lambda \rangle$ is the trivial representation $\mathbb Z_{Triv}$. By $(ii)$, $\langle\Psi_i|1\leq i\leq d\rangle$ is the standard representation $T_d$. By $(iii)$, for any $d<i\leq n+d$, the group $\langle\Psi_i\rangle$ is the trivial representation. By $(iv)$, $\langle \mt D_{i,j}|1\leq i<j\leq d\rangle$ is the representation $U_d$. By $(v)$, for $d<j\leq n+d$, the subgroup $\langle \mt D_{i,j}|1\leq i\leq d\rangle$ is the standard representation. Putting all together, we have an isomorphism of $S_d$-representations
	$$
	\Pic(\Cid)\cong\mathbb Z_{Triv}^{n+1}\oplus U_d\oplus T_d^{\oplus {n+1}}.
	$$
	So, for proving the proposition \ref{van}, it is enough to show the following
	
	\begin{lem}Assume $d\geq 2$. The cohomology groups
		 $$
		 (i)\,H^1(S_d,\mathbb Z_{Triv}),\quad (ii)\,H^1(S_d,T_d),\quad (iii)\,H^1(S_d,U_d)
		 $$
		are all trivial.
	\end{lem}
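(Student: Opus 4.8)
The plan is to recognize each of the three $S_d$-modules as a permutation module of the form $\mathbb Z[S_d/H]$ for a suitable subgroup $H\leq S_d$, and then apply Shapiro's lemma to reduce each computation to $H^1(H,\mathbb Z)$ with trivial coefficients, which vanishes since $H$ is finite. Explicitly: for $(i)$ we have $\mathbb Z_{Triv}=\mathbb Z[S_d/S_d]$; for $(ii)$, the standard representation $T_d=\mathbb Z^d$ is the permutation module on the $d$-element set $\{1,\dots,d\}$, i.e. $T_d\cong\mathbb Z[S_d/S_{d-1}]$, where $S_{d-1}$ is the stabiliser of the point $d$; and for $(iii)$, the module $U_d$ has $\mathbb Z$-basis $\{e_{i,j}\}_{1\leq i<j\leq d}$ indexed by the two-element subsets of $\{1,\dots,d\}$, and the formula for the $S_d$-action recorded just above shows that $S_d$ permutes this basis exactly as it permutes unordered pairs, so $U_d\cong\mathbb Z[S_d/(S_2\times S_{d-2})]$, where $S_2\times S_{d-2}$ is the stabiliser of the pair $\{1,2\}$.

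Given these identifications, the key step is the computation $H^1(G,\mathbb Z)=0$ for any finite group $G$ acting trivially on $\mathbb Z$. This follows either directly, since $H^1(G,\mathbb Z)=\operatorname{Hom}(G,\mathbb Z)=0$ because $\mathbb Z$ is torsion-free, or from the long exact sequence attached to $0\to\mathbb Z\to\mathbb Q\to\mathbb Q/\mathbb Z\to 0$ together with $H^i(G,\mathbb Q)=0$ for $i>0$. Combining this with Shapiro's lemma, $H^1(S_d,\mathbb Z[S_d/H])\cong H^1(H,\mathbb Z)=0$ for $H$ equal to $S_d$, $S_{d-1}$, and $S_2\times S_{d-2}$ respectively, which yields the three claimed vanishings.

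The only thing requiring a quick sanity check is the behaviour at the edge of the range: for $d=2$ the module $U_d$ is just $\mathbb Z$ (a single pair) with $S_2$ acting trivially, so $(iii)$ reads $H^1(S_2,\mathbb Z)=0$, and $T_2\cong\mathbb Z[S_2/S_1]$ still fits the pattern; for $d=3$ the subgroup $S_{d-2}=S_1$ is trivial and the argument is unchanged. I do not expect a genuine obstacle: essentially all the content is in identifying the three modules as induced permutation modules, after which the cohomology vanishing is purely formal.
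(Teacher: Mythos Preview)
Your proof is correct and takes a genuinely different route from the paper's argument. The paper proceeds by direct cocycle computations: for $(ii)$ it writes out a crossed homomorphism on the Coxeter generators $\sigma_i$, uses the relations $\sigma_i^2=\mathrm{Id}$ to pin down $f(\sigma_i)$, and then exhibits an explicit $m\in T_d$ with $p_m=f$; for $(iii)$ it runs an induction on $d$, showing that the restriction $H^1(S_d,U_d)\to H^1(S_{d-1},U_d)$ is injective and then splitting $U_d\cong T_{d-1}\oplus U_{d-1}$ as $S_{d-1}$-modules. By contrast, you observe once and for all that $\mathbb Z_{\mathrm{Triv}}$, $T_d$, and $U_d$ are the permutation modules on the $S_d$-sets with one element, $d$ elements, and $\binom{d}{2}$ elements respectively, and then invoke Shapiro's lemma together with $H^1(H,\mathbb Z)=\operatorname{Hom}(H,\mathbb Z)=0$ for finite $H$. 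Your approach is shorter and more conceptual, and it makes transparent why the vanishing holds (permutation modules are cohomologically innocuous in degree one); the paper's approach is more elementary in that it avoids appealing to Shapiro's lemma, at the cost of longer explicit verifications. Both are complete; nothing is missing from yours.
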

	
	\begin{proof}In general, given a $S_d$-representation $M$, the group $H^1(S_d,M)$ is the group of the \emph{crossed homomorphisms} modulo the subgroup of \emph{principal crossed homomorphisms}. A crossed homomorphism is defined as a map $f:S_d\to M$ such that $f(\tau\sigma)=f(\tau)+\tau f(\sigma)$ for $\tau$, $\sigma\in S_d$. A principal crossed homomorphism is a crossed homomorphism such that exists an $m\in M$ such that $f(\tau)=\tau.m-m$ for any $\tau\in S_d$.
			
	$(i)$. The symmetric group $S_d$ acts trivially on $\mathbb Z_{Triv}$, we have $H^1(S_d,\mathbb Z_{Triv})=\text{Hom}(S_d,\mathbb Z)$, which is zero.
		
		$(ii)$. Let $f$ be a crossed homomorphism. For $1\leq i\leq d-1$, let $\sigma_i$ be the permutation that switches $i$ and $i+1$ and fixes the other coordinates. It is known that these generate the group $S_d$. Since $\sigma_i^2=Id$, we see that 
		\begin{equation}\label{eq}
		f(\sigma_i)=a_i(e_i-e_{i+1}),\quad a_i\in\mathbb Z,
		\end{equation}
		where $e_1,\ldots, e_d$ are the canonical generators of $T_d$. Consider the element 
		$$
		m:=\sum_{j=1}^{d-1}\left(\sum_{k=1}^ja_k\right)e_{j+1}.
		$$
		By direct computation, we have that $p_m(\sigma_i)=f(\sigma_i)$ for any $i$. Here $p_m$ is the principal crossed homomorphism induced by $m$. Since $\sigma_i$ are generators for $S_d$, this implies that $p_m=f$ and so is zero in $H^1(S_d,T_d)$.
		
		$(iii)$. We will give a proof by induction on $d$. Case $d=2$. Then $U_2$ is the trivial representation. So the result follows from $(i)$. Now, assume that $H^1(S_{d-1}, U_{d-1})$ is zero. We want to show that the same happens to $S_d$. The group $S_{d-1}$ is contained in $S_d$ as the subgroup of those elements fixing $e_1$ in the standard representation $T_d$. In particular, $\sigma_i$ for $i\neq 1$ generate $S_{d-1}$. Consider the canonical restriction map
		$$
		\varphi: H^1(S_d,U_d)\to H^1(S_{d-1},U_{d}).
		$$
		We want to show that it is injective. Let $f\in H^1(S_d,U_d)$ such that $\varphi(f)=0$. It means that there exists $m\in U_d$ such that $f(\sigma_i)=\sigma_i.m-m$, for $i\neq 1$. Without loss of generality, we can assume that $f(\sigma_i)=0$ for $i\neq 1$. Using again the relation $\sigma_1^2=Id$, we see that
		$$
		f(\sigma_1)=\sum_{j=3}^d a_j(e_{1,j}-e_{2,j}), \quad a_3\ldots,a_d\in\mathbb Z.
		$$
		Since $\sigma_1$ commutes with $\sigma_i$ for $i\neq 2$, by direct computation, we see that the coefficients $a_j$ must be all equal to some integer $c$. So, if we take the element
		$$
		m:= -c\sum_{j=2}^de_{1,j},
		$$
		the associated principal crossed homomorphism $p_m$ satifies $p_m(\sigma_1)=f(\sigma_1)$ and $p_m(\sigma_i)=0$ if $i\neq 1$. So, also $f$ is zero in $H^1(S_d,U_d)$. Then $\varphi$ is injective. In particular, for proving the lemma, it is enough to show that $H^1(S_{d-1},U_d)$ is trivial. As $S_{d-1}$-representation, $U_d$ splits as direct sum of the standard representation $T_{d-1}$ (generated by $e_{1,2},\ldots, e_{1,d}$) and the representation $U_{d-1}$ (generated by $e_{i,j}$, for $2\leq i<j\leq d$). Then its first cohomology vanishes because of $(ii)$ and the inductive assumption.
	\end{proof}
	\end{subsection}
\end{section}

\bibliographystyle{alpha}
\newcommand{\etalchar}[1]{$^{#1}$}

\end{document}